\newcommand{\Holder}{H\"{o}lder\xspace}
\newcommand{\metricspace}{\mathcal{Z}}
\newtheorem{thm}{Theorem}
\newtheorem{cor}[thm]{Corollary}
\newtheorem{lemma}[thm]{Lemma}
\newtheorem{conjecture}[thm]{Conjecture}
\newtheorem{prop}[thm]{Proposition}
\theoremstyle{definition}
\newtheorem{defn}[thm]{Definition}
\theoremstyle{remark}
\newtheorem{rem}[thm]{Remark}
\numberwithin{equation}{section}
\numberwithin{thm}{section}
\newcommand{\cororef}[1]{Corollary~{\ref{cor:#1}}}
\newcommand{\lemref}[1]{Lemma~{\ref{lem:#1}}}
\newcommand{\defref}[1]{Definition~{\ref{def:#1}}}
\newcommand{\secref}[1]{Section~{\ref{sec:#1}}}
\newcommand{\secsref}[1]{Sections~{\ref{sec:#1}}}
\newcommand{\secssref}[1]{{\ref{sec:#1}}}
\newcommand{\thmref}[1]{Theorem~{\ref{thm:#1}}}
\DeclareMathAlphabet{\mathsfsl}{OT1}{cmss}{m}{sl}
\newcommand{\qtext}[1]{\quad\text{#1}\quad}
\newcommand{\term}{\emph}
\renewcommand{\phi}{\varphi}
\newcommand{\II}{\mathbbm{1}}
\newcommand{\defby}{\mathrel{\mathop:}=}
\newcommand{\half}{\tfrac{1}{2}}
\newcommand{\econst}{\mathrm{e}}
\newcommand{\onevct}{\mathbf{e}}
\newcommand{\Id}{\mathbf{I}}
\newcommand{\zeromtx}{\bm{0}}
\newcommand{\R}{\mathbb{R}}
\newcommand{\C}{\mathbb{C}}
\newcommand{\M}{\mathbb{M}}
\newcommand{\Sym}[1]{\mathbb{H}^{#1}}
\newcommand{\abs}[1]{\left\vert {#1} \right\vert}
\newcommand{\real}{\operatorname{Re}}
\newcommand{\diff}[1]{\mathrm{d}{#1}}
\newcommand{\idiff}[1]{\, \diff{#1}}
\newcommand{\ddt}[1]{\frac{\mathrm{d}}{\mathrm{d}{#1}}}
\newcommand{\Prob}[1]{\mathbb{P}\left\{ {#1} \right\}}
\newcommand{\Expect}{\operatorname{\mathbb{E}}}
\newcommand{\Var}{\operatorname{Var}}
\newcommand{\D}{\mathcal{D}}
\newcommand{\condl}{\, \vert \,}
\newcommand{\vct}[1]{\bm{#1}}
\newcommand{\mtx}[1]{\bm{#1}}
\newcommand{\adj}{*}
\newcommand{\trace}{\operatorname{tr}}
\newcommand{\ntr}{\operatorname{\bar{tr}}}
\def\indep{\perp\!\!\!\perp} 
\newcommand{\psdle}{\preccurlyeq}
\newcommand{\psdge}{\succcurlyeq}
\newcommand{\ip}[2]{\left\langle {#1},\ {#2} \right\rangle}
\newcommand{\absip}[2]{\abs{\ip{#1}{#2}}}
\newcommand{\norm}[1]{\left\Vert {#1} \right\Vert}
\newcommand{\normsq}[1]{\norm{#1}^2}
\newcommand{\smnorm}[2]{{\bigl\Vert {#2} \bigr\Vert}_{#1}}
\newcommand{\indnorm}[2]{\norm{#2}_{#1\to#1}} 
\newcommand{\pnorm}[2]{\norm{#2}_{S_{#1}}} 
\newcommand{\Zvec}{Z}
\newcommand{\event}[0]{\mathcal{E}} %
\newcommand{\tv}[0]{d_{\text{TV}}} %
\def\maxeig#1{\lambda_{{\rm max}}\left({#1}\right)}
\newcommand{\OurAbstract}{
This paper derives exponential tail bounds and polynomial moment inequalities
for the spectral norm deviation of a random matrix from its mean.
The argument depends on a matrix extension of Stein's method of exchangeable pairs
for concentration of measure, as introduced by Chatterjee.
Recent work of Mackey et al.~uses these techniques to analyze
random matrices with additive structure, while the enhancements in this
paper cover a wider class of matrix-valued random elements.  In
particular, these ideas lead to a bounded differences inequality
that applies to random matrices constructed from weakly dependent random variables.
The proofs require novel trace inequalities that may be of independent interest.
}
\begin{document}

\else
\begin{document}

\begin{frontmatter}
\title{Deriving Matrix Concentration Inequalities\\ from Kernel Couplings}
\runtitle{Matrix Concentration from Kernel Couplings}

\begin{aug}
\author{\fnms{Daniel} \snm{Paulin}$^1$\ead[label=e1]{paulindani@gmail.com}}
\\
\author{\fnms{Lester} \snm{Mackey}$^2$\ead[label=e2]{lmackey@stanford.edu}}
\and
\author{\fnms{Joel A.} \snm{Tropp}$^3$\ead[label=e3]{jtropp@cms.caltech.edu}}
\runauthor{Paulin, Mackey, and Tropp}

\affiliation{National University of Singapore, Stanford University, \and California Institute of Technology}

\address{$^1$ Department of Mathematics,
National University of Singapore, \printead{e1}}

\address{$^2$ Department of Statistics, Stanford University, \printead{e2}}

\address{$^3$ Department of Computing and Mathematical Sciences, California Institute of Technology, \\\printead{e3}}
\end{aug}

\begin{abstract}
This paper derives exponential tail bounds and polynomial moment inequalities
for the spectral norm deviation of a random matrix from its mean value.
The argument depends on a matrix extension of Stein's method of exchangeable pairs
for concentration of measure, as introduced by Chatterjee.
Recent work of Mackey et al.~uses these techniques to analyze
random matrices with additive structure, while the enhancements in this
paper cover a wider class of matrix-valued random elements.  In
particular, these ideas lead to a bounded differences inequality
that applies to random matrices constructed from weakly dependent random variables.
The proofs require novel trace inequalities that may be of independent interest.
\end{abstract}

\begin{keyword}[class=AMS]
\kwd[Primary ]{60B20} %
\kwd{60E15} %
\kwd[; secondary ]{60G09} %
\kwd{60F10} %
\end{keyword}

\begin{keyword}
\kwd{Concentration inequalities}
\kwd{Stein's method}
\kwd{random matrix}
\kwd{non-commutative}
\kwd{exchangeable pairs}
\kwd{coupling}
\kwd{bounded differences}
\kwd{Dobrushin dependence}
\kwd{Ising model}
\kwd{Haar measure}
\kwd{trace inequality}
\end{keyword}
\end{frontmatter}

\fi

\emph{This paper is based on two independent manuscripts from late 2012 that both used kernel couplings to establish matrix concentration inequalities.  One manuscript is by Paulin; the other is by Mackey and Tropp.  The authors have combined this research into a unified presentation, with equal contributions from both groups.}

\vspace{1pc}

\section{Introduction} \label{sec:intro}

Matrix concentration inequalities provide probabilistic bounds on the
spectral-norm deviation of a random matrix from its mean value.
Over the last decade, a growing field of research has established
that many scalar concentration results have direct analogs for matrices.
For example, see~\cite{AW02:Strong-Converse,
Oli10:Concentration-Adjacency,Tro11:User-Friendly-FOCM}.
This machinery has simplified the study of random matrices that arise in
applications from statistics~\cite{koltchinskii2012neumann},
machine learning~\cite{MorvantKoRa12},
signal processing~\cite{ARR12:Blind-Deconvolution},
numerical analysis~\cite{TroppHadamard},
theoretical computer science~\cite{WX08:Derandomizing-Ahlswede-Winter},
and combinatorics~\cite{Oli11:Spectrum-Random}.

Most of the recent research on matrix concentration depends
on a matrix extension of the Laplace transform method from
elementary probability.
In the matrix setting, it is a serious technical challenge to obtain
bounds on the matrix analog of the moment generating function.
The earlier works~\cite{AW02:Strong-Converse,Oli10:Concentration-Adjacency}
use the Golden--Thompson inequality to accomplish this task.
A more powerful argument~\cite{Tro11:User-Friendly-FOCM} invokes
Lieb's Theorem~\cite[Thm.~6]{Lie73:Convex-Trace} to complete
the estimates.

Very recently, Mackey et al.~\cite{MackeyJoChFaTr12} have shown that
it is also possible to use Stein's method of exchangeable pairs to
control the matrix moment generating function.  This argument depends on a
matrix version of Chatterjee's technique~\cite{Cha07:Steins-Method,
Cha08:Concentration-Inequalities} for establishing concentration
inequalities using exchangeable pairs.  This approach has two chief
advantages.  First, it offers a straightforward way to prove polynomial
moment inequalities for matrices, which are not easy to obtain using
earlier techniques.  Second, exchangeable pair arguments also apply
to random matrices constructed from weakly dependent random variables.

The work~\cite{MackeyJoChFaTr12} focuses on sums of weakly
dependent random matrices because its techniques are less effective
for other examples.  The goal of the current research is to adapt ideas
from Chatterjee's thesis~\cite{Cha08:Concentration-Inequalities}
to establish concentration inequalities for more general
types of random matrices.
In particular, we have obtained new versions of the matrix
bounded difference inequality~(see~\cite[Cor.~7.5]{Tro11:User-Friendly-FOCM}
or~\cite[Cor.~11.1]{MackeyJoChFaTr12}) that hold for a random
matrix that is expressed as a measurable function of weakly
dependent random variables.  These results appear as
Corollary~\ref{cor:bound-diff} and Corollary~\ref{cor:dob-bound-diff}.

\subsection{A First Look at Exchangeable Pairs}

The method of exchangeable pairs depends on the idea that an
exchangeable counterpart of a random variable encodes information
about the symmetries in the distribution.
Here is a simple but fundamental example of an exchangeable pair
of random matrices:
\begin{equation} %
\mtx{X} = \sum\nolimits_{j=1}^n \mtx{Y}_j
\quad\text{and}\quad
\mtx{X}' = \mtx{X} + (\widetilde{\mtx{Y}}_J - \mtx{Y}_J)
\end{equation}
where $\{ \mtx{Y}_j \}$ is an independent family of random Hermitian
matrices, $J$ is a random index chosen uniformly from $\{1, \dots, n\}$,
and $\widetilde{\mtx{Y}}_J$ is an independent copy of $\mtx{Y}_J$.
Notice that
$$
\frac{n}{2} \Expect\big[ (\mtx{X} - \mtx{X}')^2 \big]
	= \Expect\big[ \mtx{X}^2 \big] - [\Expect \mtx{X}]^2
	= \Var(\mtx{X}).
$$
As a consequence, we can interpret the random matrix
$\tfrac{n}{2} (\mtx{X} - \mtx{X}')^2$ as a stochastic
estimate for the variance of the independent sum $\mtx{X}$.
When this random matrix is uniformly small in norm,
we can prove that the sum $\mtx{X}$ concentrates
around its mean value.
We refer to Theorem~\ref{thm:concentration-bdd} or the
result~\cite[Thm.~4.1]{MackeyJoChFaTr12} for a
rigorous statement.

\subsection{Roadmap}

Section~\ref{subsec:NotPre}  continues with some notation and preliminary remarks.
In Section~\ref{sec:exchange}, we describe the concept of a kernel Stein pair of random matrices, which stands at the center of our analysis.  In Section~\ref{sec:concentration-bdd}, we state abstract concentration inequalities for kernel Stein pairs. Afterward, Sections~\ref{sec:matrix-bdd-diff} and~\ref{sec:dobrushin} derive bounded difference inequalities for random matrices constructed from independent and weakly dependent random variables.  As an application, we consider the problem of estimating the correlations in a two-dimensional Ising model in Section~\ref{sec:ising}.  We close with some complementary material in Section~\ref{sec:beyond}.  The proofs of the main results appear in three Appendices.

\subsection{Notation and Preliminaries}\label{subsec:NotPre}

First, we introduce the identity matrix $\Id$ and the zero matrix $\zeromtx$.  Their dimensions are determined by context.

We write $\M^d$ for the algebra of $d \times d$ complex matrices.  The symbol $\norm{ \cdot }$ always refers to the usual operator norm on $\M^d$ induced by the $\ell_2^d$ vector norm.  We also equip $\M^d$ with the trace inner product $\ip{ \mtx{B} }{ \mtx{C} } \defby \trace[\mtx{B}^* \mtx{C}]$ to form a Hilbert space.

Let $\Sym{d}$ denote the subspace of $\M^d$ consisting of $d \times d$ Hermitian matrices.  %
Given an interval $I$ of the real line, we define $\Sym{d}(I)$
to be the family of Hermitian matrices with eigenvalues contained in $I$.  We
use curly inequalities, such as $\psdle$, for the positive semidefinite order
on the Hilbert space $\ell_2^d$ and the Hilbert space $\Sym{d}$.

Let $f : I \to \R$ be a function on an interval $I$ of the real line.  We can lift $f$ to form a \term{standard matrix function} $f : \Sym{d}(I) \to \Sym{d}$.  More precisely, for each matrix $\mtx{A} \in \Sym{d}(I)$, we define the standard matrix function via the rule
$$
f(\mtx{A}) := \sum\nolimits_{k=1}^d f(\lambda_k) \, \vct{u}_k \vct{u}_k^\adj
\quad\text{where}\quad
\mtx{A} = \sum\nolimits_{k=1}^d \lambda_k \, \vct{u}_k \vct{u}_k^\adj
$$
is an eigenvalue decomposition of the Hermitian matrix $\mtx{A}$.  When we apply a familiar scalar function to an Hermitian matrix, we are always referring to the associated standard operator function.  To denote general matrix-valued functions, we use bold uppercase letters, such as $\mtx{F}, \mtx{H}, \mtx{\Psi}$.

For $\mtx{M}\in \M^d$, we write $\real(\mtx{M}):= \half (\mtx{M}+\mtx{M}^*)$ for the Hermitian part of $\mtx{M}$. The following semidefinite relation holds.
\begin{equation} \label{eqn:matrix-am-gm}
\real(\mtx{AB}) = \frac{\mtx{AB} + \mtx{BA}}{2}
\psdle \frac{\mtx{A}^2 + \mtx{B}^2}{2}
\quad\text{for all $\mtx{A}, \mtx{B} \in \Sym{d}$.}
\end{equation}
This result follows when we expand the expression $(\mtx{A} - \mtx{B})^2 \psdge \mtx{0}$.  As a consequence,
\begin{equation} \label{eqn:square-convex}
\left(\frac{\mtx{A} + \mtx{B}}{2}\right)^2 \psdle \frac{\mtx{A}^2 + \mtx{B}^2}{2}
\quad\text{for all $\mtx{A}, \mtx{B} \in \Sym{d}$.}
\end{equation}
In other words, the matrix square is operator convex.

Finally, we need two additional families of matrix norms.  For $p \in [1, \infty]$, the Schatten $p$-norm is given by
$$
\pnorm{p}{\mtx{B}} \defby \big( \trace \abs{\mtx{B}}^p \big)^{1/p}
	\quad\text{for each $\mtx{B} \in \M^d$} ,
$$
where $\abs{\mtx{B}} \defby (\mtx{B}^\adj \mtx{B})^{1/2}$.  For $p \geq 1$, we introduce the matrix norm induced by the $\ell_p^d$ vector norm:
\begin{equation} \label{eqn:induced-norm}
\indnorm{p}{\mtx{B}} \defby \sup_{\vct{x} \neq \vct{0}} \ \frac{\norm{\mtx{B}\vct{x}}_p}{\norm{\vct{x}}_p}
\quad\text{for each $\mtx{B} \in \M^d$}
\end{equation}
In particular, the matrix norm induced by the $\ell_1^d$ vector norm returns the maximum $\ell_1^d$ norm of a column; the norm induced by $\ell_\infty^d$ returns the maximum $\ell_1^d$ norm of a row.

\section{Exchangeable Pairs of Random Matrices} \label{sec:exchange}

The basic principle behind this paper is that
we can exploit the symmetries of the distribution of a random
matrix to obtain matrix concentration inequalities.
One way to encode symmetries is to identify an exchangeable counterpart
of the random matrix.  This section
outlines the main concepts from the method of exchangeable
pairs, including an example of fundamental importance.
Once we have an exchangeable pair, we can apply
ideas of Chatterjee~\cite{Cha08:Concentration-Inequalities}
to obtain concentration inequalities,
which is the subject of Section~\ref{sec:concentration-bdd}.

\subsection{Kernel Stein Pairs} \label{sec:pairs}

In this work, the primal concept is an exchangeable pair of random variables.  

\begin{defn}[Exchangeable Pair] \label{def:exchange}
Let $Z$ and $Z'$ be a pair of random variables taking values in a Polish space $\metricspace$.
We say that a $(Z, Z')$ is an \term{exchangeable pair} when it has the
same distribution as the pair $(Z', Z)$.
\end{defn}

\noindent
In particular, $Z$ and $Z'$ have the same distribution,
and $\Expect f(Z, Z') = \Expect f(Z',Z)$
for every function $f$ where the expectations are finite.

We are interested in a special class of exchangeable pairs of random matrices.
There must be an antisymmetric bivariate kernel that ``reproduces'' the matrices
in the pair.

\begin{defn}[Kernel Stein Pair] \label{def:K-stein-pair}
Let $(Z, Z')$ be an exchangeable pair of random variables taking values in a Polish space $\metricspace$,
and let $\mtx{\Psi} : \metricspace \to \Sym{d}$ be a measurable function. 
Define the random Hermitian matrices
$$
\mtx{X} \defby \mtx{\Psi}(Z)
\quad\text{and}\quad
\mtx{X}' \defby \mtx{\Psi}(Z').
$$
We say that $(\mtx{X}, \mtx{X}')$ is a \term{kernel Stein pair} if there is a bivariate function $\mtx{K} : \metricspace^2 \to \Sym{d}$ for which
\begin{equation} \label{eqn:K-stein-pair}
\mtx{K}(Z,Z') = -\mtx{K}(Z',Z)
~~\text{and}~~
\Expect[ \mtx{K}(Z,Z') \condl Z ] = \mtx{X} 
	\quad\text{almost surely.}
\end{equation}
When discussing a kernel Stein pair $(\mtx{X}, \mtx{X}')$, we always assume that $\Expect \normsq{\mtx{X}} < \infty$.  We sometimes write \term{$\mtx{K}$-Stein pair} to emphasize the specific kernel $\mtx{K}$.
\end{defn}

\noindent
It turns out that most exchangeable pairs of random matrices admit
a kernel $\mtx{K}$ that satisfies~\eqref{eqn:K-stein-pair}.  We
describe the construction in Section~\ref{sec:couplings}.

\paragraph{Kernel Stein Pairs versus Matrix Stein Pairs.}

The analysis in the article~\cite{MackeyJoChFaTr12} is based on an important subclass of kernel Stein pairs termed \term{matrix Stein pairs}.
A matrix Stein pair $(\mtx{X}, \mtx{X}')$ derived from an auxiliary exchangeable pair $(Z,Z')$ satisfies the stronger condition
\begin{equation} \label{eqn:matrix-stein-pair}
\Expect[ \mtx{X} - \mtx{X}' \condl Z ] = \alpha \mtx{X}
\quad\text{for some $\alpha > 0$.}
\end{equation}
That is, a matrix Stein pair is a kernel Stein pair with $\mtx{K}(Z,Z') = \alpha^{-1} (\mtx{X} - \mtx{X}' )$.  Although the paper~\cite{MackeyJoChFaTr12} describes several fundamental classes of matrix Stein pairs, most exchangeable pairs of random matrices do not satisfy the condition~\eqref{eqn:matrix-stein-pair}.  Kernel Stein pairs are much more common, so they are commensurately more useful.

\subsection{Kernel Couplings} \label{sec:couplings}

Given an exchangeable pair of random matrices, we can ask whether it is
possible to equip the pair with a kernel that satisfies~\eqref{eqn:K-stein-pair}.
In fact, there is a very general construction that works whenever the
exchangeable pair is suitably ergodic.  This method depends on
an idea of Chatterjee~\cite[Sec.~4.1]{Cha08:Concentration-Inequalities}
that ultimately relies on an observation of Stein~\cite{Stein86}.

Stein noticed that any exchangeable pair $(Z,Z')$
of $\metricspace$-valued random variables defines a reversible Markov chain
with a symmetric transition kernel $P$ given by
$$
Pf(z) \defby \Expect[ f(Z') \condl Z = z]
$$
for each integrable function $f : \metricspace \to \R$.  In other words,
for any initial value $Z_{(0)} \in \metricspace$, we can construct a
Markov chain
$$
Z_{(0)} \to Z_{(1)} \to Z_{(2)} \to Z_{(3)} \to \cdots
$$
where $\Expect[ f(Z_{(i+1)}) \condl Z_{(i)} ] = Pf(Z_{(i)})$
for each integrable function $f$.  This requirement suffices
to determine the distribution of each $Z_{(i+1)}$.

When the chain $(Z_{(i)})_{i \geq 0}$ is ergodic enough,
we can explicitly construct a kernel that
satisfies~\eqref{eqn:K-stein-pair} for any exchangeable
pair of random matrices constructed from the auxiliary
exchangeable pair $(Z,Z')$.  To explain this idea, we
begin with a definition.

\begin{defn}[Kernel Coupling] \label{def:kernel-coupling}
Let $(Z,Z') \in \metricspace^2$ be an exchangeable pair. 
Let $(Z_{(i)})_{i\geq 0}$ and $(Z'_{(i)})_{i\geq 0}$
be two Markov chains with arbitrary initial values, each
evolving according to the transition kernel $P$ induced
by $(Z,Z')$.
We call $(Z_{(i)},Z'_{(i)})_{i\geq 0}$ a \emph{kernel coupling} for $(Z,Z')$ if, 
\begin{align} \label{eqn:kernel-coupling}
Z_{(i)} \indep Z_{(0)}' \condl Z_{(0)}\quad\text{and}\quad Z'_{(i)} \indep Z_{(0)} \condl Z'_{(0)} \quad \text{for all $i$.}
\end{align}
The expression $U \indep V \condl W$ means that $U$ and $V$ are independent conditional on $W$.
\end{defn}

The key lemma, essentially due to Chatterjee~\cite[Sec.~4.1]{Cha08:Concentration-Inequalities}, allows us to construct a kernel Stein pair by way of a kernel coupling.

\begin{lemma} \label{lem:kernel-coupling}
	Let $(Z_{(i)},Z'_{(i)})_{i\geq 0}$ be a kernel coupling for an exchangeable pair $(Z,Z') \in \metricspace^2$.
	Let $\mtx{\Psi} : \metricspace \to \Sym{d}$ be a measurable function with $\Expect\mtx{\Psi}(Z) = \zeromtx.$
	Suppose that there is a positive constant $L$ for which
	\begin{equation}\label{eqn:coupling-premise}
	\sum\nolimits_{i=0}^\infty \norm{ \Expect[\mtx{\Psi}(Z_{(i)}) - \mtx{\Psi}(Z'_{(i)})\condl Z_{(0)}=z,Z'_{(0)}=z']} \leq L \quad\text{for all $z, z' \in \metricspace$}.
	\end{equation}
	Then $(\mtx{\Psi}(Z), \mtx{\Psi}(Z'))$ is a kernel Stein pair
	with kernel 
	\begin{align} \label{eqn:K-construct}
		\mtx{K}(Z,Z') \defby \sum\nolimits_{i=0}^\infty \Expect[\mtx{\Psi}(Z_{(i)}) - \mtx{\Psi}(Z'_{(i)})\condl Z_{(0)}=Z,Z'_{(0)}=Z'].
	\end{align}
\end{lemma}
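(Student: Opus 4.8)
The plan is to verify the three defining properties of a kernel Stein pair from Definition~\ref{def:K-stein-pair}: that the series~\eqref{eqn:K-construct} converges to a well-defined antisymmetric kernel $\mtx{K}$, that it reproduces $\mtx{X}$ under conditioning on $Z$, and that $\Expect\normsq{\mtx{X}} < \infty$. Exchangeability of the pair $(\mtx{\Psi}(Z), \mtx{\Psi}(Z'))$ needs no argument, since it is a measurable image of the exchangeable pair $(Z,Z')$. The first move is to rewrite the summand in~\eqref{eqn:K-construct} in a form that decouples the two chains. Writing $(P^i\mtx{\Psi})(z) \defby \Expect[\mtx{\Psi}(Z_{(i)}) \condl Z_{(0)} = z]$ for the matrix-valued action of the $i$-step transition kernel, the two conditional-independence clauses in~\eqref{eqn:kernel-coupling} together with the Markov property give
\[
\Expect\big[\mtx{\Psi}(Z_{(i)}) - \mtx{\Psi}(Z'_{(i)}) \condl Z_{(0)} = z,\ Z'_{(0)} = z'\big] = (P^i\mtx{\Psi})(z) - (P^i\mtx{\Psi})(z').
\]
Hence $\mtx{K}(Z,Z') = \sum_{i \geq 0} \big[ (P^i\mtx{\Psi})(Z) - (P^i\mtx{\Psi})(Z') \big]$, and the premise~\eqref{eqn:coupling-premise} is exactly the statement that this series is absolutely summable with norm at most $L$ for every pair of starting points; it therefore converges in the finite-dimensional space $\Sym{d}$ with $\norm{\mtx{K}(Z,Z')} \leq L$ almost surely. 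Antisymmetry is then immediate, since swapping $z$ and $z'$ negates each summand.

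The crux is the reproducing identity $\Expect[\mtx{K}(Z,Z') \condl Z] = \mtx{X}$. The transition kernel $P$ satisfies $\Expect[h(Z') \condl Z] = (Ph)(Z)$, and, because $Z$ and $Z'$ are identically distributed, $P$ preserves their common law; in particular $\Expect[(P^i\mtx{\Psi})(Z)] = \Expect \mtx{\Psi}(Z) = \zeromtx$ for every $i$. Applying conditional dominated convergence to the partial sums (which are bounded by $L$) and using $\Expect[(P^i\mtx{\Psi})(Z') \condl Z] = (P^{i+1}\mtx{\Psi})(Z)$, I would obtain the telescoping identity
\[
\Expect[\mtx{K}(Z,Z') \condl Z] = \sum_{i \geq 0}\big[ (P^i\mtx{\Psi})(Z) - (P^{i+1}\mtx{\Psi})(Z) \big] = \mtx{\Psi}(Z) - \lim_{N\to\infty}(P^{N+1}\mtx{\Psi})(Z).
\]
The genuinely substantive step, and the one I expect to be the main obstacle, is showing that the tail $(P^N\mtx{\Psi})(z)$ vanishes as $N \to \infty$ for every $z$. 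Here the premise~\eqref{eqn:coupling-premise} says that $\sum_{i} \norm{(P^i\mtx{\Psi})(z) - (P^i\mtx{\Psi})(z_0)} \leq L$ for any fixed reference point $z_0$, so each difference $(P^i\mtx{\Psi})(z) - (P^i\mtx{\Psi})(z_0)$ tends to $\zeromtx$ while remaining bounded by $L$; bounded convergence then forces
\[
\norm{(P^i\mtx{\Psi})(z_0)} = \norm{\Expect\big[(P^i\mtx{\Psi})(z_0) - (P^i\mtx{\Psi})(Z)\big]} \leq \Expect\norm{(P^i\mtx{\Psi})(Z) - (P^i\mtx{\Psi})(z_0)} \longrightarrow 0,
\]
and hence $(P^i\mtx{\Psi})(z) \to \zeromtx$ for every $z$. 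This makes the limit above vanish and yields $\Expect[\mtx{K}(Z,Z') \condl Z] = \mtx{\Psi}(Z) = \mtx{X}$.

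The remaining moment condition then comes for free: from $\norm{\mtx{K}(Z,Z')} \leq L$ almost surely and the reproducing identity, $\norm{\mtx{X}} = \norm{\Expect[\mtx{K}(Z,Z')\condl Z]} \leq L$, so $\Expect\normsq{\mtx{X}} \leq L^2 < \infty$. In summary, the only place where any real care is required is the interplay between the summability hypothesis and the stationarity of the common law of $Z$ and $Z'$ in the vanishing-tail step; everything else is bookkeeping with the Markov and conditional-independence structure built into the kernel coupling.
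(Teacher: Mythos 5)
Your proposal is correct and is essentially the argument the paper has in mind: the paper omits the proof, deferring to Chatterjee's scalar Lemma 4.2, whose proof is exactly your route — rewrite the summands as $(P^i\mtx{\Psi})(z) - (P^i\mtx{\Psi})(z')$ via the conditional independence in the coupling, telescope under $\Expect[\,\cdot \condl Z]$, and kill the tail $(P^N\mtx{\Psi})$ using stationarity of the law of $Z$ together with the uniform summability bound. Your observation that $\norm{\mtx{K}} \leq L$ also yields $\Expect\normsq{\mtx{X}} < \infty$ is a correct (and tidy) way to dispatch the moment condition.
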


\noindent
The proof of this result is identical with that of \cite[Lem.~4.2]{Cha08:Concentration-Inequalities}, which establishes the same formula~\eqref{eqn:K-construct} in the scalar setting.  Lemma~\ref{lem:kernel-coupling} indicates that the kernel $\mtx{K}$ associated with an exchangeable pair $(Z, Z')$ and a map $\mtx{\Psi}$ tends to be small when
the two Markov chains in the kernel coupling have a small coupling time.

\subsection{Conditional Variance}

To each kernel Stein pair $(\mtx{X}, \mtx{X}')$, we may associate two random matrices called the \term{conditional variance}  and \term{kernel conditional variance} of $\mtx{X}$.  Ultimately, we show that $\mtx{X}$ is concentrated around the zero matrix whenever the conditional variance and the kernel conditional variance are both small.

\begin{defn}[Conditional Variance] \label{def:conditional-variance}
Suppose that $(\mtx{X}, \mtx{X}')$ is a $\mtx{K}$-Stein pair, 
constructed from an auxiliary exchangeable pair $(Z, Z')$.
The \term{conditional variance} is the random matrix
\begin{align} \label{eqn:conditional-variance}
\mtx{V}_{\mtx{X}}
	\defby \mtx{V}_{\mtx{X}}(Z)
	\defby  \frac{1}{2} \Expect \big[(\mtx{X} - \mtx{X}')^2\condl Z \big],
\end{align}
and the \term{kernel conditional variance} is the random matrix
\begin{align} \label{eqn:K-conditional-variance}
\mtx{V}^{\mtx{K}}
	\defby \mtx{V}^{\mtx{K}}(Z)
	\defby  \frac{1}{2} \Expect \big[\mtx{K}(Z,Z')^2\condl Z \big].
\end{align}
\end{defn}

The following lemma provides a convenient way to control the conditional variance and the kernel conditional variance when the kernel is obtained from a kernel coupling as in Lemma~\ref{lem:kernel-coupling}.

\begin{lemma} \label{lem:conditional-variance-bound}
Let $(Z_{(i)},Z'_{(i)})_{i\geq 0}$ be a kernel coupling for an exchangeable pair $(Z,Z') \in \metricspace^2$, and let $\mtx{\Psi} : \metricspace \to \Sym{d}$ be a measurable map.  Suppose that $(\mtx{X}, \mtx{X}') = (\mtx{\Psi}(Z), \mtx{\Psi}(Z'))$ is a kernel Stein pair where the kernel $\mtx{K}$ is constructed via \eqref{eqn:K-construct}.  
For each $i = 0, 1, 2, \dots$, assume that
\begin{align} \label{eqn:K-term-bound}
\Expect\big[\Expect[\mtx{\Psi}(Z_{(i)}) - \mtx{\Psi}(Z'_{(i)})\condl Z,Z']^2\condl Z \big] 
	\psdle s_i^2 \, \mtx{\Gamma}(Z)\quad\text{almost surely},
\end{align}
where $\mtx{\Gamma} : \metricspace \to \Sym{d}$ is a measurable map and $(s_i)_{i\geq 0}$ is a deterministic sequence of nonnegative numbers.
Then the conditional variance \eqref{eqn:conditional-variance} satisfies
\begin{align*} 
\mtx{V}_{\mtx{X}} 
	\psdle \frac{1}{2} s_0^2 \, \mtx{\Gamma}(Z)\quad\text{almost surely},
\end{align*}
and the kernel conditional variance \eqref{eqn:K-conditional-variance} satisfies
\begin{align*} 
\mtx{V}^{\mtx{K}} 
	\psdle \frac{1}{2}\left(\sum\nolimits_{i=0}^\infty s_i\right)^2 \mtx{\Gamma}(Z)\quad\text{almost surely}.
\end{align*}
\end{lemma}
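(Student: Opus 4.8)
The plan is to establish the two bounds separately. The bound on the conditional variance is immediate from the $i=0$ case of the hypothesis~\eqref{eqn:K-term-bound}, while the bound on the kernel conditional variance follows by applying operator convexity of the matrix square to the series~\eqref{eqn:K-construct} that defines $\mtx{K}$, and then moving a conditional expectation through the sum.

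For the conditional variance, recall that in~\eqref{eqn:K-construct} and~\eqref{eqn:K-term-bound} the two chains of the kernel coupling are taken with initial values $Z_{(0)} = Z$ and $Z'_{(0)} = Z'$. Hence $\mtx{\Psi}(Z_{(0)}) - \mtx{\Psi}(Z'_{(0)}) = \mtx{X} - \mtx{X}'$ is already a measurable function of $(Z,Z')$, so $\Expect[\mtx{\Psi}(Z_{(0)}) - \mtx{\Psi}(Z'_{(0)}) \condl Z, Z'] = \mtx{X} - \mtx{X}'$. The $i = 0$ instance of~\eqref{eqn:K-term-bound} therefore reads $\Expect[(\mtx{X} - \mtx{X}')^2 \condl Z] \psdle s_0^2 \, \mtx{\Gamma}(Z)$ almost surely, and dividing by two and recalling~\eqref{eqn:conditional-variance} yields $\mtx{V}_{\mtx{X}} \psdle \tfrac{1}{2} s_0^2 \, \mtx{\Gamma}(Z)$.

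For the kernel conditional variance, abbreviate $\mtx{D}_i \defby \Expect[\mtx{\Psi}(Z_{(i)}) - \mtx{\Psi}(Z'_{(i)}) \condl Z, Z']$, so that $\mtx{K}(Z,Z') = \sum_{i \geq 0} \mtx{D}_i$ by~\eqref{eqn:K-construct} and~\eqref{eqn:K-term-bound} reads $\Expect[\mtx{D}_i^2 \condl Z] \psdle s_i^2 \, \mtx{\Gamma}(Z)$. We may assume $S \defby \sum_{i\geq 0} s_i < \infty$, since the claim is otherwise vacuous, and that every $s_i > 0$, because $s_i = 0$ forces $\Expect[\trace \mtx{D}_i^2 \condl Z] \leq 0$ and hence $\mtx{D}_i = \zeromtx$ almost surely, so any such term may be dropped; in particular $\mtx{\Gamma}(Z) \psdge \zeromtx$ almost surely. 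Now apply the operator convexity of the matrix square~\eqref{eqn:square-convex} in its weighted, countable form: with weights $\mu_i \defby s_i / S$ summing to one and matrices $\mtx{A}_i \defby \mu_i^{-1}\mtx{D}_i$,
\[
\mtx{K}(Z,Z')^2 = \Big( \sum\nolimits_i \mu_i \mtx{A}_i \Big)^2 \psdle \sum\nolimits_i \mu_i \mtx{A}_i^2 = S \sum\nolimits_i \tfrac{1}{s_i} \, \mtx{D}_i^2 .
\]
Taking $\Expect[\,\cdot \condl Z]$ and invoking~\eqref{eqn:K-term-bound} term by term,
\[
\Expect[ \mtx{K}(Z,Z')^2 \condl Z] \psdle S \sum\nolimits_i \tfrac{1}{s_i} \Expect[\mtx{D}_i^2 \condl Z] \psdle S \sum\nolimits_i s_i \, \mtx{\Gamma}(Z) = S^2 \, \mtx{\Gamma}(Z),
\]
and dividing by two and recalling~\eqref{eqn:K-conditional-variance} gives $\mtx{V}^{\mtx{K}} \psdle \tfrac{1}{2}\big(\sum_i s_i\big)^2 \mtx{\Gamma}(Z)$.

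The one point that deserves care — and the main obstacle — is justifying the countable operator Jensen step and the interchange of the conditional expectation with the infinite sum. I would handle this via finite truncations $\mtx{K}_N \defby \sum_{i=0}^N \mtx{D}_i$: applying~\eqref{eqn:square-convex} inductively gives $\mtx{K}_N^2 \psdle S_N \sum_{i=0}^N s_i^{-1} \mtx{D}_i^2$ with $S_N \defby \sum_{i=0}^N s_i \leq S$, whence $\Expect[\mtx{K}_N^2 \condl Z] \psdle S_N^2 \, \mtx{\Gamma}(Z) \psdle S^2 \, \mtx{\Gamma}(Z)$. Then let $N \to \infty$: the partial sums are dominated by $G \defby \sum_{i\geq 0} \norm{\mtx{D}_i}$, and the scalar weighted Cauchy--Schwarz estimate together with~\eqref{eqn:K-term-bound} shows $\Expect[G^2 \condl Z] \leq S^2 \trace \mtx{\Gamma}(Z) < \infty$, so $\mtx{K}_N \to \mtx{K}(Z,Z')$ almost surely with $\mtx{K}_N^2$ suitably dominated; dominated convergence yields $\Expect[\mtx{K}_N^2 \condl Z] \to \Expect[\mtx{K}(Z,Z')^2 \condl Z]$, and the conclusion follows since the positive semidefinite cone is closed. (Alternatively, the absolute-summability premise~\eqref{eqn:coupling-premise} behind Lemma~\ref{lem:kernel-coupling} already supplies the uniform bound $\norm{\mtx{K}_N} \leq L$.) Everything else is the one-line operator-convexity computation above.
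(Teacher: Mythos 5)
Your proposal is correct and follows essentially the same route as the paper: the paper expands $\mtx{K}(Z,Z')^2$ as a double sum over the terms $\mtx{Y}_i = \Expect[\mtx{\Psi}(Z_{(i)}) - \mtx{\Psi}(Z'_{(i)})\condl Z,Z']$ and applies the matrix AM--GM bound~\eqref{eqn:matrix-am-gm} with the weights $s_j/s_i$, which is exactly your weighted operator-Jensen (Cauchy--Schwarz) step in expanded form, and it treats $\mtx{V}_{\mtx{X}}$ via the $i=0$ term just as you do. Your truncation-and-dominated-convergence justification of the interchange of the conditional expectation with the infinite sum is a point the paper passes over silently (it only remarks that one may take each $s_i>0$ by continuity), so that extra care is welcome but does not change the argument.
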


\begin{proof}
Using a continuity argument, we may assume that each $s_i > 0$ for each integer $i \geq 0$.  For each $i$, define $\mtx{Y}_i \defby \Expect[\mtx{\Psi}(Z_{(i)}) - \mtx{\Psi}(Z'_{(i)})\condl Z,Z']$.
By the kernel coupling construction \eqref{eqn:K-construct}, we have
\begin{align*}
\mtx{V}^{\mtx{K}} 
	&= \frac{1}{2}\sum\nolimits_{i=0}^\infty \sum\nolimits_{j=0}^\infty 
	\Expect[\mtx{Y}_i\mtx{Y}_j \condl Z] 
	= \frac{1}{2}\sum\nolimits_{i=0}^\infty \sum\nolimits_{j=0}^\infty 
	\Expect[\real(\mtx{Y}_i\mtx{Y}_j) \condl Z]  \\
	&\psdle \frac{1}{2}\sum\nolimits_{i=0}^\infty \sum\nolimits_{j=0}^\infty 
	\frac{1}{2}\left(\frac{s_j}{s_i}\Expect[\mtx{Y}_i^2\condl Z]+ \frac{s_i}{s_j}\Expect[\mtx{Y}_j^2\condl Z]\right) \\
	&\psdle \frac{1}{2}\sum\nolimits_{i=0}^\infty \sum\nolimits_{j=0}^\infty 
	\frac{1}{2}\left(\frac{s_j}{s_i}s_i^2 \, \mtx{\Gamma}(Z)+ \frac{s_i}{s_j}s_j^2 \,\mtx{\Gamma}(Z)\right) \\
	&=\frac{1}{2} \left( \sum\nolimits_{i=0}^\infty s_i \sum\nolimits_{j=0}^\infty s_j \right) \mtx{\Gamma}(Z)  
	= \frac{1}{2}\left(\sum\nolimits_{i=0}^\infty s_i\right)^2 \mtx{\Gamma}(Z) ,
\end{align*}
where the first semidefinite inequality follows from~\eqref{eqn:matrix-am-gm} and the second inequality depends on the hypothesis \eqref{eqn:K-term-bound}.
Similarly,
\begin{align*}
\mtx{V}_{\mtx{X}} 
	= \frac{1}{2}
	\Expect[\mtx{Y}_0^2 \condl Z] 
	\psdle\frac{1}{2} s_0^2 \,  \mtx{\Gamma}(Z).
\end{align*}
This observation completes the proof.
\end{proof}

\subsection{Example: Matrix Functions of Independent Variables} \label{sec:bounded-diffs}

To illustrate the definitions in this section, we describe a simple but important example of a kernel Stein pair.
Suppose that $\Zvec \defby (Z_1, \dots, Z_n)$ is a vector of independent random variables taking values in a Polish space $\metricspace$.
Let $\mtx{H} : \metricspace \to \Sym{d}$ be a measurable function, and let $(\mtx{A}_j)_{j\geq 1}$ be a sequence of deterministic Hermitian matrices satisfying
\begin{equation} \label{eqn:bdd-diff-hyp}
(\mtx{H}(z_1,\dots, z_n) - \mtx{H}(z_1,\dots,z_j',\dots,z_n))^2 \psdle \mtx{A}_j^2
\end{equation}
where $z_j, z_j'$ range over the possible values of $Z_j$ for each $j$.
We aim to analyze the random matrix %
\begin{equation} \label{eqn:bdd-diff-mtx}
\mtx{X} \defby \mtx{H}(\Zvec) - \Expect \mtx{H}(\Zvec).
\end{equation}
We encounter matrices of this form in a variety of applications.  For instance, concentration inequalities for the norm of $\mtx{X}$ have immediate implications for the generalization properties of algorithms for multiclass classification~\cite{MachartRa12,MorvantKoRa12}.

In this section, we explain how to construct a kernel exchangeable pair for studying the random matrix~\eqref{eqn:bdd-diff-mtx}, and we compute the conditional variance and kernel conditional variance.  Later, in Section~\ref{sec:matrix-bdd-diff}, we use these calculations to establish a matrix bounded difference inequality that improves on~\cite[Cor~7.5]{Tro11:User-Friendly-FOCM}.

To begin, we form an exchangeable counterpart for $\Zvec$:
\begin{equation*} %
\Zvec' \defby (Z_1, \dots, Z_{J-1}, \tilde{Z}_{J}, Z_{J+1}, \dots, Z_n)
\end{equation*}
where $\tilde{\Zvec} := (\tilde{Z}_1,\dots, \tilde{Z}_n)$ is an independent copy of $\Zvec$.  We draw the coordinate $J$ uniformly at random from $\{1, \dots, n\}$, independent from everything else.  Then the random matrix
$$
\mtx{X}' \defby \mtx{H}(\Zvec') - \Expect \mtx{H}(\Zvec)
$$
is an exchangeable counterpart for the matrix $\mtx{X}$. 

To verify that $(\mtx{X}, \mtx{X}')$ is a kernel Stein pair for a suitable kernel $\mtx{K}$, we establish an explicit kernel coupling
$(\Zvec_{(i)}, \Zvec_{(i)}')_{i\geq 0}$.
For each $i \geq 1$, define $\tilde{\Zvec}_{(i)}$ to be an independent copy of $\Zvec$. We generate the pair $(\Zvec_{(i)},\Zvec'_{(i)})$ from the previous pair $(\Zvec_{(i-1)},\Zvec'_{(i-1)})$ by selecting an independent random index $J_i$ uniformly from $\{1,\dots,n\}$ and replacing the $J_i$-th coordinates of both $\Zvec_{(i-1)}$ and $\Zvec'_{(i-1)}$ with the $J_i$-th coordinate of $\tilde{\Zvec}_{(i)}$.
By construction, the two marginal chains $(\Zvec_{(i)})_{i\geq 0}$ and $(\Zvec'_{(i)})_{i\geq 0}$ evolve according to the transition kernel induced by $(\Zvec,\Zvec')$, and they satisfy the kernel coupling property \eqref{eqn:kernel-coupling}.
The analysis of the coupon collector's problem~\cite[Sec.~2.2]{LPW10:Markov-Chains} shows that the expected coupling time for this pair of Markov chains is bounded by $n (1 + \log n)$.  Therefore, Lemma~\ref{lem:kernel-coupling} implies that $(\mtx{X}, \mtx{X}')$ is a kernel Stein pair with 
$$
\mtx{K}(\Zvec,\Zvec') \defby \sum\nolimits_{i=0}^\infty \Expect[\mtx{H}(\Zvec_{(i)}) - \mtx{H}(\Zvec'_{(i)})\condl \Zvec_{(0)}=\Zvec,\Zvec'_{(0)}=\Zvec'].
$$
Since the two Markov chains couple rapidly, we expect that the kernel is small.

To bound the size of the kernel, we use Lemma~\ref{lem:conditional-variance-bound}.
For each integer $i \geq 0$, define the event $\event_i \defby \{J\notin \{J_1,\ldots,J_i\}\}$.
Off of the event $\event_i$, we have $\mtx{H}(\Zvec_{(i)}) = \mtx{H}(\Zvec'_{(i)})$; on the event $\event_i$, the random vectors $\Zvec_{(i)}$ and $\Zvec'_{(i)}$ can differ only in the $J$-th coordinate.  Therefore,
\begin{align*}
\Expect\big[\Expect[\mtx{H}(\Zvec_{(i)}) &- \mtx{H}(\Zvec'_{(i)})\condl \Zvec,\Zvec']^2\condl \Zvec \big] \\
&= \Expect\big[(\Prob{\event_{i}}\cdot \Expect[\mtx{H}(\Zvec_{(i)}) - \mtx{H}(\Zvec'_{(i)})\condl \Zvec,\Zvec',\event_{i}])^2\condl \Zvec \big] \\
&\psdle (1-1/n)^{2i} \cdot \Expect[(\mtx{H}(\Zvec_{(i)}) - \mtx{H}(\Zvec'_{(i)}))^2\condl \Zvec,\event_{i}] \\
&\psdle (1-1/n)^{2i} \cdot \Expect[ \mtx{A}_J^2].
\end{align*}
The first semidefinite inequality follows from the convexity~\eqref{eqn:square-convex} of the matrix square, and the second depends on our bounded differences assumption~\eqref{eqn:bdd-diff-hyp}.  Apply Lemma~\ref{lem:conditional-variance-bound} with $s_i = (1-1/n)^{i}$ and $\mtx{\Gamma}(\Zvec) = \Expect[ \mtx{A}_J^2]$ to conclude that
\begin{equation} \label{eqn:bdd-diff-DeltaXK}
\mtx{V}^{\mtx{K}} 
	\psdle  \frac{1}{2}\Expect[ \mtx{A}_J^2] \left(\sum\nolimits_{i=0}^\infty  (1-1/n)^{i}\right)^2
	= \frac{n^2}{2}\Expect[ \mtx{A}_J^2] = \frac{n}{2}\sum\nolimits_{j=1}^n \mtx{A}_j^2
\end{equation}
and that
\begin{align} \label{eqn:bdd-diff-DeltaX} 
\mtx{V}_{\mtx{X}} 
	&\psdle \frac{1}{2}\Expect[ \mtx{A}_J^2] = \frac{1}{2n}\sum\nolimits_{j=1}^n \mtx{A}_j^2.
\end{align}
We discover that the conditional variance and the kernel conditional variance are under control when $\mtx{H}$ has bounded coordinate differences.  Section~\ref{sec:matrix-bdd-diff} discusses how these estimates imply that the matrix $\mtx{X}$ concentrates well.

\section{Concentration Inequalities for Random Matrices} \label{sec:concentration-bdd}

This section contains our main results on concentration for random matrices.  Given a kernel Stein pair, we explain how the conditional variance and kernel conditional variance allow us to obtain exponential tail bounds and polynomial moment inequalities.

At a high level, our work suggests the following plan of action.  You begin with a random matrix, $\mtx{X}$.  You use the symmetries of the random matrix to construct an exchangeable counterpart, $\mtx{X}'$, that is close but not identical to $\mtx{X}$. 
You construct a kernel coupling from this exchangeable pair, and you compute the conditional variances, $\mtx{V}_{\mtx{X}}$ and $\mtx{V}_{\mtx{K}}$.  Then you apply the concentration results from this section to control the deviation of $\mtx{X}$ from its mean.  In the sections to come, we provide specific examples and applications of this template.

\subsection{Exponential Tail Bounds}

Our first result establishes exponential concentration for the maximum and minimum eigenvalues of a random matrix.

\begin{thm}[Concentration for Bounded Random Matrices] \label{thm:concentration-bdd}
Consider a $\mtx{K}$-Stein pair \\$(\mtx{X}, \mtx{X}') \in \Sym{d} \times \Sym{d}$.
Suppose there exist nonnegative constants $c, v, s$ for which the conditional variance~\eqref{eqn:conditional-variance} and the kernel conditional variance~\eqref{eqn:K-conditional-variance} of the pair satisfy
\begin{equation} \label{eqn:comparison}
\mtx{V}_{\mtx{X}} \psdle s^{-1} \cdot (c \mtx{X} + v \, \Id)  
\quad\text{and}\quad
\mtx{V}^{\mtx{K}} \psdle s \cdot (c \mtx{X} + v \, \Id)
\quad \text{almost surely}.
\end{equation}
Then, for all $t \geq 0$,
\begin{align*}
\Prob{ \lambda_{\min}( \mtx{X} ) \leq -t }
	&\leq d \cdot \exp\left\{ \frac{-t^2}{2v} \right\} \\
\Prob{ \lambda_{\max}( \mtx{X} ) \geq t }
	&\leq d \cdot \exp\left\{ -\frac{t}{c} + \frac{v}{c^2} \log\left(1 + \frac{ct}{v} \right) \right\} \\
	&\leq d \cdot \exp\left\{ \frac{-t^2}{2v + 2ct} \right\}.
\end{align*}
Furthermore,
\begin{align*}
\Expect \lambda_{\min}(\mtx{X})
	&\geq -\sqrt{2v\log d}\\
\Expect \lambda_{\max}(\mtx{X})
	&\leq \phantom{-} \sqrt{2v\log d} + c \log d.
\end{align*}
\end{thm}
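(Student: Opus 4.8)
The plan is to follow the matrix Laplace transform method, which is the standard engine behind results of this type, and combine it with the exchangeable-pair machinery.  The key is to establish a differential inequality for the trace moment generating function $m(\theta) \defby \Expect \trace \econst^{\theta \mtx{X}}$.  For the upper tail, I would start from the identity $\theta \, m(\theta) = \theta \Expect\trace[\mtx{X}\econst^{\theta\mtx{X}}]$ and use the kernel reproducing property $\Expect[\mtx{K}(Z,Z')\condl Z]=\mtx{X}$ together with antisymmetry of $\mtx{K}$ to symmetrize: writing $f(Z,Z') \defby \trace[\mtx{K}(Z,Z')(\econst^{\theta\mtx{X}}-\econst^{\theta\mtx{X}'})]$, one gets $\Expect f(Z,Z') = 2\Expect\trace[\mtx{X}\econst^{\theta\mtx{X}}]$ by exchangeability, since $f$ is antisymmetric under swapping $Z\leftrightarrow Z'$ and $\Expect\trace[\mtx{K}(Z,Z')\econst^{\theta\mtx{X}}] = \Expect\trace[\mtx{X}\econst^{\theta\mtx{X}}]$.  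This is exactly Chatterjee's symmetrization trick, lifted to matrices via the trace.

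Next I would bound $\trace[\mtx{K}(Z,Z')(\econst^{\theta\mtx{X}}-\econst^{\theta\mtx{X}'})]$ pointwise.  The mean value trick for matrix exponentials — the integral representation $\econst^{\theta\mtx{X}} - \econst^{\theta\mtx{X}'} = \theta\int_0^1 \econst^{u\theta\mtx{X}}(\mtx{X}-\mtx{X}')\econst^{(1-u)\theta\mtx{X}'}\idiff{u}$ — lets me write the quantity in terms of $\mtx{X}-\mtx{X}'$ and $\mtx{K}$.  Here is where the novel trace inequalities advertised in the abstract must enter: I expect one needs a bound of the form $\trace[\mtx{K}(Z,Z')(\econst^{\theta\mtx{X}}-\econst^{\theta\mtx{X}'})] \leq \tfrac{\theta}{2}\trace\big[(s\,\mtx{K}^2 + s^{-1}(\mtx{X}-\mtx{X}')^2)\,g(\theta\mtx{X})\big]$ for an appropriate convex weight $g$ (plausibly $g=\exp$ or something close), splitting the cross term by a matrix AM–GM step analogous to~\eqref{eqn:matrix-am-gm} but in the presence of the exponential weight.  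Taking conditional expectation given $Z$ turns the two quadratic pieces into $\mtx{V}^{\mtx{K}}$ and $\mtx{V}_{\mtx{X}}$ (up to the factor of $2$ in their definitions), and then the comparison hypothesis~\eqref{eqn:comparison} collapses both into $c\mtx{X}+v\Id$.  The upshot should be a differential inequality $\theta\, m(\theta) \leq \theta(c\theta)\cdot \tfrac{\diff{}}{\diff\theta}\big[\text{something}\big] + \theta^2 v\, m(\theta)$, more precisely of the Bennett type $m'(\theta)/m(\theta) - (\log m)'(0) \leq$ (a function giving the stated log term), whose integration yields the $-t/c + (v/c^2)\log(1+ct/v)$ bound after a Markov/Chernoff step $\Prob{\lambda_{\max}(\mtx{X})\geq t} \leq \inf_{\theta>0}\econst^{-\theta t} m(\theta)$ and the usual dimensional factor $\trace\econst^{\theta\mtx{X}} \leq d\,\econst^{\theta\lambda_{\max}(\mtx{X})}$.

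For the lower tail, I would run the same symmetrization with $\theta < 0$; since $c\mtx{X}+v\Id$ need not be positive definite, the clean way is to drop the $c\mtx{X}$ contribution — it contributes a term proportional to $\trace[\mtx{X}g(\theta\mtx{X})\,h(\theta)]$ which, for the minimum eigenvalue and the right sign of $\theta$, can be controlled or absorbed — leaving the pure sub-Gaussian estimate $m'(\theta) \leq \theta v\, m(\theta)$, hence $m(\theta) \leq d\,\econst^{v\theta^2/2}$, which gives $\Prob{\lambda_{\min}(\mtx{X})\leq -t} \leq d\,\econst^{-t^2/2v}$.  The expectation bounds follow by a routine argument: from $\Expect\econst^{\theta\lambda_{\max}(\mtx{X})} \leq m(\theta) \leq d\,\econst^{v\theta^2/2 + \dots}$, Jensen gives $\Expect\lambda_{\max}(\mtx{X}) \leq \theta^{-1}\log d + v\theta/2 + c(\dots)$, and optimizing $\theta = \sqrt{2\log d/v}$ (after checking the log term contributes the extra $c\log d$) produces $\sqrt{2v\log d} + c\log d$; the lower bound is the symmetric statement with $c=0$.

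The main obstacle I anticipate is the pointwise trace inequality in the second step: controlling $\trace[\mtx{K}(\econst^{\theta\mtx{X}}-\econst^{\theta\mtx{X}'})]$ by the conditional variances with the \emph{correct} constants and weight function, in a way that is tight enough to produce the Bennett-type log term rather than a lossy Bernstein bound, and that handles the fact that $\mtx{K}$, $\mtx{X}$, and $\mtx{X}'$ need not commute.  This is presumably where the ``novel trace inequalities'' of the abstract do the real work, and I would expect the proof to isolate that inequality as a separate lemma proved in one of the appendices, with the rest of the argument being the relatively mechanical Laplace-transform bookkeeping sketched above.
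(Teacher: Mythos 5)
Your proposal tracks the paper's actual argument essentially step for step: the matrix Laplace transform method, the exchangeable-pair identity $\Expect[\mtx{X}\mtx{F}(\mtx{X})] = \tfrac12\Expect[\mtx{K}(Z,Z')(\mtx{F}(\mtx{X})-\mtx{F}(\mtx{X}'))]$ applied to $\mtx{F}=\econst^{\theta\,\cdot}$, the integral representation of $\econst^{\mtx{A}}-\econst^{\mtx{B}}$, and a separately isolated ``exponential mean value trace inequality'' (the paper's Lemma~3.2, proved via operator Cauchy--Schwarz plus Young's inequality for commuting left/right multiplication operators, which plays exactly the role of the AM--GM split you anticipate), followed by conversion to $\mtx{V}_{\mtx{X}}, \mtx{V}^{\mtx{K}}$, the comparison hypothesis, and integration of the resulting differential inequality for $\log m(\theta)$. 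So this is the same approach as the paper's, and your guess about where the novel inequality lives and what it needs to do is correct.
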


\noindent
\thmref{concentration-bdd} extends the concentration result of \cite[Thm.~4.1]{MackeyJoChFaTr12}, which only applies to matrix Stein pairs.
The argument leading up to \thmref{concentration-bdd} is very similar with the proof of the earlier result.  The main innovation is a new type of mean value inequality for matrices that improves on~\cite[Lem.~3.4]{MackeyJoChFaTr12}.

\begin{lemma}[Exponential Mean Value Trace Inequality] \label{lem:emvti} 
For all matrices $\mtx{A}, \mtx{B}, \mtx{C} \in \Sym{d}$ and all $s > 0$ it holds that
\begin{align*}
|\trace \left[\mtx{C} (\econst^{\mtx{A}} - \econst^{\mtx{B}} )\right]| \leq  
	\frac{1}{4} \trace[(s \, (\mtx{A}-\mtx{B})^2+ s^{-1} \, \mtx{C}^2)(\econst^{\mtx{A}}+\econst^{\mtx{B}})] .
\end{align*}
\end{lemma}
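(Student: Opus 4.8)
The plan is to integrate a Duhamel-type identity for $\econst^\mtx{A}-\econst^\mtx{B}$ chosen so that $\econst^\mtx{A}$ and $\econst^\mtx{B}$ appear as its boundary values, and then to apply a Cauchy--Schwarz inequality for a weighted trace form followed by the scalar arithmetic--geometric mean bound. First I would record the \emph{asymmetric} representation
\[
\econst^\mtx{A}-\econst^\mtx{B} \;=\; \int_0^1 \econst^{t\mtx{A}}\,(\mtx{A}-\mtx{B})\,\econst^{(1-t)\mtx{B}}\idiff{t},
\]
which follows by differentiating $t\mapsto\econst^{t\mtx{A}}\econst^{(1-t)\mtx{B}}$ and using that $\mtx{A}$ commutes with $\econst^{t\mtx{A}}$. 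Pairing with $\mtx{C}$ and using the triangle inequality for the integral reduces the lemma to a pointwise-in-$t$ bound on $\bigl|\trace[\mtx{C}\,\econst^{t\mtx{A}}(\mtx{A}-\mtx{B})\,\econst^{(1-t)\mtx{B}}]\bigr|$.

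For fixed $t$, the bilinear map $(\mtx{U},\mtx{V})\mapsto\trace[\mtx{U}^\adj\,\econst^{t\mtx{A}}\,\mtx{V}\,\econst^{(1-t)\mtx{B}}]$ is a genuine semi-inner product on $\M^d$, since it equals the trace inner product of $\econst^{t\mtx{A}/2}\mtx{U}\econst^{(1-t)\mtx{B}/2}$ with $\econst^{t\mtx{A}/2}\mtx{V}\econst^{(1-t)\mtx{B}/2}$. Applying Cauchy--Schwarz with the Hermitian matrices $\mtx{U}=\mtx{C}$ and $\mtx{V}=\mtx{A}-\mtx{B}$, and then $\sqrt{xy}\le\tfrac12(s^{-1}x+sy)$ for $x,y\ge0$, I would obtain
\[
\bigl|\trace[\mtx{C}\,\econst^{t\mtx{A}}(\mtx{A}-\mtx{B})\,\econst^{(1-t)\mtx{B}}]\bigr|\le\tfrac12\bigl(s^{-1}\trace[\mtx{C}\,\econst^{t\mtx{A}}\mtx{C}\,\econst^{(1-t)\mtx{B}}]+s\,\trace[(\mtx{A}-\mtx{B})\,\econst^{t\mtx{A}}(\mtx{A}-\mtx{B})\,\econst^{(1-t)\mtx{B}}]\bigr).
\]

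Finally I would integrate this over $t\in[0,1]$ using the elementary fact that, for any $\mtx{R}\in\Sym{d}$, the scalar map $t\mapsto\trace[\mtx{R}\,\econst^{t\mtx{A}}\mtx{R}\,\econst^{(1-t)\mtx{B}}]$ is convex on $[0,1]$: writing $\mtx{A}=\sum_j a_j\vct{u}_j\vct{u}_j^\adj$ and $\mtx{B}=\sum_k b_k\vct{v}_k\vct{v}_k^\adj$ shows it equals $\sum_{j,k}\abs{\vct{u}_j^\adj\mtx{R}\vct{v}_k}^2\,\econst^{t a_j+(1-t)b_k}$, a nonnegative combination of exponentials of affine functions of $t$, hence convex. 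Consequently $\int_0^1\trace[\mtx{R}\,\econst^{t\mtx{A}}\mtx{R}\,\econst^{(1-t)\mtx{B}}]\idiff{t}\le\tfrac12\bigl(\trace[\mtx{R}^2\econst^\mtx{A}]+\trace[\mtx{R}^2\econst^\mtx{B}]\bigr)$, the endpoint values of the integrand being $\trace[\mtx{R}^2\econst^\mtx{B}]$ at $t=0$ and $\trace[\mtx{R}^2\econst^\mtx{A}]$ at $t=1$. Specializing to $\mtx{R}=\mtx{C}$ and to $\mtx{R}=\mtx{A}-\mtx{B}$ and combining with the previous display gives exactly the asserted inequality, the factor $\tfrac14$ being the product of the $\tfrac12$ from the arithmetic--geometric mean step and the $\tfrac12$ from endpoint averaging.

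The one real decision point is the choice of representation in the first step. The more symmetric identity $\econst^\mtx{A}-\econst^\mtx{B}=\int_0^1\ddt{\tau}\econst^{(1-\tau)\mtx{B}+\tau\mtx{A}}\idiff{\tau}$ survives the same Cauchy--Schwarz manoeuvre but leaves a residual term $\int_0^1\trace[\mtx{R}^2\,\econst^{(1-\tau)\mtx{B}+\tau\mtx{A}}]\idiff{\tau}$, whose control requires the convexity of $\tau\mapsto\trace[\mtx{K}\,\econst^{(1-\tau)\mtx{B}+\tau\mtx{A}}]$ for positive semidefinite $\mtx{K}$ — a true but distinctly less elementary statement. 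The asymmetric representation makes $\econst^\mtx{A}$ and $\econst^\mtx{B}$ the literal endpoints, so every step stays at the level of Cauchy--Schwarz and convexity of a finite sum of exponentials; the rest is bookkeeping, and I expect no further obstacle.
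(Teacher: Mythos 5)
Your argument is correct and follows the same skeleton as the paper's proof: the identical Duhamel representation $\econst^{\mtx{A}}-\econst^{\mtx{B}}=\int_0^1 \econst^{\tau\mtx{A}}(\mtx{A}-\mtx{B})\econst^{(1-\tau)\mtx{B}}\,\mathrm{d}\tau$, a Cauchy--Schwarz step with respect to the positive form $\mtx{M}\mapsto \econst^{\tau\mtx{A}}\mtx{M}\econst^{(1-\tau)\mtx{B}}$, and a logarithmic-mean-versus-arithmetic-mean bound to produce $\tfrac12(\econst^{\mtx{A}}+\econst^{\mtx{B}})$. The difference is in execution: the paper works at the operator level, invoking Young's inequality for commuting left/right multiplication operators (Lemma~\ref{lem:young-commute}) and the operator Cauchy--Schwarz inequality (Lemma~\ref{lem:operator-cs}), applies Cauchy--Schwarz across the whole integral, and saves the weighted AM--GM for the last line; you instead exhibit the fixed-$\tau$ form explicitly as a Gram form (so ordinary Cauchy--Schwarz suffices, no Jordan decomposition), apply the weighted AM--GM pointwise in $\tau$, and replace operator Young by the elementary observation that $\tau\mapsto\trace[\mtx{R}\,\econst^{\tau\mtx{A}}\mtx{R}\,\econst^{(1-\tau)\mtx{B}}]=\sum_{j,k}\abs{\vct{u}_j^\adj\mtx{R}\vct{v}_k}^2\econst^{\tau a_j+(1-\tau)b_k}$ is convex, so the integral is at most the average of its endpoint values. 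Your version is more self-contained and avoids the appendix machinery entirely; what the paper's abstraction buys is reusability --- the same two operator lemmas are recycled verbatim in the proof of the polynomial mean value trace inequality (Lemma~\ref{lem:pmvti}), where the analogous fixed-$k$ form involves $\mtx{A}^k$ and $\mtx{B}^{q-1-k}$, which need not be positive semidefinite, so your ``genuine semi-inner product'' shortcut would not carry over without reintroducing something like the Jordan-decomposition argument.
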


\noindent
See Appendix~\ref{sec:proof-concentration-bdd} for the proofs of \thmref{concentration-bdd} and \lemref{emvti}.

\subsection{Polynomial Moment Inequalities}

The second main result shows that we can bound the polynomial moments of
a random matrix in terms of the conditional variance and the kernel
conditional variance.

\begin{thm} [Matrix BDG Inequality] \label{thm:bdg-inequality}
Suppose that $(\mtx{X}, \mtx{X}')$ is a $\mtx{K}$-Stein pair based
on an auxiliary exchangeable pair $(Z,Z')$.
Let $p \geq 1$ be a natural number, and
assume that $\Expect \pnorm{2p}{\mtx{X}}^{2p} < \infty$
and $\Expect \norm{\mtx{K}(Z,Z')}^{2p} < \infty$.
Then, for any $s > 0$,
$$
\big( \Expect \pnorm{2p}{\mtx{X}}^{2p} \big)^{1/2p} 
	\leq \sqrt{2p-1} \left( \Expect \pnorm{p}{\frac{1}{2}(s \, \mtx{V}_{\mtx{X}} + s^{-1} \, \mtx{V}^{\mtx{K}}) }^p \right)^{1/2p}.
$$
We have written $\pnorm{p}{\cdot}$ for the Schatten $p$-norm.
\end{thm}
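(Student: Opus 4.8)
The plan is to mimic the classical Stein's-method route to Burkholder--Davis--Gundy-type bounds, exploiting the antisymmetry of the kernel together with exchangeability. Write $q \defby 2p$ and abbreviate $\mtx{K} \defby \mtx{K}(Z,Z')$. The starting point is the identity $\Expect \pnorm{q}{\mtx{X}}^{q} = \Expect \trace[\mtx{X} \cdot \abs{\mtx{X}}^{q-2}\mtx{X}^{*}] = \Expect\trace[\mtx{X} \cdot \mtx{X}^{q-1}]$ (using that $\mtx{X}$ is Hermitian, so $\abs{\mtx{X}}^{q-2}\mtx{X} = \mtx{X}^{q-1}$ when $q$ is even). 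Replacing the outer $\mtx{X}$ by $\Expect[\mtx{K}\condl Z]$ via the reproducing property~\eqref{eqn:K-stein-pair} and pulling the conditional expectation out by the tower rule gives
\begin{equation*}
\Expect \pnorm{q}{\mtx{X}}^{q} = \Expect \trace\bigl[ \mtx{K}\, \mtx{X}^{q-1} \bigr].
\end{equation*}
Now symmetrize: since $(Z,Z')$ is exchangeable and $\mtx{K}$ is antisymmetric, swapping $Z\leftrightarrow Z'$ sends $\mtx{K} \mapsto -\mtx{K}$ and $\mtx{X}\mapsto \mtx{X}'$, so $\Expect\trace[\mtx{K}\mtx{X}^{q-1}] = -\Expect\trace[\mtx{K}(\mtx{X}')^{q-1}]$, whence
\begin{equation*}
\Expect \pnorm{q}{\mtx{X}}^{q} = \tfrac12 \Expect \trace\bigl[ \mtx{K}\,\bigl(\mtx{X}^{q-1} - (\mtx{X}')^{q-1}\bigr) \bigr].
\end{equation*}

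The next step is a deterministic trace inequality that plays the role of the exponential mean value trace inequality \lemref{emvti}, but for the power function $t\mapsto t^{q-1}$ in place of $t \mapsto \econst^{t}$. Concretely, one wants a bound of the shape
\begin{equation*}
\bigl\lvert \trace\bigl[\mtx{C}\,(\mtx{A}^{q-1} - \mtx{B}^{q-1})\bigr]\bigr\rvert \leq \tfrac{q-1}{2}\,\trace\Bigl[ \bigl(s\,(\mtx{A}-\mtx{B})^{2} + s^{-1}\mtx{C}^{2}\bigr)\cdot \tfrac12(\abs{\mtx{A}}^{q-2} + \abs{\mtx{B}}^{q-2})\Bigr]
\end{equation*}
for all Hermitian $\mtx{A},\mtx{B},\mtx{C}$ and $s>0$ — this should be provable by the same divided-difference / Hadamard-product argument used for \lemref{emvti}, since the divided difference of $t^{q-1}$ is controlled by $(q-1)$ times an average of $\abs{\cdot}^{q-2}$ evaluated at the endpoints. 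Applying this with $\mtx{A}=\mtx{X}$, $\mtx{B}=\mtx{X}'$, $\mtx{C}=\mtx{K}$, taking expectations, and conditioning on $Z$ converts $\tfrac12\Expect[(\mtx{X}-\mtx{X}')^2\condl Z]$ into $\mtx{V}_{\mtx{X}}$ and $\tfrac12\Expect[\mtx{K}^2\condl Z]$ into $\mtx{V}^{\mtx{K}}$; using exchangeability once more to symmetrize the $\abs{\mtx{X}}^{q-2}+\abs{\mtx{X}'}^{q-2}$ factor back to $2\abs{\mtx{X}}^{q-2}$ in expectation, we arrive at
\begin{equation*}
\Expect\pnorm{q}{\mtx{X}}^{q} \leq (q-1)\, \Expect \trace\Bigl[ \bigl(s\,\mtx{V}_{\mtx{X}} + s^{-1}\mtx{V}^{\mtx{K}}\bigr)\,\abs{\mtx{X}}^{q-2}\Bigr] = (2p-1)\,\Expect\trace\Bigl[ \tfrac12\bigl(s\,\mtx{V}_{\mtx{X}}+s^{-1}\mtx{V}^{\mtx{K}}\bigr)\cdot 2\abs{\mtx{X}}^{2p-2}\Bigr].
\end{equation*}

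Finally, apply the trace Hölder inequality $\trace[\mtx{M}\mtx{N}] \leq \pnorm{p}{\mtx{M}}\,\pnorm{p/(p-1)}{\mtx{N}}$ with $\mtx{M} = \tfrac12(s\mtx{V}_{\mtx{X}}+s^{-1}\mtx{V}^{\mtx{K}})$ and $\mtx{N} = \abs{\mtx{X}}^{2p-2}$, noting $\pnorm{p/(p-1)}{\abs{\mtx{X}}^{2p-2}} = \pnorm{2p}{\mtx{X}}^{2p-2}$, and then a scalar Hölder inequality across the expectation, to get $\Expect\pnorm{2p}{\mtx{X}}^{2p} \leq (2p-1)\,\bigl(\Expect\pnorm{p}{\mtx{M}}^{p}\bigr)^{1/p}\bigl(\Expect\pnorm{2p}{\mtx{X}}^{2p}\bigr)^{(p-1)/p}$. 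Dividing through by the common factor $\bigl(\Expect\pnorm{2p}{\mtx{X}}^{2p}\bigr)^{(p-1)/p}$ (finite by hypothesis) and taking $2p$-th roots yields the claimed bound. I expect the main obstacle to be establishing the power-function mean value trace inequality cleanly with the sharp constant $\tfrac{q-1}{2}$ and the symmetric endpoint average $\tfrac12(\abs{\mtx{A}}^{q-2}+\abs{\mtx{B}}^{q-2})$; the rest is bookkeeping with exchangeability, the tower property, and Hölder. A secondary subtlety is justifying all the interchanges of sum/expectation and the manipulation with $\abs{\mtx{X}}^{2p-2}$ when $\mtx{X}$ is indefinite, which is why the hypothesis is restricted to even exponents $2p$.
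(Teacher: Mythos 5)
Your proposal follows the paper's own route essentially step for step: the identity $\Expect\pnorm{2p}{\mtx{X}}^{2p} = \Expect\trace[\mtx{X}\cdot\mtx{X}^{2p-1}]$, the exchangeable-pair symmetrization $E=\tfrac12\Expect\trace[\mtx{K}(\mtx{X}^{2p-1}-(\mtx{X}')^{2p-1})]$ via \lemref{exchange}, a polynomial mean value trace inequality, symmetrize/condition to produce $\mtx{V}_{\mtx{X}}$ and $\mtx{V}^{\mtx{K}}$, and finally a trace-H\"older / scalar-H\"older / divide-through argument. The mean value inequality you isolate is exactly \lemref{pmvti} after reindexing (your bound with $\tfrac{q-1}{2}$ and $\tfrac12(\abs{\mtx{A}}^{q-2}+\abs{\mtx{B}}^{q-2})$ is identical to the paper's $\tfrac{q'}{4}(\abs{\mtx{A}}^{q'-1}+\abs{\mtx{B}}^{q'-1})$ with $q'=q-1$), though the paper proves it via the telescoping decomposition $\mtx{A}^{q'}-\mtx{B}^{q'}=\sum_k \mtx{A}^k(\mtx{A}-\mtx{B})\mtx{B}^{q'-1-k}$, operator Young's inequality, and operator Cauchy--Schwarz rather than a Daleckii--Krein/Hadamard-product representation. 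One bookkeeping slip: your display $\Expect\pnorm{q}{\mtx{X}}^q\leq (q-1)\Expect\trace[(s\mtx{V}_{\mtx{X}}+s^{-1}\mtx{V}^{\mtx{K}})\abs{\mtx{X}}^{q-2}]$ is off by a factor of $2$ --- the correct intermediate has the $\tfrac12$ in front of $(s\mtx{V}_{\mtx{X}}+s^{-1}\mtx{V}^{\mtx{K}})$ --- but you silently drop the compensating ``$\cdot\,2$'' when you feed $\mtx{M},\mtx{N}$ into H\"older, so the two errors cancel and the stated conclusion is right.
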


\noindent
\thmref{bdg-inequality} generalizes the matrix Burkholder--Davis--Gundy inequality \cite[Thm.~7.1]{MackeyJoChFaTr12}, which only applies to matrix Stein pairs.
This result depends on another novel mean value inequality for
matrices.

\begin{lemma}[Polynomial Mean Value Trace Inequality] \label{lem:pmvti} 
For all matrices $\mtx{A}, \mtx{B}, \mtx{C} \in \Sym{d}$, all integers $q \geq 1$, and all $s >0$, it holds that
\begin{align*}
\abs{\trace \left[\mtx{C} (\mtx{A}^{q} - \mtx{B}^{q} )\right]} \leq  
	\frac{q}{4} \trace \big[(s \, (\mtx{A}-\mtx{B})^2+s^{-1} \, \mtx{C}^2)(\abs{\mtx{A}}^{q-1} + \abs{\mtx{B}}^{q-1}) \big].
\end{align*}
\end{lemma}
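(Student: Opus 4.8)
The plan is to reduce the polynomial mean value trace inequality to a telescoping identity followed by a term-by-term application of the AM–GM-type bound $\real(\mtx{MN}) \psdle \tfrac12(s\,\mtx{M}^2 + s^{-1}\mtx{N}^2)$ from~\eqref{eqn:matrix-am-gm}. First I would write the difference of powers as a telescoping sum,
\begin{equation*}
\mtx{A}^q - \mtx{B}^q = \sum_{k=0}^{q-1} \mtx{A}^k (\mtx{A} - \mtx{B}) \mtx{B}^{q-1-k},
\end{equation*}
so that
\begin{equation*}
\trace\!\left[\mtx{C}(\mtx{A}^q - \mtx{B}^q)\right] = \sum_{k=0}^{q-1} \trace\!\left[\mtx{C}\,\mtx{A}^k (\mtx{A}-\mtx{B})\,\mtx{B}^{q-1-k}\right].
\end{equation*}
Using cyclicity of the trace and the fact that $\mtx{C}$, $\mtx{A}-\mtx{B}$, $\mtx{A}$, $\mtx{B}$ are all Hermitian, each summand can be rewritten so that the ``$\mtx{C}$'' factor and the ``$\mtx{A}-\mtx{B}$'' factor are separated by a product of powers of $\mtx{A}$ and $\mtx{B}$; the symmetrization $\trace[\mtx{MN}] = \trace[\real(\mtx{MN})]$ lets us then invoke a Hermitian bilinear bound.

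Next I would handle a single term $\trace[\mtx{C}\,\mtx{A}^k(\mtx{A}-\mtx{B})\mtx{B}^{q-1-k}]$. The idea is to pair a ``half'' of the power block with $\mtx{C}$ and the other half with $\mtx{A}-\mtx{B}$, apply the scalar-weighted AM–GM inequality~\eqref{eqn:matrix-am-gm} with weight $s$ inside the trace (which is legitimate since $\trace[\mtx{D}(\cdot)]$ is not monotone for general $\mtx{D}$, so one must be careful — this is exactly where the factor $\econst^{\mtx{A}}+\econst^{\mtx{B}}$ or $\abs{\mtx{A}}^{q-1}+\abs{\mtx{B}}^{q-1}$ in the conclusion comes from). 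Concretely, I expect the clean route is to first prove the companion fact that for Hermitian $\mtx{M}$ and a positive semidefinite weight, one can bound $\abs{\trace[\mtx{P}\,\mtx{M}\,\mtx{Q}]}$ by a symmetric expression in $\mtx{P}$ and $\mtx{Q}$; then sum the $q$ resulting bounds and recognize the geometric-type sums $\sum_k \mtx{A}^{\text{something}}\cdots$ as dominated by $q/2 \cdot (\abs{\mtx{A}}^{q-1} + \abs{\mtx{B}}^{q-1})$ after a further convexity/majorization step. The factor $q/4$ in the statement is the product of $q$ (number of telescoping terms) and $1/4$ (the $1/2$ from symmetrizing the trace times the $1/2$ from AM–GM).

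The main obstacle, as I see it, is the non-monotonicity of the weighted trace functional: one cannot simply substitute a semidefinite inequality $\real(\mtx{MN}) \psdle \tfrac12(s\mtx{M}^2+s^{-1}\mtx{N}^2)$ underneath $\trace[\mtx{C}\,\cdot\,]$ unless $\mtx{C}$ is known to be positive semidefinite, which it is not. The fix is to route the argument through a diagonalization of $\mtx{C}$ (or of one of $\mtx{A},\mtx{B}$) and reduce to a scalar inequality over eigenvalue pairs, or equivalently to use an integral representation $\mtx{A}^q - \mtx{B}^q$ via the divided-difference / double operator integral formalism so that the weight $\abs{\mtx{A}}^{q-1}+\abs{\mtx{B}}^{q-1}$ arises naturally from bounding the Loewner divided-difference kernel $\frac{a^q-b^q}{a-b}$ by $q\,\max(\abs{a},\abs{b})^{q-1} \le q(\abs{a}^{q-1}+\abs{b}^{q-1})$. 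Either way, once the problem is made ``commutative'' at the level of eigenvalues, the remaining step is the elementary scalar inequality $\abs{c(a^q-b^q)} \le \tfrac{q}{4}(s(a-b)^2 + s^{-1}c^2)(\abs{a}^{q-1}+\abs{b}^{q-1})$, which follows from $\abs{c(a-b)} \le \tfrac14(s(a-b)^2+s^{-1}c^2)\cdot 2$ combined with $\abs{a^q-b^q}/\abs{a-b} \le q(\abs a^{q-1}+\abs b^{q-1})/2$; summing back over the eigenbasis and using $\trace[\mtx{C}\cdot] = \sum$ (diagonal entries in that basis) recovers the claimed trace inequality. I would also note that \lemref{emvti} follows from \lemref{pmvti} by applying it with $q$ large, dividing by $q!$ after rescaling, and summing the exponential series — so in the write-up it may be cleanest to prove \lemref{pmvti} first and deduce \lemref{emvti} as a limit.
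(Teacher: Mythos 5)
Your final argument is correct, but it takes a genuinely different route from the paper's proof. The paper also begins with the telescoping identity $\mtx{A}^q-\mtx{B}^q=\sum_{k=0}^{q-1}\mtx{A}^k(\mtx{A}-\mtx{B})\mtx{B}^{q-1-k}$, but then stays basis-free: it introduces the left- and right-multiplication operators $\mathcal{A}_k(\mtx{M})=\mtx{A}^k\mtx{M}$ and $\mathcal{B}_k(\mtx{M})=\mtx{M}\mtx{B}^k$ on $\M^d$, bounds $\sum_k\abs{\mathcal{A}_k\mathcal{B}_{q-k-1}}$ by $\tfrac{q}{2}\big(\abs{\mathcal{A}_1}^{q-1}+\abs{\mathcal{B}_1}^{q-1}\big)$ using Young's inequality for commuting operators, and handles exactly the sign obstruction you identified (the weight $\mtx{C}$ need not be positive) with an operator Cauchy--Schwarz inequality, finishing with scalar AM--GM to introduce $s$. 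Your route instead scalarizes: expanding in the eigenbases of $\mtx{A}=\sum_i a_i\vct{u}_i\vct{u}_i^*$ and $\mtx{B}=\sum_j b_j\vct{v}_j\vct{v}_j^*$, one has $\trace[\mtx{C}(\mtx{A}^q-\mtx{B}^q)]=\sum_{i,j}\frac{a_i^q-b_j^q}{a_i-b_j}\,(\vct{v}_j^*\mtx{C}\vct{u}_i)(\vct{u}_i^*(\mtx{A}-\mtx{B})\vct{v}_j)$, and your scalar inequality applies entrywise with $a=a_i$, $b=b_j$ and with $\abs{c}$, $\abs{a-b}$ replaced by the moduli of these mixed-basis entries; summing over $i,j$ reproduces exactly the right-hand side, since for instance $\trace[(\mtx{A}-\mtx{B})^2\abs{\mtx{A}}^{q-1}]=\sum_{i,j}\abs{a_i}^{q-1}\abs{\vct{u}_i^*(\mtx{A}-\mtx{B})\vct{v}_j}^2$. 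Two clarifications for a clean write-up: it is the eigenbases of $\mtx{A}$ and $\mtx{B}$ you need (diagonalizing $\mtx{C}$ does not help), and you must use the sharper divided-difference bound $\abs{a^q-b^q}/\abs{a-b}\leq\tfrac{q}{2}(\abs{a}^{q-1}+\abs{b}^{q-1})$, as in your last sentence; the weaker $q\max(\abs{a},\abs{b})^{q-1}$ mentioned earlier only yields the constant $q/2$. What the paper's operator formulation buys is a basis-free argument that extends verbatim to infinite-dimensional Schatten-class operators and runs in parallel with the proof of \lemref{emvti}; your version is more elementary. One caveat: your closing remark that \lemref{emvti} follows from \lemref{pmvti} by summing the exponential series is not correct, because term-by-term summation produces the weight $\econst^{\abs{\mtx{A}}}+\econst^{\abs{\mtx{B}}}$ rather than $\econst^{\mtx{A}}+\econst^{\mtx{B}}$, which is a strictly weaker conclusion when $\mtx{A}$ or $\mtx{B}$ has negative eigenvalues; the paper proves \lemref{emvti} separately via the integral representation of $\econst^{\mtx{A}}-\econst^{\mtx{B}}$ and a logarithmic--arithmetic mean comparison.
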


\noindent
The proofs of \thmref{bdg-inequality} and \lemref{pmvti} can be found in Appendix~\ref{sec:proof-bdg-inequality}.  We remark that both results extend directly to infinite-dimensional Schatten-class operators.

\section{Example: Matrix Bounded Differences Inequality} \label{sec:matrix-bdd-diff}

As a first example, we show how to use Theorem~\ref{thm:concentration-bdd} to derive a matrix version of McDiarmid's bounded differences inequality~\cite{McDiarmid89}.  

\begin{cor}[Matrix Bounded Differences] \label{cor:bound-diff}
	Suppose that $\Zvec \defby (Z_1, \dots, Z_n) \in \metricspace$
	is a vector of independent random variables that takes values in a Polish space $\metricspace$.
	Let $\mtx{H} : \metricspace \to \Sym{d}$ be a measurable function,
	and let $(\mtx{A}_1, \dots, \mtx{A}_n)$ be
 	a deterministic sequence of Hermitian matrices that satisfy
	$$
	(\mtx{H}(z_1,\dots, z_n) - \mtx{H}(z_1,\dots,z_j',\dots,z_n))^2 \psdle \mtx{A}_j^2
	$$
	where $z_k,z_k'$ range over the possible values of $Z_k$ for each $k$.
	Compute the boundedness parameter
	$$
	\sigma^2 \defby \norm{ \sum\nolimits_{j=1}^n \mtx{A}_j^2 }.
	$$
	Then, for all $t\geq 0$, 
	$$
	\Prob{ \lambda_{\max}\left( \mtx{H}(\Zvec) - \Expect \mtx{H}(\Zvec) \right) \geq t }
		\leq d \cdot \econst^{-t^2/\sigma^2}.
	$$
	Furthermore,
	$$
	\Expect \lambda_{\max}\left( \mtx{H}(\Zvec) - \Expect \mtx{H}(\Zvec) \right) \leq \sigma \sqrt{\log d}.
	$$ 
\end{cor}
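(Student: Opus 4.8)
The plan is to specialize Theorem~\ref{thm:concentration-bdd} to the kernel Stein pair constructed for the bounded-differences setting in Section~\ref{sec:bounded-diffs}. First I would recall the setup from that section: form the exchangeable counterpart $\Zvec'$ by resampling a uniformly random coordinate, set $\mtx{X} \defby \mtx{H}(\Zvec) - \Expect\mtx{H}(\Zvec)$ and $\mtx{X}' \defby \mtx{H}(\Zvec') - \Expect\mtx{H}(\Zvec)$, and invoke the kernel coupling based on the coupon-collector chain. Lemma~\ref{lem:kernel-coupling} (with the coupon-collector coupling-time bound) shows $(\mtx{X},\mtx{X}')$ is a genuine $\mtx{K}$-Stein pair, so the hypotheses of Theorem~\ref{thm:concentration-bdd} are meaningful here.

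Next I would import the variance estimates \eqref{eqn:bdd-diff-DeltaX} and \eqref{eqn:bdd-diff-DeltaXK} already derived in Section~\ref{sec:bounded-diffs}:
\begin{align*}
\mtx{V}_{\mtx{X}} \psdle \frac{1}{2n}\sum\nolimits_{j=1}^n \mtx{A}_j^2
\quad\text{and}\quad
\mtx{V}^{\mtx{K}} \psdle \frac{n}{2}\sum\nolimits_{j=1}^n \mtx{A}_j^2 .
\end{align*}
Writing $\mtx{W} \defby \sum_{j=1}^n \mtx{A}_j^2 \psdle \sigma^2 \Id$ (the last bound by definition of $\sigma^2$), these read $\mtx{V}_{\mtx{X}} \psdle (2n)^{-1}\sigma^2\,\Id$ and $\mtx{V}^{\mtx{K}} \psdle (n/2)\,\sigma^2\,\Id$. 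I would now match these against the comparison hypothesis \eqref{eqn:comparison} with the choices $c = 0$, $v = \sigma^2/2$, and $s = n$: then $s^{-1}(c\mtx{X}+v\Id) = \sigma^2/(2n)\cdot\Id$ dominates $\mtx{V}_{\mtx{X}}$, and $s(c\mtx{X}+v\Id) = (n\sigma^2/2)\Id$ dominates $\mtx{V}^{\mtx{K}}$, exactly as required.

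Finally I would feed these parameters into the conclusions of Theorem~\ref{thm:concentration-bdd}. With $c=0$, the upper-tail bound $\Prob{\lambda_{\max}(\mtx{X}) \geq t} \le d\cdot\exp\{-t^2/(2v+2ct)\}$ collapses to $d\cdot\exp\{-t^2/(2v)\} = d\cdot\exp\{-t^2/\sigma^2\} = d\cdot\econst^{-t^2/\sigma^2}$, which is the claimed tail bound; likewise the expectation bound $\Expect\lambda_{\max}(\mtx{X}) \le \sqrt{2v\log d} + c\log d$ becomes $\sqrt{\sigma^2\log d} = \sigma\sqrt{\log d}$. The only point requiring a word of care is the reduction to $c=0$: one must check that taking $c=0$ is legitimate in Theorem~\ref{thm:concentration-bdd}, i.e.\ that the hypothesis \eqref{eqn:comparison} and its consequences make sense and remain valid in that degenerate case — this is the main (minor) obstacle, and it is handled either by the stated theorem covering $c \ge 0$ directly or by letting $c \downarrow 0$ in the second tail inequality. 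Everything else is a direct substitution, so the corollary follows.
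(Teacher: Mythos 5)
Your argument is correct and matches the paper's proof essentially line for line: both import the variance estimates~\eqref{eqn:bdd-diff-DeltaX} and~\eqref{eqn:bdd-diff-DeltaXK} from the kernel Stein pair of Section~\ref{sec:bounded-diffs} and then invoke Theorem~\ref{thm:concentration-bdd} with $c = 0$, $v = \sigma^2/2$, and $s = n$. The worry you flag about $c = 0$ is a non-issue, since Theorem~\ref{thm:concentration-bdd} explicitly allows nonnegative $c$ and the final tail estimate $d\exp\{-t^2/(2v+2ct)\}$ is well defined there.
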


\begin{proof}
Introduce the random matrix $\mtx{X} = \mtx{H}(\Zvec) - \Expect \mtx{H}(\Zvec)$.  
We can use the kernel Stein pair constructed in Section~\ref{sec:bounded-diffs}
to study the behavior of $\mtx{X}$.  According to~\eqref{eqn:bdd-diff-DeltaX}, the conditional variance satisfies
$$
\mtx{V}_{\mtx{X}} 
	\psdle \frac{1}{2n}\sum\nolimits_{j=1}^n \mtx{A}_j^2
	\psdle \left(\frac{\sigma^2}{2} \, \Id\right)/n.
$$
According to~\eqref{eqn:bdd-diff-DeltaXK}, the kernel conditional variance satisfies
$$
\mtx{V}^{\mtx{K}} 
	\psdle \frac{n}{2}\sum\nolimits_{j=1}^n \mtx{A}_j^2
	\psdle n\left(\frac{\sigma^2}{2} \, \Id\right),
$$
Invoke Theorem~\ref{thm:concentration-bdd} with $c = 0$, $v = \sigma^2/2$, and $s = n$
to complete the bound.
\end{proof}

Corollary~\ref{cor:bound-diff} improves on the matrix bounded differences inequality~\cite[Cor.~7.5]{Tro11:User-Friendly-FOCM}, which features an additional factor of 1/8 in the exponent of the tail bound.  It also strengthens the bounded differences inequality~\cite[Cor.~11.1]{MackeyJoChFaTr12} for matrix Stein pairs, which requires an extra assumption that the function $\mtx{H}$ is ``self-reproducing.''

\begin{rem}[Extensions]
The conclusions of Corollary~\ref{cor:bound-diff} hold with $\sigma^2 \defby  \smnorm{}{ \mtx{A}^2 }$ under either one of the weaker hypotheses
$$
\sum\nolimits_j(\mtx{H}(z_1,\dots, z_n) - \mtx{H}(z_1,\dots,z_j',\dots,z_n))^2 \psdle \mtx{A}^2
$$
or
$$
\sum\nolimits_j\Expect\big[(\mtx{H}(z_1,\dots, z_n) - \mtx{H}(z_1,\dots,Z_j\dots,z_n))^2 \big] \psdle \mtx{A}^2
$$
where $\mtx{A} \in \Sym{d}$ is deterministic and $z_k, z_k'$ range over all possible values of $Z_k$ for each index $k$.  This claim follows from a simple adaptation of the argument in Section~\ref{sec:bounded-diffs}.

We can also obtain moment inequalities for the random matrix $\mtx{H}(\vct{Z})$ by invoking Theorem~\ref{thm:bdg-inequality}.  We have omitted a detailed statement because exponential tail bounds are more popular in applications.
\end{rem}

\section{Example: Matrix Bounded Differences without Independence} \label{sec:dobrushin}

A key strength of the method of exchangeable pairs is the fact that it also applies
to random matrices that are built from weakly dependent random variables.  This
section describes an extension of Corollary~\ref{cor:bound-diff} that holds even
when the input variables exhibit some interactions.

To quantify the amount of dependency among the variables, we use
a Dobrushin interdependence matrix~\cite{dobrushin1970prescribing}.
This concept involves a certain amount of auxiliary notation.
Given a vector $\vct{x} = (x_1, \dots, x_n)$,
we write $\vct{x}_{-i} = (x_1, \dots x_{i-1}, x_{i+1}, \dots, x_n)$
for the vector with its $i$th component deleted.
Let $\Zvec = (Z_1, \dots, Z_n)$ be a vector of random variables
taking values in a Polish space $\metricspace$ with sigma algebra
$\mathcal{F}$.  The symbol $\mu_i(\cdot \condl \Zvec_{-i})$ refers to
the distribution of $\Zvec_i$ conditional on the random vector
$\Zvec_{-i}$.  We also require the total variation distance $\tv$
between probability measures $\mu$ and $\nu$ on $(\metricspace, \mathcal{F})$:
\begin{align} \label{eqn:tv}
\tv(\nu,\mu) \defby \sup_{A\in\mathcal{F}} \abs{\nu(A) - \mu(A)}.
\end{align}
With this foundation in place, we can state the definition.

\begin{defn}[Dobrushin Interdependence Matrix] \label{def:dobrushin}
Let $\Zvec=(Z_1,$ $\ldots,Z_n)$ be a random vector taking values in a Polish space $\metricspace$.
Let $\mtx{D} \in \R^{n \times n}$ be a matrix with a zero diagonal that satisfies
the condition
\begin{align} \label{eqn:dobrushin}
\tv\big(\mu_i(\cdot\condl  \vct{x}_{-i}),\mu_i(\cdot\condl  \vct{y}_{-i}) \big)
\leq \sum\nolimits_{j=1}^n D_{ij}\II[x_j\ne y_j]
\end{align}
for each index $i$ and for all vectors $\vct{x}, \vct{y} \in \metricspace$.  Then $\mtx{D}$ is called a \term{Dobrushin interdependence matrix} for the random vector $\Zvec$.
\end{defn}

The kernel coupling method extends readily to the setting of weak dependence.
We obtain a new matrix bounded differences inequality, which is a significant
extension of Corollary~\ref{cor:bound-diff}.  This statement can be viewed as
a matrix version of Chatterjee's result~\cite[Thm.~4.3]{Cha08:Concentration-Inequalities}.

\begin{cor}[Dobrushin Matrix Bounded Differences] \label{cor:dob-bound-diff}
	Suppose that $\Zvec \defby (Z_1, \dots, Z_n)$ in a Polish space $\metricspace$ is a vector of dependent random variables with a Dobrushin interdependence matrix $\mtx{D}$
with the property that
\begin{align}\label{eqn:dobrushin-constraint}
\max\big\{ \indnorm{1}{\mtx{D}}, \ \indnorm{\infty}{\mtx{D}} \big\} < 1.
\end{align} 
Let $\mtx{H} : \metricspace \to \Sym{d}$ be a measurable function,
and let $(\mtx{A}_1, \dots, \mtx{A}_n)$
be a deterministic sequence of Hermitian matrices that satisfy
$$
(\mtx{H}(z_1,\dots, z_n) - \mtx{H}(z_1,\dots,z_j',\dots,z_n))^2 \psdle \mtx{A}_j^2
$$
where $z_k, z_k'$ range over the possible values of $Z_k$ for each $k$. Compute the 
boundedness and dependence parameters
\begin{equation*}
\sigma^2\defby \norm{ \sum\nolimits\nolimits_{j=1}^n \mtx{A}_j^2 }
\qtext{and}
b \defby \left[ 1-\frac{1}{2} \big(\indnorm{1}{\mtx{D}}+\indnorm{\infty}{\mtx{D}} \big) \right]^{-1}.
\end{equation*}
Then, for all $t\ge 0$,
\begin{equation*}
\Prob{\lambda_{\max}\left(\mtx{H}(Z)-\Expect\mtx{H}(Z)\right)\ge t}\le d\cdot \econst^{-t^2/(b\sigma^2)}.
\end{equation*}
Furthermore,
	$$
	\Expect \lambda_{\max}\left( \mtx{H}(Z) - \Expect \mtx{H}(Z) \right) \leq \sigma \sqrt{b\log d}.
	$$ 
\end{cor}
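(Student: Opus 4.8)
The plan is to verify the hypotheses of \thmref{concentration-bdd} with $c = 0$, $v = b\sigma^2/2$, and $s = nb$, via a Gibbs-sampler kernel Stein pair in the spirit of \secref{bounded-diffs} (this is the matrix analog of Chatterjee's Dobrushin-type result). Put $\mtx{\Psi} \defby \mtx{H} - \Expect\mtx{H}(\Zvec)$, so $\mtx{X} = \mtx{\Psi}(\Zvec)$ and $\Expect\mtx{\Psi}(\Zvec) = \zeromtx$; the coordinatewise bounded-difference hypothesis forces $\mtx{H}$ to have oscillation at most $\sum_j \norm{\mtx{A}_j}$, so $\Expect\norm{\mtx{X}}^2 < \infty$ and $(\mtx{X},\mtx{X}')$ will be a legitimate kernel Stein pair. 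Form the exchangeable counterpart $\Zvec'$ by drawing a uniformly random coordinate $J$ and resampling $Z_J$ from its conditional law $\mu_J(\cdot \condl \Zvec_{-J})$; single-site Glauber dynamics is reversible with respect to $\mathrm{Law}(\Zvec)$, so $(\Zvec, \Zvec')$ is an exchangeable pair inducing a transition kernel $P$. Because $\Zvec$ and $\Zvec'$ differ only in coordinate $J$, the hypothesis gives $(\mtx{X} - \mtx{X}')^2 \psdle \mtx{A}_J^2$, hence
\begin{equation*}
\mtx{V}_{\mtx{X}} = \tfrac{1}{2} \Expect\big[(\mtx{X} - \mtx{X}')^2 \condl \Zvec\big]
\psdle \tfrac{1}{2n} \sum\nolimits_j \mtx{A}_j^2
\psdle \tfrac{\sigma^2}{2n}\,\Id = s^{-1}(c\,\mtx{X} + v\,\Id),
\end{equation*}
so the first half of \eqref{eqn:comparison} is immediate and all the work lies in bounding $\mtx{V}^{\mtx{K}}$.

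For the kernel I would take the \emph{official} kernel coupling to be the \emph{product} coupling: let $(\Zvec_{(i)})_{i\ge0}$ and $(\Zvec'_{(i)})_{i\ge0}$ be two \emph{independent} Glauber chains started from $\Zvec_{(0)} = \Zvec$ and $\Zvec'_{(0)} = \Zvec'$. This trivially meets the conditional-independence requirement \eqref{eqn:kernel-coupling} of \defref{kernel-coupling}, and under it $\Expect[\mtx{\Psi}(\Zvec_{(i)}) - \mtx{\Psi}(\Zvec'_{(i)}) \condl \Zvec, \Zvec'] = P^i\mtx{\Psi}(\Zvec) - P^i\mtx{\Psi}(\Zvec')$. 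The key point is that this conditional expectation takes the \emph{same} value under \emph{any} coupling of the two marginal chains, so I may estimate it using a \emph{greedy} coupling introduced purely as a proof device: at each step choose a common coordinate uniformly and resample it in both chains through a maximal coupling of the two relevant conditional laws. Writing $q^{(i)}_k$ for the probability that coordinate $k$ disagrees between the greedily coupled chains after $i$ steps (started from the fixed pair $\Zvec, \Zvec'$, which disagree only at $J$), a one-step analysis — coordinate $k$ is refreshed with probability $1/n$, after which the Dobrushin condition \eqref{eqn:dobrushin} bounds the chance of a fresh disagreement, and otherwise it keeps its status — gives the linear recursion
\begin{equation*}
q^{(i+1)}_k \le \big(1 - \tfrac{1}{n}\big) q^{(i)}_k + \tfrac{1}{n} \sum\nolimits_j D_{kj}\, q^{(i)}_j,
\qquad q^{(0)} = \onevct_J,
\end{equation*}
so $q^{(i)} \le \mtx{M}^i \onevct_J$ entrywise, where $\onevct_J \in \R^n$ is the $J$th standard basis vector and $\mtx{M} \defby (1 - \tfrac{1}{n})\Id + \tfrac{1}{n}\mtx{D}$ has nonnegative entries. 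Since $\indnorm{1}{\mtx{M}} \le 1 - \tfrac{1}{n}(1 - \indnorm{1}{\mtx{D}}) < 1$ by \eqref{eqn:dobrushin-constraint}, the premise \eqref{eqn:coupling-premise} of \lemref{kernel-coupling} holds, so $(\mtx{X}, \mtx{X}')$ is a genuine $\mtx{K}$-Stein pair.

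To convert the disagreement estimate into a semidefinite bound, I would telescope $\mtx{H}(\Zvec_{(i)}) - \mtx{H}(\Zvec'_{(i)})$ one coordinate at a time into increments $\mtx{B}_k$, each of which vanishes unless coordinate $k$ disagrees and otherwise satisfies $\mtx{B}_k^2 \psdle \mtx{A}_k^2$ by the bounded-difference hypothesis. Writing $\Expect[\mtx{B}_k \condl \Zvec, \Zvec'] = q^{(i)}_k \mtx{G}_k$, conditional Jensen for the operator-convex square gives $\mtx{G}_k^2 \psdle \mtx{A}_k^2$, and operator convexity of the matrix square (noted after \eqref{eqn:square-convex}) applied with the weights $q^{(i)}_k / \sum_l q^{(i)}_l$ yields
\begin{equation*}
\Big(\Expect[\mtx{H}(\Zvec_{(i)}) - \mtx{H}(\Zvec'_{(i)}) \condl \Zvec, \Zvec']\Big)^2
= \Big(\sum\nolimits_k q^{(i)}_k \mtx{G}_k\Big)^2
\psdle \Big(\sum\nolimits_l q^{(i)}_l\Big) \sum\nolimits_k q^{(i)}_k \mtx{A}_k^2 .
\end{equation*}
Averaging over the uniform $J$ and bounding column sums of $\mtx{M}^i$ by $\indnorm{1}{\mtx{M}^i}$ and row sums by $\indnorm{\infty}{\mtx{M}^i}$ verifies hypothesis \eqref{eqn:K-term-bound} of \lemref{conditional-variance-bound} with $\mtx{\Gamma}(\Zvec) = \tfrac{1}{n} \sum_j \mtx{A}_j^2$ and $s_i^2 = \indnorm{1}{\mtx{M}^i}\,\indnorm{\infty}{\mtx{M}^i}$. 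Submultiplicativity, the bounds $\indnorm{1}{\mtx{M}} \le 1 - \tfrac{1}{n}(1 - \indnorm{1}{\mtx{D}})$ and $\indnorm{\infty}{\mtx{M}} \le 1 - \tfrac{1}{n}(1 - \indnorm{\infty}{\mtx{D}})$, and AM--GM give $s_i \le (1 - \tfrac{1}{n}(1 - \delta))^i$ with $\delta \defby \tfrac{1}{2}(\indnorm{1}{\mtx{D}} + \indnorm{\infty}{\mtx{D}})$, whence $\sum_{i \ge 0} s_i \le n/(1 - \delta) = nb$. \lemref{conditional-variance-bound} then yields $\mtx{V}^{\mtx{K}} \psdle \tfrac{1}{2}(nb)^2 \cdot \tfrac{1}{n} \sum_j \mtx{A}_j^2 \psdle \tfrac{1}{2} n b^2 \sigma^2\,\Id = s\,(c\,\mtx{X} + v\,\Id)$, and \thmref{concentration-bdd} delivers both the tail bound $d\,\econst^{-t^2/(b\sigma^2)}$ and the expectation bound $\sigma\sqrt{b\log d}$.

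I expect the main obstacle to be the coupling bookkeeping. The greedy coupling, which is precisely what makes the Dobrushin recursion run, is \emph{not} a kernel coupling in the sense of \defref{kernel-coupling}: each chain's update there depends on the other chain's current state, so the conditional-independence relations \eqref{eqn:kernel-coupling} genuinely fail for it. The argument must therefore route through the observation that $\Expect[\mtx{\Psi}(\Zvec_{(i)}) - \mtx{\Psi}(\Zvec'_{(i)}) \condl \Zvec, \Zvec']$ is independent of the chosen coupling, using the harmless product coupling to satisfy the letter of \defref{kernel-coupling} and the greedy coupling only for estimation. Secondary technical points are the existence and measurability of the conditional laws $\mu_i(\cdot \condl x_{-i})$ and of measurable maximal couplings on the Polish space $\metricspace$, and the careful passage from the entrywise recursion for the $q^{(i)}_k$ to a clean positive-semidefinite bound, where operator convexity of the matrix square is the right lever.
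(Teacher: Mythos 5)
Your proposal is correct and, in substance, it is the paper's own argument: the single-site Glauber exchangeable pair, a coordinatewise maximal (``greedy'') coupling driving the Dobrushin recursion $q^{(i)} \le \mtx{B}^i \onevct_J$ with $\mtx{B} = (1-\tfrac{1}{n})\Id + \tfrac{1}{n}\mtx{D}$, a telescoping decomposition plus operator convexity to verify hypothesis \eqref{eqn:K-term-bound} with $\mtx{\Gamma}(\Zvec) = \tfrac{1}{n}\sum_k \mtx{A}_k^2$ and $s_i = \big(\indnorm{1}{\mtx{B}}\,\indnorm{\infty}{\mtx{B}}\big)^{i/2}$, and then \lemref{conditional-variance-bound} and \thmref{concentration-bdd} with $c=0$, $v = b\sigma^2/2$, $s = nb$. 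The one place where you genuinely deviate is also the one place where you are mistaken on a point of fact: the greedy coupling \emph{does} satisfy \eqref{eqn:kernel-coupling}, and the paper uses it directly as the kernel coupling. In a maximal coupling the joint update law depends on both chains, but the conditional law of the first chain's refreshed coordinate given the entire history of both chains is exactly $\mu_{J_i}(\cdot \condl \Zvec_{(i-1),-J_i})$, a function of the first chain's state alone; by induction, the conditional law of $\Zvec_{(i)}$ given $(\Zvec_{(0)},\Zvec'_{(0)})$ is the $i$-step Glauber kernel started at $\Zvec_{(0)}$, which is precisely the conditional independence $Z_{(i)}\indep Z'_{(0)}\condl Z_{(0)}$ (and symmetrically for the primed chain). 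Consequently your product-coupling detour---together with the claim that the conditional expectation is invariant under ``any'' coupling, which as stated is too strong and holds only for couplings whose conditional marginals are correct, such as this one---is harmless but unnecessary. Two smaller bookkeeping points: the premise \eqref{eqn:coupling-premise} must be checked from an \emph{arbitrary} starting pair $(z,z')$, so run the same recursion with $q^{(0)}$ equal to the full disagreement indicator rather than $\onevct_J$ and use $\indnorm{\infty}{\mtx{B}}<1$; and your weighted-Jensen step $\big(\sum_k q_k\mtx{G}_k\big)^2 \psdle \big(\sum_l q_l\big)\sum_k q_k\mtx{A}_k^2$ is a cosmetic variant of the paper's pairwise use of \eqref{eqn:matrix-am-gm} and leads to the same bounds on the norms of $\mtx{B}^i$.
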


The proof of Corollary~\ref{cor:dob-bound-diff} appears below in Section~\ref{sec:dobrushin-proof}.  In Section~\ref{sec:ising}, we describe an application of the result to physical spin systems in Section~\ref{sec:ising}.  Observe that the bounds here are a factor of $b$ worse than the independent case outlined in Corollary~\ref{cor:bound-diff}.  

\subsection{Proof of Concentration under Dobrushin Assumptions}
\label{sec:dobrushin-proof}

The proof of Corollary~\ref{cor:dob-bound-diff} is longer than the argument behind Corollary~\ref{cor:bound-diff},
but it follows the same pattern.

\paragraph{Exchangeable Counterparts.}

Let $\mtx{X} = \mtx{H}(\Zvec) - \Expect\mtx{H}(\Zvec)$.  To begin,
we form exchangeable counterparts for the random input $\Zvec$
and the random matrix $\mtx{X}$.
\begin{equation*} 
\Zvec' \defby (Z_1, \dots, Z_{J-1}, \tilde{Z}_{J}, Z_{J+1}, \dots, Z_n)
\qtext{and}
\mtx{X}' \defby  \mtx{H}(\Zvec') - \Expect\mtx{H}(\Zvec)
\end{equation*}
where $J$ is an independent index drawn uniformly from $\{1,\dots,n\}$ and
$\tilde{Z}_i$ and $Z_i$ are conditionally i.i.d.\ given $Z_{-i}$ for each
index $i$.

\paragraph{A Kernel Coupling.}

Next, we construct a kernel coupling $(\Zvec_{(i)},\Zvec_{(i)}')_{i\geq 0}$
by adapting the proof of~\cite[Thm.~4.3]{Cha08:Concentration-Inequalities}.
For each $i \geq 1$,
we generate $(\Zvec_{(i)},\Zvec'_{(i)})$ from $(\Zvec_{(i-1)},\Zvec'_{(i-1)})$ by selecting an independent random index $J_i$ uniformly from $\{1,\dots,n\}$ and replacing the $J_i$-th coordinates of $\Zvec_{(i-1)}$ and $\Zvec'_{(i-1)}$ 
with $\tilde{Z}_{(i-1),J_i}$ and $\tilde{Z}'_{(i-1),J_i}$ respectively.
The replacement variables are sampled so that 
$$
Z_{(i-1),j} \indep \tilde{Z}_{(i-1),j} \condl Z_{(i-1),-j}
\quad\text{and}\quad 
Z'_{(i-1),j}\indep \tilde{Z}'_{(i-1),j} \condl Z'_{(i-1),-j}.
$$
We require that $\tilde{Z}_{(i-1),j}$ and $\tilde{Z}'_{(i-1),j}$ are maximally coupled, i.e.,
$$
\Prob{\tilde{Z}_{(i-1),j} \neq \tilde{Z}'_{(i-1),j}\condl \Zvec_{(i-1)}, \Zvec'_{(i-1)}} = \tv\big(\mu_j(\cdot \condl  \Zvec_{(i-1),-j}),\mu_j(\cdot \condl  \Zvec'_{(i-1),-j}) \big).
$$
By construction, the two marginal chains $(\Zvec_{(i)})_{i\geq 0}$ and $(\Zvec'_{(i)})_{i\geq 0}$ have the same the kernel as $(\Zvec,\Zvec')$, and they satisfy the kernel coupling property \eqref{eqn:kernel-coupling}.
Furthermore, the coupling boundedness criterion \eqref{eqn:coupling-premise} is met,
just as in the scalar setting~\cite[p. 78]{Cha08:Concentration-Inequalities}.
Lemma~\ref{lem:kernel-coupling} now implies that $(\mtx{X}, \mtx{X}')$ is a kernel Stein pair with kernel
$$
\mtx{K}(\vct{z},\vct{z}') \defby \sum\nolimits_{i=0}^\infty \Expect\big[\mtx{H}(\Zvec_{(i)}) - \mtx{H}(\Zvec'_{(i)})\condl \Zvec_{(0)}=\vct{z},\Zvec'_{(0)}=\vct{z}' \big].
$$

\paragraph{The Conditional Variances.}

With the kernel coupling established, we may proceed to analyze the conditional variances
$\mtx{V}_{\mtx{X}}$ and $\mtx{V}^{\mtx{K}}$.  First, we collect the information necessary to apply Lemma~\ref{lem:conditional-variance-bound}.
Fix an index $i \geq 0$, and write $\mtx{H}(\Zvec_{(i)}) - \mtx{H}(\Zvec'_{(i)})$ as a telescoping sum:
\begin{multline*}
\mtx{H}(\Zvec_{(i)}) - \mtx{H}(\Zvec'_{(i)})
= \sum_{j=1}^{n} \bigg[\mtx{H}\left(Z_{(i),1},\ldots,Z_{(i),j},Z'_{(i),j+1},\ldots,Z'_{(i),n}\right) \\
- \mtx{H}\left(Z_{(i),1},\ldots,Z_{(i),j-1},Z'_{(i),j},\ldots,Z'_{(i),n}\right) \bigg]
	=: \sum_{j=1}^{n} \mtx{W}_{(i),j}.
\end{multline*}
Introduce the event $\event_{(i),j}\defby \{Z_{(i),j}\ne Z'_{(i),j}\}$. 
Abbreviate $p_{(i),j} = \Prob{\event_{(i),j}\condl \Zvec,\Zvec'}$ and $\tilde{\mtx{W}}_{(i),j} = \Expect[\mtx{W}_{(i),j}\condl \Zvec,\Zvec',\event_{(i),j}] $.
Off of the event $\event_{(i),j}$, it holds that $\mtx{W}_{(i),j}=\zeromtx$. Therefore,
$$
\Expect[\mtx{W}_{(i),j} \condl \Zvec,\Zvec'] = \tilde{\mtx{W}}_{(i),j}\  p_{(i),j}.
$$
In \cite[pp. 77--78]{Cha08:Concentration-Inequalities}, Chatterjee established that, for each $i$ and $j$, 
\begin{align} \label{eqn:l-bound}
p_{(i),j} \le \onevct_j^*\mtx{B}^i \onevct_J 
\qtext{for} \mtx{B}:=\left(1-\frac{1}{n}\right)\Id+\frac{1}{n}\mtx{D}.
\end{align}
We use $\onevct_k$ to denote the $k$th standard basis vector, and $\mtx{B}^i$ refers to the $i$th power of the square, nonnegative matrix $\mtx{B}$.

To continue, make the calculation
\begin{align*}
\left(\sum\nolimits_{j=1}^{n} \Expect[\mtx{W}_{(i),j} \condl Z,Z'] \right)^2
&=\sum\nolimits_{j=1}^{n}\sum\nolimits_{k=1}^{n} \tilde{\mtx{W}}_{(i),j}\tilde{\mtx{W}}_{(i),k}\  p_{(i),j}p_{(i),k}\\
&\psdle\sum\nolimits_{1\le j,k\le n}{\frac{1}{2}(\tilde{\mtx{W}}_{(i),j}^2+\tilde{\mtx{W}}_{(i),k}^2)}{}\ p_{(i),j}p_{(i),k} \\
&\psdle \sum\nolimits_{1\le j,k\le n}\mtx{A}_k^2\cdot \onevct_j^* \mtx{B}^i \onevct_J \cdot\onevct_k^* \mtx{B}^i \onevct_J \\
&= \norm{\mtx{B}^i\onevct_J}_1 \cdot  \sum\nolimits_{k=1}^n\mtx{A}_k^2\cdot \onevct_k^* \mtx{B}^i \onevct_J  \\
&\psdle \indnorm{1}{\mtx{B}}^i \cdot \sum\nolimits_{k=1}^n\mtx{A}_k^2\cdot \onevct_k^* \mtx{B}^i \onevct_J .
\end{align*}
The first semidefinite inequality follows from~\eqref{eqn:matrix-am-gm}.  The second relation depends on~\eqref{eqn:l-bound}.  We reach the next identity by summing over $j$, noting that $\onevct_j^* \mtx{B}^i \onevct_J$ is nonnegative.  The last inequality follows from the definition~\eqref{eqn:induced-norm} of $\indnorm{1}{\cdot}$ and the
fact that this norm is submultiplicative.
Next, take the expectation of the latter display with respect to $J$.  We obtain
\begin{align*}
\Expect\left[\left.\left(\sum\nolimits_{j=1}^{n} \Expect[\mtx{W}_{(i),j} \condl \Zvec,\Zvec'] \right)^2\right|  Z\right]
	&\psdle \indnorm{1}{\mtx{B}}^i \cdot \frac{1}{n}\sum\nolimits_{1\leq j,k\leq n}\mtx{A}_k^2\cdot \onevct_k^* \mtx{B}^i \onevct_j  \\
	&= \indnorm{1}{\mtx{B}}^i \cdot \frac{1}{n}\sum\nolimits_{k=1}^n\mtx{A}_k^2 \cdot\norm{\onevct_k^* \mtx{B}^i}_1  \\
	&\psdle \indnorm{1}{\mtx{B}}^i \cdot \indnorm{\infty}{\mtx{B}}^i \cdot \frac{1}{n}\sum\nolimits_{k=1}^n \mtx{A}_k^2.
\end{align*}
The justifications are similar with those for the preceding calculation.

As a consequence of this bound, we are in a position to apply Lemma~\ref{lem:conditional-variance-bound}.  Set $\mtx{\Gamma}(\Zvec) = n^{-1} \sum_{k=1}^n \mtx{A}_k^2$ and
$s_i = \indnorm{1}{\mtx{B}}^{i/2} \indnorm{\infty}{\mtx{B}}^{i/2}$ for each $i \geq 0$.
The lemma delivers
\begin{align*}
\mtx{V}_{\mtx{X}} &\psdle \frac{1}{2}\mtx{\Gamma}(Z)\psdle \frac{\sigma^2}{2n} \cdot \Id,\quad\text{and} \\
\mtx{V}^{\mtx{K}} &\psdle \frac{1}{2}\left(\sum\nolimits_{i=0}^{\infty} s_i\right)^2 \mtx{\Gamma}(Z) \psdle \left(1-\sqrt{ \indnorm{\infty}{\mtx{B}}\indnorm{1}{\mtx{B}}}\right)^{-2} \frac{\sigma^2}{2n} \cdot \Id.
\end{align*}
where $\sigma^2$ is defined in the statement of Corollary~\ref{cor:dob-bound-diff}.
It remains to simplify the formula for the kernel conditional variance.

The definition of $\mtx{B}$ ensures that
$$
\indnorm{1}{\mtx{B}}=1-\frac{1}{n} \left(1- \indnorm{1}{\mtx{D}}\right)
\quad\text{and}\quad
\indnorm{\infty}{\mtx{B}}=1-\frac{1}{n}\left(1-\indnorm{\infty}{\mtx{D}}\right).
$$
As a consequence of the geometric--arithmetic mean inequality,
$$
1-\sqrt{\indnorm{1}{\mtx{B}} \indnorm{\infty}{\mtx{B}}}
\geq \frac{1}{n} \left[1-\frac{1}{2} \big(\indnorm{1}{\mtx{D}}+\indnorm{\infty}{\mtx{D}} \big) \right].
$$
We conclude that
$$
\mtx{V}^{\mtx{K}}\psdle \left[1-\frac{1}{2}\big(\indnorm{1}{\mtx{D}}+\indnorm{\infty}{\mtx{D}}\big)\right]^{-2} \cdot \frac{n \sigma^2}{2} \cdot \Id
	= \frac{n b^2 \sigma^2}{2} \cdot \Id,
$$
where $b$ is defined in the statement of Corollary~\ref{cor:dob-bound-diff}.

Finally, we invoke \thmref{concentration-bdd} with $c=0$ and $v=b\sigma^2/2$ and $s = nb$ to obtain the advertised conclusions.

\section{Application: Correlation in the 2D Ising Model} \label{sec:ising}

In this section, we apply the dependent matrix bounded differences inequality of Corollary~\ref{cor:dob-bound-diff} to study correlations in a simple spin system.
Consider the 2D Ising model without an external field on an $n \times n$ square lattice with a periodic boundary.  Let $\vct{\sigma} := (\sigma_{ij} : 1 \leq i, j \leq n)$ be an array of random spins taking values in $\{+1, -1\}$.  To simplify the discussion, we treat array indices periodically, so we interpret the index $i$ to mean $((i-1) \bmod n) + 1$.  We also write $(i, j) \sim  (k, l)$ to indicate that the vertices are neighbors in the periodic square lattice; that is, $k = i \pm 1$ and $l = j$ or else $k = i$ and $l = j \pm 1$.  With this notation, the Hamiltonian may be expressed as
$$
H(\vct{\sigma}) = \sum_{(i,j)\sim(k,l)} \sigma_{ij} \, \sigma_{kl},
$$
where the sum occurs over distinct pairs of neighboring vertices.  We assign a probability distribution to the array $\vct{\sigma}$ of spins:
\begin{equation} \label{probdef}
\Prob{ \vct{\sigma}} = \frac{1}{A} \exp\big( \beta \, H(\vct{\sigma}) \big),
\end{equation}
where $A = \sum_{\vct{\sigma'}} \exp\big( \beta \, H(\vct{\sigma}')\big)$ denotes the normalizing constant (also known as the partition function).  This model has been studied extensively, and it is known to exhibit a phase transition at $\beta_c = \tfrac{1}{2} \log(1 + \sqrt{2})$.  For example, see~\cite{Onsager}.

Fix a positive number $d \leq n$.  For indices $1 \leq i, j \leq d$, we define the spin--spin correlation function as
$$
c_{ij} = \Expect[ \sigma_{11} \, \sigma_{ij} ].
$$
We write $\mtx{C}$ for the $d\times d$ matrix whose entries are $c_{ij}$.  The paper~\cite{wu1976spin} of Wu offers an explicit expression for the correlations in the limit as the size $n$ of the lattice tends to infinity.  In particular, in the high-temperature regime $\beta < \beta_c$, the correlations decay exponentially.  On the other hand, this is a limiting result and there is no analytic formula for finite lattices.

One may wish to estimate the spin--spin correlation matrix from a sampled value $\vct{\sigma}$ of the spins.  We propose the estimator
\begin{equation} \label{hatcdef}
\widehat{C}_{ij} := \frac{1}{n^2} \sum_{1 \leq k, l \leq n}
	\sigma_{kl} \cdot \sigma_{k+i-1,l+j-1}
	\quad\text{for $1\leq i,j\leq d$.}
\end{equation}
The mean of $\widehat{\mtx{C}}$ is the spin--spin correlation matrix $\mtx{C}$, so it is natural to wonder about the deviations of the estimator from its mean value.  We can use the concentration results from the previous section to quantify these fluctuations.

\subsection{Concentration for General Matrices}

Since the estimator $\widehat{\mtx{C}}$ need not be Hermitian, we need a way to extend our techniques to general matrices.  We employ a well-known device from operator theory, called the \term{Hermitian dilation}~\cite[Sec.~2.6]{Tro11:User-Friendly-FOCM}.

\begin{defn}[Hermitian dilation]
Consider a matrix $\mtx{B}\in \C^{d_1\times d_2}$, and set $d=d_1+d_2$.
The \emph{Hermitian dilation} of $\mtx{B}$ is the matrix
\begin{equation*}
\D(\mtx{B}):=\begin{bmatrix} \zeromtx & \mtx{B}\\ \mtx{B^*} & \zeromtx\end{bmatrix} \in \Sym{d}.
\end{equation*}
\end{defn}

\noindent
The dilation preserves spectral properties in the sense that
$\lambda_{\max}(\D(\mtx{B})) = \norm{\D(\mtx{B})} = \norm{\mtx{B}}$.
Therefore,
\begin{equation} \label{dilationeq}
\Prob{ \smnorm{}{\widehat{\mtx{C}} - \mtx{C}} \geq t }
	= \Prob{ \lambda_{\max}\big( \D(\widehat{\mtx{C}}) - \D(\mtx{C}) \big) \geq t }.
\end{equation}
Using this observation, we can obtain a tail bound for the spectral-norm error in the estimator $\widehat{\mtx{C}}$ by studying its dilation.

\subsection{Bounding the Dobrushin Coefficients}

To apply Corollary~\ref{cor:dob-bound-diff}, we need to bound the Dobrushin coefficients of the array $\vct{\sigma}$ of spins.  Let $\vct{\sigma}' \in \{ \pm 1 \}^{n\times n}$ be a second independent draw from the Ising model.  Extending our notation from before, we write $\mu_{ij}(\cdot \condl \vct{\sigma}_{-(i,j)})$ for the conditional distribution of $\sigma_{ij}$ given the remaining variables.  In our setting,
\begin{multline} \label{eqn:tv-ising}
\tv\big( \mu_{ij}(\cdot \condl \vct{\sigma}_{-(i,j)}), \
	\mu_{ij}(\cdot \condl \vct{\sigma}'_{-(i,j)}) \big) \\
	= \abs{ \Prob{ \sigma_{ij} = 1 \,\bigg\vert\, \sum\nolimits_{(k,l): (i,j) \sim (k,l)} \sigma_{kl} } -
	\Prob{ \sigma'_{ij} = 1 \,\bigg\vert\, \sum\nolimits_{(k,l): (i,j) \sim (k,l)} \sigma'_{kl} } }.
\end{multline}
It follows from~\eqref{probdef} that
$$
\Prob{ \sigma_{ij} = 1 \,\bigg\vert\, \sum\nolimits_{(k,l) : (i,j) \sim (k,l)} \sigma_{kl} = s}
	= \frac{\exp(s\beta)}{\exp(s\beta) + \exp(-s\beta)} = \frac{1}{1 + \exp(-2s\beta)}
$$
for each possible value $s \in \{-4,-2,0,2,4\}$.  Therefore, the expression~\eqref{eqn:tv-ising} admits the upper bound
$$
\frac{1}{1 + \exp(-4\beta)} - \frac{1}{2}
$$
when $\vct{\sigma}$ and $\vct{\sigma}'$ differ in a single coordinate.  We may select the Dobrushin interdependence matrix
$$
D_{(i,j),(k,l)} = \begin{cases}
	(1 + \exp(-4\beta))^{-1} - \tfrac{1}{2}, & \text{when} (i,j)\sim(k,l) \\
	0, & \text{otherwise}.
\end{cases}
$$
This matrix satisfies the Dobrushin condition~\eqref{eqn:dobrushin}.  By direct computation,
\begin{equation} \label{Dbound}
\max\big\{ \indnorm{1}{\mtx{D}}, \ \indnorm{\infty}{\mtx{D}} \big\}
	\leq \frac{4}{1 - \exp(-4\beta)} - 2.
\end{equation}
because every vertex has four neighbors.  The right-hand side of~\eqref{Dbound} is smaller than one precisely when $\beta < \beta_D = \tfrac{1}{4} \log(3)$.  Since $\beta_D < \beta_c$, the hypotheses of Corollary~\ref{cor:dob-bound-diff} are satisfied for only part of the high-temperature regime.

\subsection{Tail Bound for the Estimator}

We intend to apply Corollary~\ref{cor:dob-bound-diff} to the Hermitian matrix $\D(\widehat{\mtx{C}})$.  For each index $1 \leq i,j \leq n$, write $\widehat{\mtx{C}}^{ij}$ for the value of $\widehat{\mtx{C}}$ when the sign of $\sigma_{ij}$ is flipped.  From~\eqref{hatcdef}, we have the inequalities
$$
\abs{ \widehat{C}_{kl} - \widehat{C}_{kl}^{ij} } \leq\frac{4}{n^2}
\quad\text{for $1 \leq k, l \leq d$.}
$$
As a consequence, we reach the semidefinite relation
$$
\big( \D(\widehat{\mtx{C}}) - \D(\widehat{\mtx{C}}_{ij}) \big)^2
	\psdle \frac{16 d^2}{n^4} \Id.
$$
Summing over all vertices in the lattice, we obtain an inequality for the boundedness parameter
$$
\sigma^2 = \norm{ \sum\nolimits_{1\leq i,j\leq n} \frac{16d^2}{n^4} \cdot \Id }
	= \frac{16d^2}{n^2}.
$$
For $\beta < \beta_D$, Corollary~\ref{cor:dob-bound-diff} implies that
$$
\Prob{ \smnorm{}{\widehat{\mtx{C}} - \mtx{C}} \geq t }
	\leq 2d \cdot \exp\left( \frac{-t^2}{3 - 4 (1 + \exp(-4\beta))^{-1}} \cdot
	\frac{n^2}{16d^2} \right).
$$
Therefore, the typical deviation $\Expect \smnorm{}{\widehat{\mtx{C}} - \mtx{C}}$
has order $(d\sqrt{\log d})/n$.  Therefore, in the regime where $\beta < \beta_D$, one sample suffices to obtain an accurate estimate of the spin--spin correlation matrix $\mtx{C}$, provided that $n \gg d$.

\section{Complements} \label{sec:beyond}

The tools of Section~\ref{sec:concentration-bdd} are applicable in a wide variety of settings.  To indicate what might be possible, we briefly present another packaged concentration result.  We also indicate some prospects for future research.

\subsection{Matrix-Valued Functions of Haar Random Elements}

This section describes a concentration result for a matrix-valued function of a random element drawn uniformly from a compact group.  This corollary can be viewed as a matrix extension of~\cite[Thm.~4.6]{Cha08:Concentration-Inequalities}.  We provide the proof in Appendix~\ref{sec:haar}.

\begin{cor}[Concentration for Hermitian Functions of Haar Measures] \label{cor:concentration-haar}
	Let $Z\sim \mu$ be Haar distributed on a compact topological group $G$, and 
	let $\mtx{\Psi} : G \to \Sym{d}$ be a measurable function satisfying $\Expect{\mtx{\Psi}(Z)} = \mtx{0}$.
	Let $Y, Y_1, Y_2, \dots$ be i.i.d.~random variables in $G$ satisfying 
	\begin{align} \label{eqn:Y-property}
	Y =_d Y^{-1} \qtext{and} zYz^{-1} =_d Y \qtext{for all } z \in G.
	\end{align}
	Compute the boundedness parameter
	$$
	\sigma^2 \defby \frac{S^2}{2}\sum\nolimits_{i=0}^\infty \min\big\{1,\ 4RS^{-1} \tv(\mu_i, \mu) \big\}
	$$
	where $\mu_i$ is the distribution of the product ${Y}_i\cdots {Y}_1$,
	$$
	\norm{\mtx{\Psi}(z)} \leq R \qtext{for all} z\in G, \qtext{and} S^2 = \sup_{g \in G}\norm{\Expect\big[(\mtx{\Psi}(g) - \mtx{\Psi}(Yg))^2 \big]}.
	$$
	Then, for all $t\geq 0$, 
	$$
	\Prob{ \lambda_{\max}\left( \mtx{\Psi}(Z) \right) \geq t }
		\leq d \cdot \econst^{-t^2/(2\sigma^2)}.
	$$
	Furthermore,
	$$
	\Expect \lambda_{\max}\left( \mtx{\Psi}(Z) \right) \leq \sigma \sqrt{2\log d}.
	$$ 
\end{cor}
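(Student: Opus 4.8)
The strategy is to realize $\mtx{X} \defby \mtx{\Psi}(Z)$ as one member of a kernel Stein pair arising from a random-walk kernel coupling, and then to apply \thmref{concentration-bdd} with $c=0$. For the exchangeable counterpart I would take $Z' \defby Y Z$, with $Y$ independent of $Z$. Because Haar measure is left-invariant and $Y =_d Y^{-1}$, the pair $(Z, YZ)$ is exchangeable --- equivalently, the transition rule $Pf(z) = \Expect[f(Yz)]$ is reversible with respect to $\mu$. A kernel coupling is obtained by driving two chains, with arbitrary initial values, using the \emph{same} i.i.d.\ increments $Y_1, Y_2, \dots$: writing $W_i \defby Y_i \cdots Y_1$ so that $W_i \sim \mu_i$, set $Z_{(i)} \defby W_i Z_{(0)}$ and $Z'_{(i)} \defby W_i Z'_{(0)}$. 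Each marginal evolves under $P$, and the conditional-independence requirement~\eqref{eqn:kernel-coupling} is immediate since $W_i$ is independent of the starting pair. Assuming $\sigma^2 < \infty$ (the case $\sigma^2 = \infty$ is vacuous), the series $\sum_i \tv(\mu_i,\mu)$ converges, so the boundedness premise~\eqref{eqn:coupling-premise} holds, and \lemref{kernel-coupling}, applied to the chains started at $(Z_{(0)},Z'_{(0)}) = (Z,YZ)$, produces a $\mtx{K}$-Stein pair $(\mtx{\Psi}(Z), \mtx{\Psi}(YZ))$ with kernel~\eqref{eqn:K-construct}.

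The heart of the matter is to control, for each $i \geq 0$, the Hermitian matrix $\mtx{F}_i \defby \Expect[\mtx{\Psi}(Z_{(i)}) - \mtx{\Psi}(Z'_{(i)}) \condl Z, Z']$ in the manner required by \lemref{conditional-variance-bound}, via two complementary estimates. First, a deterministic norm bound: conditioning on $(Z,Z')$ leaves only the average over $W_i \sim \mu_i$, and since $\int \mtx{\Psi}(w z)\idiff{\mu(w)} = \zeromtx$ for every $z$ --- this is $\Expect\mtx{\Psi}(Z) = \zeromtx$ together with the right-invariance of Haar measure --- each of the two terms defining $\mtx{F}_i$ equals an integral of a matrix of norm at most $R$ against the signed measure $\mu_i - \mu$; hence $\norm{\mtx{F}_i} \leq 4R\,\tv(\mu_i,\mu)$, so that $\mtx{F}_i^2 \psdle 16 R^2\,\tv(\mu_i,\mu)^2 \Id$. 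Second, an averaged bound exploiting the symmetry of $Y$: write $Z'_{(i)} = (W_i Y W_i^{-1})(W_i Z)$, and observe that $W_i Y W_i^{-1} =_d Y$ conditionally on $W_i$ by the hypothesis $zYz^{-1} =_d Y$. Then the operator convexity of the matrix square~\eqref{eqn:square-convex} and the definition of $S$ give
\begin{align*}
\Expect[\mtx{F}_i^2 \condl Z]
	&\psdle \Expect[ (\mtx{\Psi}(W_i Z) - \mtx{\Psi}(W_i Y Z))^2 \condl Z] \\
	&\psdle S^2 \, \Id ,
\end{align*}
where the last step conditions additionally on $W_i$, holds the element $W_i Z$ fixed, and applies the conjugation invariance together with the supremum in the definition of $S$. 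Combining the two estimates,
\begin{equation*}
\Expect[\mtx{F}_i^2 \condl Z] \psdle s_i^2 \, \Id, \qquad
s_i \defby \min\{\, S,\ 4R\,\tv(\mu_i,\mu) \,\} = S\min\{\,1,\ 4R S^{-1}\tv(\mu_i,\mu)\,\}.
\end{equation*}

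It remains to assemble everything. Apply \lemref{conditional-variance-bound} with $\mtx{\Gamma} \equiv \Id$ and the weights $s_i$ just obtained. Since $S \leq 2R$ (because $\norm{\mtx{\Psi}(g) - \mtx{\Psi}(Yg)} \leq 2R$) and $\tv(\mu_0,\mu) \geq \tfrac12$ --- unless $G$ is trivial, in which case $\mtx{\Psi}(Z) = \zeromtx$ and the claim is immediate --- we get $s_0 = S$, whence $\mtx{V}_{\mtx{X}} \psdle \tfrac12 S^2 \Id$; moreover $\sum_{i \geq 0} s_i = 2\sigma^2/S$ straight from the definition of $\sigma^2$, so $\mtx{V}^{\mtx{K}} \psdle \tfrac12 (2\sigma^2/S)^2 \Id = (2\sigma^4/S^2)\,\Id$. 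Now invoke \thmref{concentration-bdd} with $c = 0$, $v = \sigma^2$, and $s = 2\sigma^2/S^2$: both relations in~\eqref{eqn:comparison} hold (in fact with equality), and the advertised bounds $\Prob{\lambda_{\max}(\mtx{\Psi}(Z)) \geq t} \leq d\,\econst^{-t^2/(2\sigma^2)}$ and $\Expect\lambda_{\max}(\mtx{\Psi}(Z)) \leq \sigma\sqrt{2\log d}$ follow at once.

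The main obstacle is the second estimate on $\mtx{F}_i$. One has to recognize that conjugating by $W_i$ reduces the pair $(Z_{(i)}, Z'_{(i)})$ to a one-step increment whose conditional square is governed by $S^2$, and --- equally important --- one must combine the averaged $S^2$-bound with the deterministic total-variation bound \emph{at the level of $\mtx{F}_i$ itself}, so that the effective weight becomes $\min\{S, 4R\,\tv(\mu_i,\mu)\}$. Combining only after passing to $\Expect[\mtx{F}_i^2]$ --- for instance through an arithmetic--geometric-mean step --- would degrade $\sigma^2$ by replacing each summand $\min\{1, 4R S^{-1}\tv(\mu_i,\mu)\}$ with its square root, a strictly weaker quantity.
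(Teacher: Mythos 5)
Your proposal is correct and follows essentially the same route as the paper's Appendix on Haar measures: the same exchangeable pair $(Z,YZ)$ and common-increment kernel coupling, the same two estimates on $\Expect[\mtx{\Psi}(Z_{(i)})-\mtx{\Psi}(Z'_{(i)})\condl Z,Z']$ (the $4R\,\tv(\mu_i,\mu)$ norm bound via bi-invariance of Haar measure and the $S^2$ bound via conjugation invariance), combined through Lemma~\ref{lem:conditional-variance-bound} with $\mtx{\Gamma}\equiv\Id$ and $s_i=\min\{S,4R\,\tv(\mu_i,\mu)\}$. Your final invocation of Theorem~\ref{thm:concentration-bdd} with $c=0$, $v=\sigma^2$, $s=2\sigma^2/S^2$ coincides with the paper's choice, so no gap remains.
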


\cororef{concentration-haar} relates the concentration of Hermitian functions to the convergence of random walks on a group.
Since representation theory leads to a matrix model of compact groups, it is often natural to build random matrices from a group representation.
In particular, \cororef{concentration-haar} can be used to study matrices constructed from random permutations or random unitary matrices.
We omit the details.

\subsection{Conjectures and Consequences}\label{sec:conjectures}

Lugosi et al.~\cite{LugosiSelfBounding} study a class of self-bounding (scalar) functions, which arise in applications in statistics and learning theory.  They use log-Sobolev inequalities to obtain information about the concentration properties of these functions.  It is also possible to perform the analysis using the method of exchangeable pairs.

Let us introduce the matrix analog of a self-bounding function.

\begin{defn}[Self-bounding Matrix Function] \label{def:self-bounding}
A function $\mtx{H} : \metricspace\to\Sym{d}$ is called \term{$(a,b)$ matrix self-bounding} if, for any $\Zvec, \Zvec' \in \metricspace$,
\begin{enumerate}
\item $\mtx{H}(\Zvec)-\mtx{H}(z_1,\ldots,z_i',\ldots,z_n)\psdle \Id$, and
\item $\sum_{i=1}^n(\mtx{H}(\Zvec)-\mtx{H}(z_1,\ldots,z_i',\ldots,z_n))_+\psdle a \mtx{H}(\Zvec)+b \Id$.
\end{enumerate}
$\mtx{H}$ is \term{weakly $(a,b)$ matrix self-bounding} if, for any $\Zvec,\Zvec'\in\metricspace$, 
$$
\sum\nolimits_{i=1}^n(\mtx{H}(\Zvec)-\mtx{H}(z_1,\ldots,z_i',\ldots,z_n))_+^2\psdle a \mtx{H}(\Zvec)+ b \Id.
$$
\end{defn}

\noindent
Mackey~\cite[Thm.~25]{Mackeythesis} proposed a slightly different definition that includes an additional self-reciprocity condition.  His analysis requires this extra hypothesis because it is based on matrix Stein pairs.

The approach in this paper is not quite strong enough to develop concentration inequalities for self-bounding matrix functions.  Our techniques would work if the following mean value trace inequality were valid.

\begin{conjecture}[Signed Mean Value Trace Inequalities] \label{conj:mvti}
For all matrices $\mtx{A}, \mtx{B}, \mtx{C} \in \Sym{d}$, all positive integers $q$,  and any $s > 0$ it holds that
\begin{align*}
\trace \big[\mtx{C} (\econst^{\mtx{A}} - \econst^{\mtx{B}} )\big] \leq  
	\frac{1}{2} \trace\big[(s(\mtx{A}-\mtx{B})_+^2+s^{-1} \, \mtx{C}_+^2)
	\econst^{\mtx{A}}+(s(\mtx{A}-\mtx{B})_-^2+s^{-1} \mtx{C}_-^2)\econst^{\mtx{B}}) \big] .
\end{align*}
and
\begin{align*}
\trace \big[\mtx{C} (\mtx{A}^{q} - \mtx{B}^{q} )\big] \leq  
	\frac{q}{2} \big[(s(\mtx{A}-\mtx{B})_+^2+s^{-1}\mtx{C}_+^2)\abs{\mtx{A}}^{q-1} + (s(\mtx{A}-\mtx{B})_-^2+s^{-1}\mtx{C}_-^2)\abs{\mtx{B}}^{q-1}) \big].
\end{align*}
\end{conjecture}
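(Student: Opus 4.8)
We would attempt \conjref{mvti} by running the same argument that proves the unsigned inequalities \lemref{emvti} and \lemref{pmvti}, but tracking the signs of $\mtx{A}-\mtx{B}$ and $\mtx{C}$ throughout. The guiding model is the scalar case, which is immediate from the mean value theorem: $c(\econst^a-\econst^b)=c(a-b)\,\econst^{\xi}$ for some $\xi$ between $a$ and $b$, so when $a\ge b$ one has $\econst^{\xi}\le\econst^{a}$ and the quantity is controlled by $\tfrac12(s(a-b)_+^2+s^{-1}c_+^2)\econst^a$ via weighted AM--GM (and is nonpositive when $c\le 0$), while when $a<b$ the symmetric estimate uses $\econst^{\xi}\le\econst^{b}$ together with $(a-b)_-$ and $c_-$. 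Diagonalizing a commuting triple $(\mtx{A},\mtx{B},\mtx{C})$ reduces to exactly this, so \conjref{mvti} holds whenever the three matrices commute; the whole content of the conjecture lies in the genuinely noncommutative case.

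First I would record the representations of the endpoints that the unsigned proofs use, namely
$$
\econst^{\mtx{A}}-\econst^{\mtx{B}}=\int_0^1 \econst^{\tau\mtx{A}}(\mtx{A}-\mtx{B})\econst^{(1-\tau)\mtx{B}}\idiff{\tau}
\qtext{and}
\mtx{A}^q-\mtx{B}^q=\sum\nolimits_{k=0}^{q-1}\mtx{A}^k(\mtx{A}-\mtx{B})\mtx{B}^{q-1-k},
$$
together with their reversed counterparts (powers of $\mtx{B}$ on the left). Applying $\trace[\mtx{C}\,\cdot\,]$ and using cyclicity reduces the problem to bounding terms of the shape $\trace[\mtx{C}\,\mtx{P}\,(\mtx{A}-\mtx{B})\,\mtx{Q}]$, where $\mtx{P},\mtx{Q}$ are positive-semidefinite products of exponentials or powers of $\mtx{A}$ and $\mtx{B}$. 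Next I would split $\mtx{A}-\mtx{B}=(\mtx{A}-\mtx{B})_+-(\mtx{A}-\mtx{B})_-$ and $\mtx{C}=\mtx{C}_+-\mtx{C}_-$, and on each of the resulting pieces invoke the trace Cauchy--Schwarz inequality associated with the semidefinite form $(\mtx{X},\mtx{Y})\mapsto\trace[\mtx{X}^{\adj}\mtx{M}\mtx{Y}\mtx{N}]$ --- positive because $\trace[\mtx{X}^{\adj}\mtx{M}\mtx{X}\mtx{N}]=\trace[(\mtx{M}^{1/2}\mtx{X}\mtx{N}^{1/2})^{\adj}(\mtx{M}^{1/2}\mtx{X}\mtx{N}^{1/2})]\ge 0$ whenever $\mtx{M},\mtx{N}\psdge\zeromtx$ --- taking $\mtx{X}$ and $\mtx{Y}$ to be square roots of the $\pm$-parts. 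This decouples the $(\mtx{A}-\mtx{B})$-factor from the $\mtx{C}$-factor, and a final weighted AM--GM step produces the desired combinations $s(\mtx{A}-\mtx{B})_\pm^2+s^{-1}\mtx{C}_\pm^2$.

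The step I expect to be the real obstacle --- and the reason the statement is a conjecture rather than a lemma --- is matching the \emph{endpoint} of the interpolation to the \emph{sign}: the plus pieces must ultimately be weighted by $\econst^{\mtx{A}}$ (respectively $\abs{\mtx{A}}^{q-1}$) alone and the minus pieces by $\econst^{\mtx{B}}$ (respectively $\abs{\mtx{B}}^{q-1}$) alone. In the commuting case this is precisely the scalar fact that $\econst^{\xi}$ lies between $\econst^a$ and $\econst^b$; noncommutatively, the one-sided representations above unavoidably involve \emph{both} $\econst^{\mtx{A}}$ and $\econst^{\mtx{B}}$, and the crude estimate $\econst^{\mtx{M}_\tau}\psdle\econst^{\mtx{A}}$ along the segment $\mtx{M}_\tau=\mtx{B}+\tau(\mtx{A}-\mtx{B})$ simply fails. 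I see three plausible lines of attack: (i) a double operator integral representation of $\trace[\mtx{C}(f(\mtx{A})-f(\mtx{B}))]$ whose kernel is the divided difference $f^{[1]}(\lambda,\mu)=\frac{f(\lambda)-f(\mu)}{\lambda-\mu}$, trying to dominate $f^{[1]}(\lambda,\mu)$ pointwise by $\tfrac12\bigl(f(\lambda)\,\II[\lambda\ge\mu]+f(\mu)\,\II[\lambda<\mu]\bigr)$-type quantities and then pushing this through the trace while respecting the $\pm$ split; (ii) a perturbation argument starting from the trivial commuting case and controlling the error as $\mtx{A}$ and $\mtx{B}$ are rotated apart; and (iii) verifying the small cases $q=1,2,3$ by hand --- for $q=1$ the bound is immediate from $\mtx{X}_+^2+\mtx{X}_-^2=\mtx{X}^2$ and plain matrix AM--GM --- to test whether the constant $\tfrac12$ is even sharp. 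If the clean form resists all of these, the pragmatic fallback, entirely sufficient for the self-bounding application in \defref{self-bounding}, is to accept a worse absolute constant, exactly as \lemref{emvti} and \lemref{pmvti} replace the naive $\tfrac12$ by $\tfrac14$.
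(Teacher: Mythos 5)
You have read the situation correctly: the paper states \conjref{mvti} as an open conjecture (``Extensive simulations\dots suggest that Conjecture~\ref{conj:mvti} holds, but we did not find a proof''), so there is no proof of record to compare your approach against. Your diagnosis of why the unsigned arguments behind Lemmas~\ref{lem:emvti} and~\ref{lem:pmvti} do not adapt is also the right one: the one-sided interpolants $\econst^{\tau\mtx{A}}(\mtx{A}-\mtx{B})\econst^{(1-\tau)\mtx{B}}$ and $\mtx{A}^{k}(\mtx{A}-\mtx{B})\mtx{B}^{q-1-k}$ inextricably involve both endpoints, the ordering $\econst^{\mtx{M}_\tau}\psdle\econst^{\mtx{A}}$ fails along the segment from $\mtx{B}$ to $\mtx{A}$ for noncommuting matrices, and the paper's proofs sidestep this precisely by symmetrizing over $\econst^{\mtx{A}}+\econst^{\mtx{B}}$ (equivalently $\abs{\mtx{A}}^{q-1}+\abs{\mtx{B}}^{q-1}$), which destroys the $(\cdot)_\pm$ structure. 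Your $q=1$ check is correct: $\abs{\mtx{A}}^0=\abs{\mtx{B}}^0=\Id$ and $\mtx{X}_+^2+\mtx{X}_-^2=\mtx{X}^2$ collapse the right side to $\tfrac12\trace[s(\mtx{A}-\mtx{B})^2+s^{-1}\mtx{C}^2]$, which dominates $\trace[\mtx{C}(\mtx{A}-\mtx{B})]$ by the arithmetic--geometric mean bound~\eqref{eqn:matrix-am-gm}.

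One caveat about your closing ``pragmatic fallback.'' Lemmas~\ref{lem:emvti} and~\ref{lem:pmvti} are \emph{unsigned} estimates in $(\mtx{A}-\mtx{B})^2$ and $\mtx{C}^2$, and their constant $1/4$ is an \emph{improvement} on the naive $1/2$ (because it spreads over the sum $\econst^{\mtx{A}}+\econst^{\mtx{B}}$), not a degradation. So the gap between what is proved and what is conjectured is not a constant: it is the $(\cdot)_\pm$ split itself, which is exactly what the self-bounding program of \defref{self-bounding} requires in order to land a bound of the form $\mtx{V}_{\mtx{X}}\psdle a\mtx{H}(\Zvec)+b\,\Id$. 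A signed inequality with any finite constant, worse or not, is not established anywhere in the paper, so ``accept a worse constant'' is a restatement of the conjecture rather than a soft landing.
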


\noindent
This statement involves the standard matrix functions that lift the scalar functions $x_+ := \max\{ x, 0 \}$ and $x_- := \max\{-a, 0\}$.  Extensive simulations with random matrices suggest that Conjecture~\ref{conj:mvti} holds, but we did not find a proof.

\section*{Acknowledgements}\label{sec:acknowledgements}
Paulin thanks his thesis advisors, Louis Chen and Adrian R\"ollin, for their helpful comments on this manuscript.
Tropp was supported by ONR awards N00014-08-1-0883 and N00014-11-1002, AFOSR award FA9550-09-1-0643, and a Sloan Research Fellowship.

\appendix
\section{Operator Inequalities} \label{sec:operator-ineq}

Our main results rely on some basic inequalities from operator theory.
We are not aware of good references for this material, so we have
included short proofs.

\subsection{Young's Inequality for Commuting Operators}

In the scalar setting, Young's inequality provides an additive
bound for the product of two numbers.  More precisely,
for indices $p, q \in (1, \infty)$ that satisfy the
conjugacy relation $p^{-1} + q^{-1} = 1$, we have
\begin{equation} \label{eqn:young-scalar}
ab \leq \frac{1}{p} \abs{a}^p + \frac{1}{q} \abs{b}^q
\quad\text{for all $a, b \in \R$.}
\end{equation}
The same result has a natural extension for commuting operators.

\begin{lemma}[Young's Inequality for Commuting Operators]
\label{lem:young-commute}
Suppose that $\mathcal{A}$ and $\mathcal{B}$ are self-adjoint linear maps on
the Hilbert space $\M^d$ that commute with each other.
Let $p, q \in (1, \infty)$ satisfy the conjugacy relation
$p^{-1} + q^{-1} = 1$.  Then
$$
\mathcal{A}\mathcal{B}
	\psdle \frac{1}{p} \abs{\mathcal{A}}^p + \frac{1}{q} \abs{\mathcal{B}}^q.
$$
\end{lemma}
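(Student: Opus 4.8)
The plan is to reduce the operator inequality to the scalar Young inequality~\eqref{eqn:young-scalar} by simultaneously diagonalizing $\mathcal{A}$ and $\mathcal{B}$.  Recall that $\M^d$, equipped with the trace inner product, is a finite-dimensional complex Hilbert space (of dimension $d^2$), and $\mathcal{A}, \mathcal{B}$ are commuting self-adjoint operators on it.  By the spectral theorem for a commuting family of self-adjoint operators on a finite-dimensional Hilbert space, there is an orthonormal basis $\{\vct{v}_k\}_{k=1}^{d^2}$ of $\M^d$ together with real numbers $\{a_k\}$ and $\{b_k\}$ such that $\mathcal{A}\vct{v}_k = a_k \vct{v}_k$ and $\mathcal{B}\vct{v}_k = b_k \vct{v}_k$ for each $k$.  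In this basis, the functional calculus acts diagonally: $\abs{\mathcal{A}}^p \vct{v}_k = \abs{a_k}^p \vct{v}_k$ and $\abs{\mathcal{B}}^q \vct{v}_k = \abs{b_k}^q \vct{v}_k$, while $\mathcal{A}\mathcal{B}\,\vct{v}_k = a_k b_k\, \vct{v}_k$.

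Next I would verify the quadratic forms directly.  Fix an arbitrary $\vct{w} \in \M^d$ and expand $\vct{w} = \sum_k c_k \vct{v}_k$.  Using orthonormality,
$$
\ip{\vct{w}}{\mathcal{A}\mathcal{B}\,\vct{w}} = \sum\nolimits_k a_k b_k \abs{c_k}^2 \leq \sum\nolimits_k \Big( \tfrac{1}{p}\abs{a_k}^p + \tfrac{1}{q}\abs{b_k}^q \Big)\abs{c_k}^2 = \ip{\vct{w}}{\big( \tfrac{1}{p}\abs{\mathcal{A}}^p + \tfrac{1}{q}\abs{\mathcal{B}}^q \big)\vct{w}},
$$
where the inequality applies the scalar bound~\eqref{eqn:young-scalar} to each pair $(a_k, b_k)$ and weights by $\abs{c_k}^2 \geq 0$.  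Since $\mathcal{A}\mathcal{B}$ is itself self-adjoint (it is the product of commuting self-adjoint maps, so $(\mathcal{A}\mathcal{B})^* = \mathcal{B}^*\mathcal{A}^* = \mathcal{B}\mathcal{A} = \mathcal{A}\mathcal{B}$), both sides of the claimed relation are self-adjoint operators, and the inequality of quadratic forms for all $\vct{w}$ is precisely the semidefinite inequality $\mathcal{A}\mathcal{B} \psdle \tfrac{1}{p}\abs{\mathcal{A}}^p + \tfrac{1}{q}\abs{\mathcal{B}}^q$.

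The only step requiring care is the simultaneous diagonalization, which rests on finite-dimensionality (so each eigenspace of $\mathcal{A}$ is finite-dimensional and $\mathcal{B}$ restricts to a self-adjoint operator on it) together with commutativity (so the eigenspaces of $\mathcal{A}$ are $\mathcal{B}$-invariant).  I do not expect this to pose a real obstacle, since it is a standard fact of linear algebra; once it is in hand, the remainder of the argument is a routine term-by-term application of~\eqref{eqn:young-scalar}.
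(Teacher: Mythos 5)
Your proof is correct and follows essentially the same route as the paper: both simultaneously diagonalize the commuting self-adjoint operators $\mathcal{A}$ and $\mathcal{B}$ (you via an orthonormal eigenbasis and quadratic forms, the paper via a unitary conjugation to diagonal form) and then apply the scalar Young inequality~\eqref{eqn:young-scalar} to the paired eigenvalues. The two presentations are equivalent.
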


\begin{proof}
Since $\mathcal{A}$ and $\mathcal{B}$ commute,
there exists a unitary operator $\mathcal{U}$
and diagonal operators $\mathcal{D}$ and $\mathcal{M}$ for which
$\mathcal{A} = \mathcal{U}\mathcal{D}\mathcal{U}^*$ and
$\mathcal{B} = \mathcal{U}\mathcal{M}\mathcal{U}^*$.
Young's inequality~\eqref{eqn:young-scalar} for scalars
immediately implies that
$$
\mathcal{D}\mathcal{M} \psdle
	\frac{1}{p}\abs{\mathcal{D}}^p + \frac{1}{q}\abs{\mathcal{M}}^q.
$$
Conjugating both sides of this inequality by $\mathcal{U}$, we obtain
$$
\mathcal{A}\mathcal{B}
	= \mathcal{U}(\mathcal{D}\mathcal{M})\mathcal{U}^* 
	\psdle \frac{1}{p}\mathcal{U} \abs{\mathcal{D}}^p \mathcal{U}^*
	+ \frac{1}{q}\mathcal{U}\abs{\mathcal{M}}^q\mathcal{U}^*
	= \frac{1}{p}\abs{\mathcal{A}}^p + \frac{1}{q}\abs{\mathcal{B}}^q.
$$
The last identity follows from the definition of a standard function of an
operator.
\end{proof}

\subsection{An Operator Version of Cauchy--Schwarz}

We also need a simple version of the Cauchy--Schwarz inequality
for operators.  The proof follows a classical argument, but it
also involves an operator decomposition.

\begin{lemma}[Operator Cauchy--Schwarz] \label{lem:operator-cs}
Let $\mathcal{A}$ be a self-adjoint linear operator on the Hilbert space $\M^d$,
and let $\mtx{M}$ and $\mtx{N}$ be matrices in $\M^d$.  Then
$$
\absip{ \mtx{M} }{ \mathcal{A}(\mtx{N}) }
	\leq \big[ \ip{ \mtx{M} }{ \abs{\mathcal{A}}(\mtx{M}) } \cdot
	\ip{ \mtx{N} }{ \abs{\mathcal{A}}(\mtx{N}) } \big]^{1/2}.
$$
The inner product symbol refers to the trace, or Frobenius, inner product.
\end{lemma}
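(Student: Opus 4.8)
The plan is to run the classical proof of the Cauchy--Schwarz inequality, but to first route the self-adjoint operator $\mathcal{A}$ through its absolute value $\abs{\mathcal{A}} = (\mathcal{A}^2)^{1/2}$. The key enabling fact is a polar-type factorization $\mathcal{A} = \abs{\mathcal{A}}^{1/2}\,\mathcal{S}\,\abs{\mathcal{A}}^{1/2}$, where $\mathcal{S}$ is a self-adjoint unitary operator on $\M^d$ that commutes with every function of $\mathcal{A}$.

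First I would obtain this factorization from the spectral theorem for self-adjoint operators on the finite-dimensional complex Hilbert space $\M^d$: writing $\mathcal{A} = \sum_k \lambda_k\,\mathcal{P}_k$ with real eigenvalues $\lambda_k$ and orthogonal eigenprojections $\mathcal{P}_k$, I set $\abs{\mathcal{A}}^{1/2} := \sum_k \abs{\lambda_k}^{1/2}\,\mathcal{P}_k$ and let $\mathcal{S}$ act as $+1$ on the nonnegative eigenspaces of $\mathcal{A}$ and as $-1$ on the negative ones. Then $\mathcal{S}$ is unitary, it commutes with $\abs{\mathcal{A}}^{1/2}$, and $\abs{\mathcal{A}}^{1/2}\,\mathcal{S}\,\abs{\mathcal{A}}^{1/2} = \sum_k \lambda_k\,\mathcal{P}_k = \mathcal{A}$.

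Next I would use self-adjointness of $\abs{\mathcal{A}}^{1/2}$ to move one factor across the trace inner product, so that $\ip{\mtx{M}}{\mathcal{A}(\mtx{N})} = \ip{\abs{\mathcal{A}}^{1/2}(\mtx{M})}{\,\mathcal{S}\,\abs{\mathcal{A}}^{1/2}(\mtx{N})}$. Applying the ordinary Cauchy--Schwarz inequality in the Hilbert space $\M^d$, and then discarding $\mathcal{S}$ from the second factor (it is an isometry), I get $\absip{\mtx{M}}{\mathcal{A}(\mtx{N})} \le \fnorm{\abs{\mathcal{A}}^{1/2}(\mtx{M})}\cdot\fnorm{\abs{\mathcal{A}}^{1/2}(\mtx{N})}$. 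Finally, using self-adjointness of $\abs{\mathcal{A}}^{1/2}$ once more together with $\abs{\mathcal{A}}^{1/2}\abs{\mathcal{A}}^{1/2} = \abs{\mathcal{A}}$, I identify $\fnormsq{\abs{\mathcal{A}}^{1/2}(\mtx{M})} = \ip{\mtx{M}}{\abs{\mathcal{A}}(\mtx{M})}$ and likewise with $\mtx{N}$ in place of $\mtx{M}$ (both quantities are nonnegative since $\abs{\mathcal{A}}$ is positive), and substituting these into the previous display gives the claimed bound.

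The only point requiring any care is the factorization in the first step --- essentially a polar decomposition of the self-adjoint $\mathcal{A}$ in which the unitary factor is chosen to commute with $\abs{\mathcal{A}}^{1/2}$. In finite dimensions this falls straight out of simultaneous diagonalization, so I do not anticipate a genuine obstacle; the argument is really just bookkeeping around the spectral decomposition of $\mathcal{A}$.
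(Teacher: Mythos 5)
Your proof is correct, but it takes a genuinely different route from the paper's. The paper uses the Jordan decomposition $\mathcal{A} = \mathcal{A}_+ - \mathcal{A}_-$, expands the two nonnegative quadratic forms $\ip{s\mtx{M} \mp s^{-1}\mtx{N}}{\mathcal{A}_\pm(s\mtx{M}\mp s^{-1}\mtx{N})} \ge 0$, adds them to obtain $2\ip{\mtx{M}}{\mathcal{A}(\mtx{N})} \le s^2\ip{\mtx{M}}{\abs{\mathcal{A}}(\mtx{M})} + s^{-2}\ip{\mtx{N}}{\abs{\mathcal{A}}(\mtx{N})}$, and then optimizes over $s>0$. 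You instead build the factorization $\mathcal{A} = \abs{\mathcal{A}}^{1/2}\,\mathcal{S}\,\abs{\mathcal{A}}^{1/2}$ with a commuting sign isometry $\mathcal{S}$ from the spectral decomposition, move one factor of $\abs{\mathcal{A}}^{1/2}$ across the inner product, and apply ordinary Cauchy--Schwarz in $\M^d$, discarding the isometry $\mathcal{S}$. Both arguments are sound; yours is shorter and invokes the classical Cauchy--Schwarz inequality directly, while the paper's avoids constructing operator square roots and stays at the level of quadratic-form positivity, a parametrization style it reuses throughout the accompanying trace inequalities (Lemmas~\ref{lem:emvti} and~\ref{lem:pmvti}). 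One minor observation: the paper's displayed intermediate inequality~\eqref{eqn:cs-halfway} has a typo (it writes $\ip{\mtx{M}}{\abs{\mathcal{A}}(\mtx{N})}$ where $\ip{\mtx{N}}{\abs{\mathcal{A}}(\mtx{N})}$ is meant); your derivation arrives cleanly at the intended bound.
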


\begin{proof}
Consider the Jordan decomposition $\mathcal{A} = \mathcal{A}_{+} - \mathcal{A}_{-}$,
where $\mathcal{A}_+$ and $\mathcal{A}_{-}$ are both positive semidefinite.  For all $s > 0$,
\begin{align*}
0 &\leq \ip{(s \mtx{M} - s^{-1} \mtx{N}) }{ \mathcal{A}_+(s \mtx{M} - s^{-1}\mtx{N}) } \\
	&= s^2 \ip{ \mtx{M} }{\mathcal{A}_+(\mtx{M}) }
	+ s^{-2} \ip{ \mtx{N} }{ \mathcal{A}_+(\mtx{N}) }
	- 2 \ip{ \mtx{M} }{ \mathcal{A}_+( \mtx{N} ) }.
\end{align*}
Likewise,
\begin{align*}
0 &\leq \ip{(s \mtx{M} + s^{-1} \mtx{N}) }{ \mathcal{A}_{-}(s \mtx{M} + s^{-1}\mtx{N}) } \\
	&= s^2 \ip{ \mtx{M} }{\mathcal{A}_{-}(\mtx{M}) }
	+ s^{-2} \ip{ \mtx{N} }{ \mathcal{A}_{-}(\mtx{N}) }
	+ 2 \ip{ \mtx{M} }{ \mathcal{A}_{-}( \mtx{N} ) }. 
\end{align*}
Add the latter two inequalities and rearrange the terms to obtain
$$
2 \ip{ \mtx{M} }{ \mathcal{A}(\mtx{N}) }
	\leq s^2 \ip{ \mtx{M} }{ \abs{\mathcal{A}}(\mtx{M}) }
	+ s^{-2} \ip{ \mtx{N} }{ \abs{\mathcal{A}}(\mtx{N}) },
$$
where we have used the relation $\abs{\mathcal{A}} = \mathcal{A}_+ + \mathcal{A}_-$.  Take the infimum of the right-hand side over $s > 0$ to reach
\begin{equation} \label{eqn:cs-halfway}
\ip{ \mtx{M} }{ \mathcal{A}( \mtx{N} ) }
	\leq \big[ \ip{ \mtx{M} }{\abs{\mathcal{A}}(\mtx{M}) } \cdot 
	\ip{ \mtx{M} }{ \abs{\mathcal{A}}( \mtx{N} ) } \big]^{1/2}.
\end{equation}
Repeat the same argument, interchanging the roles of the matrices
$s \mtx{M} - s^{-1} \mtx{N}$ and $s \mtx{M} + s^{-1} \mtx{N}$.
We conclude that~\eqref{eqn:cs-halfway} also holds with an absolute value on the
left-hand side.  This observation completes the proof.
\end{proof}

\section{Proof of the Exponential Tail Bound} \label{sec:proof-concentration-bdd}

This appendix contains a proof of the exponential tail bound \thmref{concentration-bdd}.  The argument parallels the approach developed in~\cite{MackeyJoChFaTr12}, but we require more powerful estimates along the way.  In view of the similarities, we emphasize the places where the proofs differ, and we suppress details that are identical with the earlier work.

\subsection{The Matrix Laplace Transform Method} \label{sec:matrix-laplace}

A central tool in our investigation is a matrix variant of the classical
moment generating function.  Ahlswede \& Winter~\cite[App.]{AW02:Strong-Converse} introduced this definition in their investigation of matrix concentration.

\begin{defn}[Trace Mgf]
Let $\mtx{X}$ be a random Hermitian matrix.  The
\term{(normalized) trace moment generating function} of $\mtx{X}$ is defined as
$$
m(\theta) \defby m_{\mtx{X}}(\theta) \defby \Expect \ntr \econst^{\theta \mtx{X}}
\quad\text{for $\theta \in \R$.}
$$
\end{defn}

The following proposition from~\cite[Prop.~3.3]{MackeyJoChFaTr12} collects results from~\cite{AW02:Strong-Converse,Oli10:Sums-Random,Tro11:User-Friendly-FOCM,CGT11:Masked-Sample}.

\begin{prop} [Matrix Laplace Transform Method] \label{prop:matrix-laplace}
Let $\mtx{X} \in \Sym{d}$ be a random matrix with normalized trace mgf
$m(\theta) \defby \Expect \ntr \econst^{\theta \mtx{X}}$.  For each $t \in \R$, 
\begin{align} 
\Prob{\lambda_{\max} ( \mtx{X}) \geq t}
	&\leq d \cdot \inf_{\theta > 0} \ \exp\{ -\theta t + \log m(\theta) \}.
	\label{eqn:laplace-upper-tail} \\
\Prob{\lambda_{\min}(\mtx{X}) \leq t}
	&\leq d \cdot \inf_{\theta < 0} \ \exp\{ -\theta t + \log m(\theta) \}.
	\label{eqn:laplace-lower-tail}
\end{align}
Furthermore,
\begin{align}
\Expect \lambda_{\max}(\mtx{X})
	\leq \inf_{\theta > 0} \ \frac{1}{\theta}\, [\log d + \log m(\theta)].  \label{eqn:laplace-upper-mean} \\
\Expect \lambda_{\min}(\mtx{X})
	\geq \sup_{\theta < 0} \ \frac{1}{\theta}\, [\log d + \log m(\theta)].  \label{eqn:laplace-lower-mean}
\end{align}
\end{prop}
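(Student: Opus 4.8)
The plan is to follow the matrix analogue of the classical Cram\'er--Chernoff recipe: exponentiate, apply Markov's inequality, and pass from the maximum eigenvalue to the trace. The one structural fact I would isolate first is the \emph{spectral mapping principle}: if $g$ is a nondecreasing function on $\R$ and $g(\mtx{X})$ denotes the associated standard matrix function, then $\lambda_{\max}(g(\mtx{X})) = g(\lambda_{\max}(\mtx{X}))$, since the eigenvalues of $g(\mtx{X})$ are exactly the images under $g$ of the eigenvalues of $\mtx{X}$. Applied with $g(x) = \econst^{\theta x}$ for fixed $\theta > 0$, this gives the event identity $\{\lambda_{\max}(\mtx{X}) \geq t\} = \{\lambda_{\max}(\econst^{\theta\mtx{X}}) \geq \econst^{\theta t}\}$. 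The second structural fact is that $\lambda_{\max}(\mtx{Y}) \leq \trace \mtx{Y}$ whenever $\mtx{Y} \psdgt \zeromtx$, because the trace is the sum of the (nonnegative) eigenvalues; here $\mtx{Y} = \econst^{\theta\mtx{X}}$.

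Granting these, I would prove \eqref{eqn:laplace-upper-tail} by applying Markov's inequality to the nonnegative random variable $\lambda_{\max}(\econst^{\theta\mtx{X}})$:
\begin{align*}
\Prob{\lambda_{\max}(\mtx{X}) \geq t}
&= \Prob{\lambda_{\max}(\econst^{\theta\mtx{X}}) \geq \econst^{\theta t}}
\leq \econst^{-\theta t}\, \Expect \lambda_{\max}(\econst^{\theta\mtx{X}}) \\
&\leq \econst^{-\theta t}\, \Expect \trace \econst^{\theta\mtx{X}}
= d\, \econst^{-\theta t}\, m(\theta),
\end{align*}
where the last step uses $\trace = d \cdot \ntr$. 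Rewriting $d\,\econst^{-\theta t}m(\theta) = d\exp\{-\theta t + \log m(\theta)\}$ and taking the infimum over $\theta > 0$ gives the claim. For \eqref{eqn:laplace-lower-tail}, I would apply the bound just obtained to the random matrix $-\mtx{X}$, using $\lambda_{\min}(\mtx{X}) = -\lambda_{\max}(-\mtx{X})$ and $m_{-\mtx{X}}(\theta) = m_{\mtx{X}}(-\theta)$, then substitute $\theta \mapsto -\theta$ to convert $\inf_{\theta > 0}$ into $\inf_{\theta < 0}$.

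For the expectation bounds I would replace Markov's inequality with Jensen's inequality. For $\theta > 0$, convexity of $x \mapsto \econst^{\theta x}$ together with the two structural facts gives
\begin{align*}
\econst^{\theta\, \Expect\lambda_{\max}(\mtx{X})}
&\leq \Expect \econst^{\theta\lambda_{\max}(\mtx{X})}
= \Expect \lambda_{\max}(\econst^{\theta\mtx{X}}) \\
&\leq \Expect \trace \econst^{\theta\mtx{X}}
= d\, m(\theta).
\end{align*}
Taking logarithms and dividing by $\theta > 0$ yields $\Expect\lambda_{\max}(\mtx{X}) \leq \theta^{-1}[\log d + \log m(\theta)]$; the infimum over $\theta > 0$ finishes \eqref{eqn:laplace-upper-mean}, and \eqref{eqn:laplace-lower-mean} follows by the same reduction to $-\mtx{X}$. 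There is no genuine obstacle here --- this is the standard matrix Laplace transform template of Ahlswede and Winter. The only points that need care are purely bookkeeping: the measurability of $\lambda_{\max}(\mtx{X})$ as a (continuous) function of the random matrix, so that Markov and Jensen apply, and the observation that one may assume $m(\theta) < \infty$ for the relevant $\theta$, since otherwise the asserted bounds are vacuous; finiteness of $m(\theta)$ in turn forces $\Expect[\lambda_{\max}(\mtx{X})^+] < \infty$, so $\Expect\lambda_{\max}(\mtx{X})$ is well defined in $[-\infty, \infty)$ and the expectation inequalities make sense.
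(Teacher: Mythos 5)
Your proof is correct and is exactly the standard matrix Laplace transform argument (spectral mapping for the monotone map $x\mapsto\econst^{\theta x}$, the bound $\lambda_{\max}\leq\trace$ for positive matrices, then Markov for the tails and Jensen for the means, with the $\lambda_{\min}$ statements obtained by passing to $-\mtx{X}$). The paper itself does not reprove this proposition --- it simply imports it as \cite[Prop.\ 3.3]{MackeyJoChFaTr12}, which rests on the Ahlswede--Winter/Oliveira/Tropp/Chen--Gittens--Tropp arguments you have reproduced, so there is nothing to compare beyond noting that your route is the same as the cited one.
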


\noindent
In summary, we can bound the extreme eigenvalues of a random matrix
by controlling the trace mgf.

\subsection{The Method of Exchangeable Pairs} \label{sec:method-exchange}

The main technical challenge in developing concentration inequalities is to
obtain bounds for the trace mgf.  In this work, we follow the approach from
the paper~\cite{MackeyJoChFaTr12}, which extends Chatterjee's concentration
argument~\cite{Cha07:Steins-Method} to the matrix setting.  The key idea is
to use an exchangeable pair to bound the derivative of the trace mgf, which
in turns allows us to control the growth of the trace mgf.

We begin with a technical lemma, which generalizes \cite[Lem.~2.3]{MackeyJoChFaTr12} and \cite[Lem.~3.1]{Cha08:Concentration-Inequalities}.
This result permits us to rewrite certain matrix expectations
using kernel Stein pairs.

\begin{lemma}[Method of Exchangeable Pairs] \label{lem:exchange}
Suppose that $(\mtx{X}, \mtx{X}') \in \Sym{d} \times \Sym{d}$ is a $\mtx{K}$-Stein pair constructed from an auxiliary exchangeable pair $(Z,Z')$.
Let $\mtx{F}: \Sym{d} \rightarrow \Sym{d}$ be a measurable function that satisfies the regularity condition
\begin{equation} \label{eqn:regularity-mep}
\Expect \norm{ \mtx{K}(Z,Z') \cdot \mtx{F}(\mtx{X})  } < \infty.
\end{equation}
Then
\begin{equation}  \label{eqn:exchange}
 \Expect \left[ \mtx{X} \cdot \mtx{F}(\mtx{X}) \right]
 	= \frac{1}{2} \Expect \left[ \mtx{K}(Z,Z')(\mtx{F}(\mtx{X}) - \mtx{F}(\mtx{X}') ) \right].
\end{equation}
\end{lemma}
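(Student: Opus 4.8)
The identity \eqref{eqn:exchange} should follow by unwinding the two defining properties of a $\mtx{K}$-Stein pair in \eqref{eqn:K-stein-pair} together with the exchangeability of $(Z,Z')$. The plan is to start from the right-hand side of \eqref{eqn:exchange} and massage it into the left-hand side. First I would write $\mtx{K} \defby \mtx{K}(Z,Z')$ and use antisymmetry $\mtx{K}(Z',Z) = -\mtx{K}$, noting that $(\mtx{X},\mtx{X}') = (\mtx{\Psi}(Z),\mtx{\Psi}(Z'))$ so that swapping $Z \leftrightarrow Z'$ swaps $\mtx{X} \leftrightarrow \mtx{X}'$ and sends $\mtx{K} \mapsto -\mtx{K}$. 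Applying the exchangeability of $(Z,Z')$ to the function $(z,z') \mapsto \mtx{K}(z,z')\mtx{F}(\mtx{\Psi}(z))$ gives
\[
\Expect\left[\mtx{K}\,\mtx{F}(\mtx{X})\right] = \Expect\left[\mtx{K}(Z',Z)\,\mtx{F}(\mtx{\Psi}(Z'))\right] = -\Expect\left[\mtx{K}\,\mtx{F}(\mtx{X}')\right],
\]
provided the relevant expectations are finite. Hence
\[
\Expect\left[\mtx{K}(\mtx{F}(\mtx{X}) - \mtx{F}(\mtx{X}'))\right] = 2\,\Expect\left[\mtx{K}\,\mtx{F}(\mtx{X})\right].
\]

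**Conditioning step.** It then remains to show $\Expect[\mtx{K}\,\mtx{F}(\mtx{X})] = \Expect[\mtx{X}\,\mtx{F}(\mtx{X})]$. Since $\mtx{X} = \mtx{\Psi}(Z)$ is $\sigma(Z)$-measurable, so is $\mtx{F}(\mtx{X})$, and we can condition on $Z$: by the tower property and the pull-out property of conditional expectation,
\[
\Expect\left[\mtx{K}\,\mtx{F}(\mtx{X})\right] = \Expect\left[\Expect[\mtx{K}\condl Z]\,\mtx{F}(\mtx{X})\right] = \Expect\left[\mtx{X}\,\mtx{F}(\mtx{X})\right],
\]
where the last step invokes the second half of \eqref{eqn:K-stein-pair}, namely $\Expect[\mtx{K}(Z,Z')\condl Z] = \mtx{X}$ almost surely. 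Combining the two displays yields \eqref{eqn:exchange}.

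**Integrability bookkeeping.** The main thing to be careful about — though it is routine rather than deep — is justifying each manipulation under the integrability hypothesis \eqref{eqn:regularity-mep}. The assumption $\Expect\norm{\mtx{K}(Z,Z')\,\mtx{F}(\mtx{X})} < \infty$ ensures that $\Expect[\mtx{K}\,\mtx{F}(\mtx{X})]$ is well defined; exchangeability transfers this to $\Expect\norm{\mtx{K}(Z',Z)\,\mtx{F}(\mtx{\Psi}(Z'))} = \Expect\norm{\mtx{K}\,\mtx{F}(\mtx{X}')} < \infty$, so the difference $\Expect[\mtx{K}(\mtx{F}(\mtx{X}) - \mtx{F}(\mtx{X}'))]$ is also legitimate, matching the expression appearing on the right of \eqref{eqn:exchange}. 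For the conditioning step one uses that $\mtx{F}(\mtx{X})$ is bounded on each event where $\norm{\mtx{K}}$ is controlled, or more cleanly splits $\mtx{F}(\mtx{X})$ into its entrywise real and imaginary, positive and negative parts and applies the pull-out property to each nonnegative piece; the definitional standing assumption $\Expect\normsq{\mtx{X}} < \infty$ together with \eqref{eqn:regularity-mep} keeps everything finite. No trace inequalities or spectral arguments are needed here — this lemma is purely a consequence of the algebraic structure of a kernel Stein pair, and it is exactly the matrix analog of the scalar identities in \cite[Lem.~3.1]{Cha08:Concentration-Inequalities} and \cite[Lem.~2.3]{MackeyJoChFaTr12}.
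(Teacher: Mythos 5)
Your argument is correct and follows the same two steps as the paper's proof: use antisymmetry of $\mtx{K}$ together with exchangeability of $(Z,Z')$ to relate $\Expect[\mtx{K}\,\mtx{F}(\mtx{X})]$ and $\Expect[\mtx{K}\,\mtx{F}(\mtx{X}')]$, and use the conditional-expectation property $\Expect[\mtx{K}(Z,Z')\condl Z]=\mtx{X}$ with the pull-through rule to identify $\Expect[\mtx{K}\,\mtx{F}(\mtx{X})]$ with $\Expect[\mtx{X}\,\mtx{F}(\mtx{X})]$. The paper simply orders these steps the other way (starting from $\Expect[\mtx{X}\,\mtx{F}(\mtx{X})]$ and then averaging the two symmetric expressions), but the content and the integrability justification via \eqref{eqn:regularity-mep} are the same.
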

\begin{proof}
\defref{K-stein-pair}, of a kernel Stein pair, implies that
$$
\Expect[ \mtx{X} \cdot \mtx{F}(\mtx{X}) ]
	= \Expect \big[ \Expect[ \mtx{K}(Z,Z') \condl Z ] \cdot \mtx{F}(\mtx{X}) \big]
	= \Expect[ \mtx{K}(Z,Z') \, \mtx{F}(\mtx{X}) ],
$$
where we justify the pull-through property of conditional expectation
using the regularity condition~\eqref{eqn:regularity-mep}.
Since the kernel $\mtx{K}$ satisfies the antisymmetry property~\eqref{eqn:K-stein-pair},
we also have the relation
$$
\Expect[ \mtx{K}(Z,Z')\, \mtx{F}(\mtx{X}) ]
	= \Expect[\mtx{K}(Z',Z) \, \mtx{F}(\mtx{X}') ]
	= - \Expect[ \mtx{K}(Z,Z') \, \mtx{F}(\mtx{X}') ].
$$
Average the two preceding displays to reach the identity~\eqref{eqn:exchange}.
\end{proof}

Under suitable regularity conditions, the derivative of the trace mgf of a random matrix $\mtx{X}$ has precisely the form needed to invoke to the method of exchangeable pairs:
$$
m'(\theta) = \Expect \ntr \big[ \mtx{X} \econst^{\theta \mtx{X}} \big].
$$
Hence, we may apply \lemref{exchange} with $\mtx{F}(\mtx{X}) = \econst^{\theta \mtx{X}}$ to obtain the expression
\begin{equation} \label{eqn:m-prime-temp}
m'(\theta) = \frac{1}{2}
\Expect \ntr \big[ \mtx{K}(Z,Z') \big( \econst^{\theta \mtx{X}} - \econst^{\theta \mtx{X}'} \big) \big].
\end{equation}
The primary novelty in this work is a method for bounding the right-hand side of~\eqref{eqn:m-prime-temp}.

\subsection{The Exponential Mean Value Trace Inequality} \label{sec:emvti}

To control the expression~\eqref{eqn:m-prime-temp}
for the derivative of the trace mgf, we will invoke
Lemma~\ref{lem:emvti}, the exponential mean value trace inequality.
We establish this key lemma in this section.
See the manuscript \cite{Anote} for an alternative
proof.

\begin{proof}[Proof of Lemma~\ref{lem:emvti}] 
To begin, we develop an alternative expression for the trace
quantity that we need to bound.  Observe that 
$$
\frac{\diff{}}{\diff{\tau}} \econst^{\tau \mtx{A}} \econst^{(1-\tau)\mtx{B}}
	= \econst^{\tau \mtx{A}}(\mtx{A} - \mtx{B}) \econst^{(1-\tau)\mtx{B}}.
$$
The Fundamental Theorem of Calculus delivers the identity
$$
\econst^{\mtx{A}} - \econst^{\mtx{B}}
	= \int_0^1 \frac{\diff{}}{\diff{\tau}}
	\econst^{\tau \mtx{A}} \econst^{(1-\tau)\mtx{B}} \idiff{\tau}
	= \int_0^1 \econst^{\tau \mtx{A}}(\mtx{A} - \mtx{B}) \econst^{(1-\tau)\mtx{B}}
	\idiff{\tau}.
$$
Therefore, using the definition of the trace inner product, we reach
\begin{equation} \label{eqn:emvti-integral}
\trace \big[ \mtx{C} \big( \econst^{\mtx{A}} - \econst^{\mtx{B}} \big) \big]
	= \int_0^1 \ip{ \mtx{C} }{ \econst^{\tau \mtx{A}}(\mtx{A} - \mtx{B}) \econst^{(1-\tau)\mtx{B}} } \diff{\tau}.
\end{equation}
We can bound the right-hand side by developing an appropriate
matrix version of the inequality between the logarithmic
mean and the arithmetic mean.

Let us define two families of positive-definite
operators on the Hilbert space $\M^{d}$:
$$
\mathcal{A}_{\tau}(\mtx{M}) = \econst^{\tau \mtx{A}} \mtx{M}
\quad\text{and}\quad
\mathcal{B}_{1-\tau}(\mtx{M}) = \mtx{M} \econst^{(1-\tau) \mtx{B}}
\quad\text{for each $\tau \in [0,1]$.}
$$
In other words, $\mathcal{A}_{\tau}$ is a left-multiplication operator,
and $\mathcal{B}_{1-\tau}$ is a right-multiplication operator.  It follows
immediately that $\mathcal{A}_{\tau}$ and $\mathcal{B}_{1-\tau}$ commute
for each $\tau \in [0,1]$.
Young's inequality for commuting operators, Lemma~\ref{lem:young-commute},
implies that
\begin{equation*} 
\mathcal{A}_\tau\mathcal{B}_{1-\tau} 
	\psdle \tau \cdot \abs{\mathcal{A}_\tau}^{1/\tau} + (1-\tau) \cdot \abs{\mathcal{B}_{1-\tau}}^{1/(1-\tau)}
	=  \tau \cdot \abs{\mathcal{A}_1} + (1-\tau) \cdot \abs{\mathcal{B}_{1}}.
\end{equation*}
Integrating over $\tau$, we discover that
\begin{align}\label{eqn:young-exp-bound}
\int\nolimits_{0}^1\mathcal{A}_\tau\mathcal{B}_{1-\tau} d\tau
	&\psdle \frac{1}{2} (|\mathcal{A}_1| + |\mathcal{B}_{1}|)
	= \frac{1}{2} (\mathcal{A}_1 + \mathcal{B}_{1}).
\end{align}
This is our matrix extension of the logarithmic--arithmetic mean inequality.

To relate this result to the problem at hand, we rewrite
the expression~\eqref{eqn:emvti-integral} using the operators
$\mathcal{A}_{\tau}$ and $\mathcal{B}_{1-\tau}$.  Indeed,
\begin{multline}
\trace \big[ \mtx{C} \big( \econst^{\mtx{A}} - \econst^{\mtx{B}} \big) \big]
	= \int_0^1 \ip{ \mtx{C} }{ (\mathcal{A}_{\tau} \mathcal{B}_{1-\tau} )(\mtx{A} - \mtx{B}) } \diff{\tau} \\
	\leq \left[ \int_0^1 \ip{ \mtx{C} }{ (\mathcal{A}_{\tau} \mathcal{B}_{1-\tau} )(\mtx{C})} \diff{\tau} \cdot
	\int_0^1 \ip{ \mtx{A} - \mtx{B} }{ (\mathcal{A}_{\tau} \mathcal{B}_{1-\tau} )(\mtx{A} - \mtx{B})} \diff{\tau} \right]^{1/2}.
	\label{eqn:exp-bd-integrals}
\end{multline}
The second identity follows from the definition of the trace inner product.
The last relation follows from the operator Cauchy--Schwarz inequality, Lemma~\ref{lem:operator-cs}, and the usual Cauchy--Schwarz inequality for the integral.

It remains to bound the two integrals in~\eqref{eqn:exp-bd-integrals}.  These estimates are an immediate consequence of~\eqref{eqn:young-exp-bound}.  First,
\begin{multline} \label{eqn:exp-integral-1}
\int_0^1 \ip{ \mtx{C} }{ (\mathcal{A}_{\tau} \mathcal{B}_{1-\tau} )(\mtx{C})} \idiff{\tau}
	\leq \frac{1}{2} \ip{ \mtx{C} }{ (\mathcal{A}_1 + \mathcal{B}_1)(\mtx{C}) } \\
	= \frac{1}{2} \ip{ \mtx{C} }{ \econst^{\mtx{A}} \mtx{C} + \mtx{C} \econst^{\mtx{B}} }
	= \frac{1}{2} \trace\big[ \mtx{C}^2 \big( \econst^{\mtx{A}} + \econst^{\mtx{B}} \big) \big]. 
\end{multline}
The last two relations follow from the definitions of the operators $\mathcal{A}_1$ and $\mathcal{B}_1$, the definition of the trace inner product, and the cyclicity of the trace.  Likewise,
\begin{equation} \label{eqn:exp-integral-2}
\int_0^1 \ip{ \mtx{A} - \mtx{B} }{ (\mathcal{A}_{\tau} \mathcal{B}_{1-\tau} )(\mtx{A} - \mtx{B})} \diff{\tau}
= \frac{1}{2} \trace\big[ (\mtx{A}-\mtx{B})^2 \big( \econst^{\mtx{A}} + \econst^{\mtx{B}} \big) \big].
\end{equation}
Substitute~\eqref{eqn:exp-integral-1} and~\eqref{eqn:exp-integral-2} into the inequality~\eqref{eqn:exp-bd-integrals} to reach
$$
\trace \big[ \mtx{C} \big( \econst^{\mtx{A}} - \econst^{\mtx{B}} \big) \big]
	\leq \frac{1}{2} \bigg( \trace\big[ \mtx{C}^2 \big( \econst^{\mtx{A}} + \econst^{\mtx{B}} \big) \big] \cdot
	\trace\big[ (\mtx{A}-\mtx{B})^2 \big( \econst^{\mtx{A}} + \econst^{\mtx{B}} \big) \big] \bigg)^{1/2}.
$$
We obtain the result stated in Lemma~\ref{lem:emvti} by applying
the numerical inequality between the geometric mean and the
arithmetic mean.
\end{proof}

\subsection{Bounding the Derivative of the Trace Mgf} \label{sec:d-trace-mgf}

We are now prepared to obtain a bound for the derivative of the trace mgf
in terms of the conditional variance and the kernel conditional variance.

\begin{lemma}[The Derivative of the Trace Mgf] \label{lem:mgf-derivative}
Suppose that $(\mtx{X}, \mtx{X}') $ is a $\mtx{K}$-Stein pair,
and assume that $\mtx{X}$ is almost surely bounded in norm. 
Define the normalized trace mgf
$m(\theta) \defby \Expect \ntr \econst^{\theta \mtx{X}}$.
Then
\begin{align}
\abs{m'(\theta)}
	&\leq  \frac{1}{2} \abs{\theta} \cdot \inf_{s > 0}\ \Expect \ntr\big[\big(s\mtx{V}_{\mtx{X}} + s^{-1} \mtx{V}^{\mtx{K}} \big) \, \econst^{\theta \mtx{X}} \big]
	\quad\text{for all $\theta \in \R$.} \label{eqn:m-prime-Delta} 
\end{align}
The conditional variances $\mtx{V}_{\mtx{X}}$ and $\mtx{V}^{\mtx{K}}$ are defined in~\eqref{eqn:conditional-variance} and \eqref{eqn:K-conditional-variance}.
\end{lemma}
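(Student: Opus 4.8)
The plan is to combine the integral-free representation of $m'(\theta)$ recorded in~\eqref{eqn:m-prime-temp} with the exponential mean value trace inequality, \lemref{emvti}, and then symmetrize using the exchangeability of the underlying pair $(Z,Z')$. First I would justify differentiating $m(\theta) = \Expect\ntr\econst^{\theta\mtx{X}}$ under the expectation: since $\mtx{X}$ is almost surely bounded in norm, the integrand and its $\theta$-derivative are dominated on compact $\theta$-intervals, so $m'(\theta) = \Expect\ntr[\mtx{X}\econst^{\theta\mtx{X}}]$. One may assume $\Expect\normsq{\mtx{K}(Z,Z')} < \infty$, since otherwise the right-hand side of~\eqref{eqn:m-prime-Delta} is infinite (use that $\econst^{\theta\mtx{X}}$ is almost surely bounded below by a positive multiple of $\Id$ together with the definition~\eqref{eqn:K-conditional-variance} of $\mtx{V}^{\mtx{K}}$); in particular the regularity hypothesis~\eqref{eqn:regularity-mep} is met with $\mtx{F}(\mtx{X}) = \econst^{\theta\mtx{X}}$, so \lemref{exchange} applies and produces~\eqref{eqn:m-prime-temp}.

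Second, I would apply \lemref{emvti} pointwise, with the normalized trace in place of the trace, taking $\mtx{A} = \theta\mtx{X}$, $\mtx{B} = \theta\mtx{X}'$, and $\mtx{C} = \mtx{K}(Z,Z')$. Because $\mtx{A} - \mtx{B} = \theta(\mtx{X}-\mtx{X}')$, this bounds $\abs{\ntr[\mtx{K}(Z,Z')(\econst^{\theta\mtx{X}} - \econst^{\theta\mtx{X}'})]}$ by $\tfrac{1}{4}\ntr[(r\theta^2(\mtx{X}-\mtx{X}')^2 + r^{-1}\mtx{K}(Z,Z')^2)(\econst^{\theta\mtx{X}} + \econst^{\theta\mtx{X}'})]$ for every $r > 0$. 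Moving the absolute value inside the expectation in~\eqref{eqn:m-prime-temp} then gives
\[
\abs{m'(\theta)} \leq \frac{1}{8}\,\Expect\ntr\big[\big(r\theta^2(\mtx{X}-\mtx{X}')^2 + r^{-1}\mtx{K}(Z,Z')^2\big)\big(\econst^{\theta\mtx{X}} + \econst^{\theta\mtx{X}'}\big)\big].
\]

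Third comes the symmetrization. Writing $\mtx{X} = \mtx{\Psi}(Z)$ and $\mtx{X}' = \mtx{\Psi}(Z')$ and using $\Expect f(Z,Z') = \Expect f(Z',Z)$ for the exchangeable pair, I obtain $\Expect\ntr[(\mtx{X}-\mtx{X}')^2\econst^{\theta\mtx{X}'}] = \Expect\ntr[(\mtx{X}-\mtx{X}')^2\econst^{\theta\mtx{X}}]$, and the antisymmetry $\mtx{K}(Z',Z) = -\mtx{K}(Z,Z')$ gives the analogous identity with $\mtx{K}^2$. Hence $\econst^{\theta\mtx{X}} + \econst^{\theta\mtx{X}'}$ may be replaced by $2\econst^{\theta\mtx{X}}$. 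Since $\econst^{\theta\mtx{X}}$ is $\sigma(Z)$-measurable, the tower property together with the definitions~\eqref{eqn:conditional-variance} and~\eqref{eqn:K-conditional-variance} turns the surviving expectations into $2\Expect\ntr[\mtx{V}_{\mtx{X}}\econst^{\theta\mtx{X}}]$ and $2\Expect\ntr[\mtx{V}^{\mtx{K}}\econst^{\theta\mtx{X}}]$, so that $\abs{m'(\theta)} \leq \tfrac{1}{2}(r\theta^2\Expect\ntr[\mtx{V}_{\mtx{X}}\econst^{\theta\mtx{X}}] + r^{-1}\Expect\ntr[\mtx{V}^{\mtx{K}}\econst^{\theta\mtx{X}}])$. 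For $\theta \neq 0$ I would set $r = s/\abs{\theta}$ and take the infimum over $s > 0$, which is precisely~\eqref{eqn:m-prime-Delta}; the case $\theta = 0$ is trivial because $\Expect\mtx{X} = \Expect\mtx{K}(Z,Z') = \zeromtx$ by antisymmetry and exchangeability, so both sides vanish.

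The essential mathematical content is carried by \lemref{emvti}, which is already established, so what remains is bookkeeping. The only places that demand care are (i) the regularity conditions — differentiating under the expectation and verifying the hypothesis of \lemref{exchange}, which is exactly where the boundedness of $\mtx{X}$ is used — and (ii) the exchangeability symmetrization, which must be carried out so that both conditional variances emerge with the correct numerical constants and weighted by $\econst^{\theta\mtx{X}}$ rather than a symmetric mixture of $\econst^{\theta\mtx{X}}$ and $\econst^{\theta\mtx{X}'}$. I do not anticipate a genuine obstacle beyond these points.
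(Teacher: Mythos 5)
Your proposal is correct and follows essentially the same route as the paper: differentiate under the expectation using the almost sure boundedness of $\mtx{X}$, invoke \lemref{exchange} to reach~\eqref{eqn:m-prime-temp}, apply \lemref{emvti}, symmetrize via exchangeability, pull the conditional expectation through, and rescale the free parameter. The only (welcome) addition is your explicit observation that one may assume $\Expect\normsq{\mtx{K}(Z,Z')}<\infty$ without loss, since otherwise the right-hand side of~\eqref{eqn:m-prime-Delta} is infinite; the paper is more terse about the regularity hypothesis~\eqref{eqn:regularity-mep}.
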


\begin{proof}
Consider the derivative of the trace mgf
\begin{equation} \label{eqn:m-prime-v1}
m'(\theta)
	= \Expect \ntr \left[ \ddt{\theta} \, \econst^{\theta \mtx{X}} \right]
	= \Expect \ntr \big[ \mtx{X} \econst^{\theta \mtx{X}} \big],
\end{equation}
where the dominated convergence theorem and the boundedness of $\mtx{X}$
justify the exchange of expectation and derivative.
When $\theta = 0$, we have $m'(\theta) = 0$, as advertised.
When $\theta \neq 0$, the form of this derivative is ripe for an application of the method of exchangeable pairs, Lemma~\ref{lem:exchange}.
Since $\mtx{X}$ is bounded, the regularity condition~\eqref{eqn:regularity-mep} is satisfied, and we obtain
\begin{equation} \label{eqn:m-prime-v2}
m'(\theta) = \frac{1}{2}
	\Expect \ntr \big[ \mtx{K}(Z,Z') \big(\econst^{\theta \mtx{X}} - \econst^{\theta \mtx{X}'} \big) \big].
\end{equation}
The exponential mean value trace inequality, Lemma~\ref{lem:emvti}, implies that
\begin{align*}
|m'(\theta)|
&\leq \frac{1}{8} \cdot \inf_{s > 0}\ \Expect \ntr \big[
	\big(s \, (\theta\mtx{X}-\theta\mtx{X}')^2+s^{-1}\mtx{K}(Z,Z')^2 \big) \cdot \big(\econst^{\theta \mtx{X}} + \econst^{\theta \mtx{X}'} \big) \big] \\
&= \frac{1}{4} \cdot \inf_{s > 0}\ \Expect \ntr \big[
	\big(s \, (\theta\mtx{X}-\theta\mtx{X}')^2+s^{-1}\mtx{K}(Z,Z')^2 \big) \cdot \econst^{\theta \mtx{X}} \big] \\
&= \frac{1}{4} \abs{\theta} \cdot \inf_{t > 0}\ \Expect \ntr \big[
	\big(t \, (\mtx{X}-\mtx{X}')^2+t^{-1} \mtx{K}(Z,Z')^2 \big) \cdot \econst^{\theta \mtx{X}} \big] \\
&=  \frac{1}{2} \abs{\theta} \cdot \inf_{t > 0}\ \Expect \ntr \left[
	\frac{t}{2} \Expect\big[ (\mtx{X} - \mtx{X}')^2 \condl Z\big]
	\cdot \econst^{\theta \mtx{X}}  
	+ \frac{1}{2t} \Expect\big[ \mtx{K}(Z,Z')^2\condl Z\big] \cdot \econst^{\theta \mtx{X}}  \right]. %
\end{align*}
The first equality follows from the exchangeability of $(\mtx{X},\mtx{X}')$;
the second follows from the change of variables $s = \abs{\theta}^{-1} t$; 
and the final one depends on the pull-through property of conditional expectation.
We reach the result \eqref{eqn:m-prime-Delta} by introducing the definitions
\eqref{eqn:conditional-variance} and \eqref{eqn:K-conditional-variance} of the conditional variance and the kernel conditional variance.
\end{proof}

\subsection{Bounding the Trace Mgf} \label{sec:trace-mgf}

\lemref{mgf-derivative} gives us a powerful tool for bounding the trace mgf of a random matrix $\mtx{X}$ that is presented as part of a kernel Stein pair.  The following lemma shows how to derive a trace mgf bound from bounds on the kernel conditional variance.

\begin{lemma} [Trace Mgf Estimates for Bounded Random Matrices] \label{lem:mgf-bounds}
Let $(\mtx{X}, \mtx{X}')$ be a $\mtx{K}$-Stein pair, and suppose there exist nonnegative constants $c, v, s$ for which 
\begin{equation} \label{eqn:comparison2}
\mtx{V}_{\mtx{X}} \psdle s^{-1} (c \mtx{X} + v \, \Id)  
\quad\text{and}\quad
\mtx{V}^{\mtx{K}} \psdle s \, (c \mtx{X} + v \, \Id)
\quad \text{almost surely}.
\end{equation}
Then the normalized trace mgf $m(\theta) \defby \Expect \ntr \econst^{\theta \mtx{X}}$ satisfies the bounds
\begin{align}
\log m(\theta)
	&\leq \frac{v \theta^2}{2}  
	&&\text{when $\theta \leq 0$.}  \label{eqn:mgf-negative} \\
\log m(\theta)
	&\leq \frac{v}{c^2} \left[ \log\left( \frac{1}{1-c\theta}\right) - c\theta \right]
		\label{eqn:mgf-positive-1} \\
	&\leq \frac{v \theta^2}{ 2(1 - c \theta)}
	&&\text{when $0 \leq \theta < 1/c$.} \label{eqn:mgf-positive-2}
\end{align}
The two conditional variances are defined in
\eqref{eqn:conditional-variance} and \eqref{eqn:K-conditional-variance}.
\end{lemma}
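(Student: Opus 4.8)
The plan is to turn the pointwise bound on $m'(\theta)$ from \lemref{mgf-derivative} into a scalar differential inequality for $\log m(\theta)$ and then integrate. First I would apply \lemref{mgf-derivative} (whose boundedness hypothesis on $\mtx{X}$ is in force in the present setting) with the free parameter there chosen equal to the constant $s$ appearing in~\eqref{eqn:comparison2}. Since $s\,\mtx{V}_{\mtx{X}} + s^{-1}\,\mtx{V}^{\mtx{K}} \psdle 2\,(c\mtx{X} + v\,\Id)$ almost surely and $\econst^{\theta\mtx{X}} \psdgt \mtx{0}$, this yields
\begin{equation*}
\abs{m'(\theta)} \;\leq\; \abs{\theta}\cdot\Expect\ntr\big[(c\mtx{X}+v\,\Id)\,\econst^{\theta\mtx{X}}\big] \;=\; \abs{\theta}\,\big(c\,m'(\theta)+v\,m(\theta)\big),
\end{equation*}
where I used $m'(\theta)=\Expect\ntr[\mtx{X}\econst^{\theta\mtx{X}}]$ and $m(\theta)=\Expect\ntr\econst^{\theta\mtx{X}}$. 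Two elementary observations make the rest run smoothly: $\mtx{V}_{\mtx{X}}\psdge\mtx{0}$ together with~\eqref{eqn:comparison2} forces $c\mtx{X}+v\,\Id\psdge\mtx{0}$ almost surely, so the displayed trace expectation is nonnegative; and $m(\theta)>0$ for every $\theta$ because $\econst^{\theta\mtx{X}}\psdgt\mtx{0}$.

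For $0\leq\theta<1/c$ I would drop the absolute value on the left of the display to get $(1-c\theta)\,m'(\theta)\leq v\theta\,m(\theta)$; dividing by $(1-c\theta)\,m(\theta)>0$ gives $\ddt{\theta}\log m(\theta)\leq \frac{v\theta}{1-c\theta}$. Integrating from $0$ to $\theta$ with $\log m(0)=0$, and using the antiderivative $\int\frac{v u}{1-cu}\diff{u}=\frac{v}{c^{2}}\big(\log\frac{1}{1-cu}-cu\big)$, produces exactly~\eqref{eqn:mgf-positive-1}; the scalar estimate $\log\frac{1}{1-x}-x\leq\frac{x^{2}}{2(1-x)}$ on $[0,1)$ then yields~\eqref{eqn:mgf-positive-2}. (The degenerate case $c=0$ is recovered as the limit, or directly from $\ddt{\theta}\log m(\theta)\leq v\theta$.) For $\theta\leq 0$, writing $\abs{\theta}=-\theta$ turns the display into the reverse inequality $(1-c\theta)\,m'(\theta)\geq v\theta\,m(\theta)$, hence $\ddt{\theta}\log m(\theta)\geq\frac{v\theta}{1-c\theta}\geq v\theta$, where the last step uses $v\theta\leq 0$ and $1-c\theta\geq 1$. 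Integrating this from $\theta$ to $0$ gives $-\log m(\theta)\geq-\tfrac{v\theta^{2}}{2}$, which is~\eqref{eqn:mgf-negative}.

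The argument is a straightforward Gr\"onwall-type comparison, so I do not expect a serious obstacle; the step that requires care is the sign bookkeeping in the range $\theta\leq 0$. One must recognize that the ``$c\mtx{X}$'' term should be \emph{discarded} there rather than retained as on the upper tail — this is precisely what leaves the purely sub-Gaussian bound $v\theta^{2}/2$ intact for the lower tail — and one must track the reversal of the limits of integration when $\theta<0$. It is likewise worth verifying explicitly that $c\mtx{X}+v\,\Id\psdge\mtx{0}$ and $m(\theta)>0$, since these are what license passing from the operator inequality~\eqref{eqn:comparison2} to the scalar differential inequality. The final simplification to~\eqref{eqn:mgf-positive-2} is only the standard one-variable inequality and involves no new ideas.
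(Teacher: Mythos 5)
Your proposal is correct and follows essentially the same route as the paper: apply \lemref{mgf-derivative}, choose the free parameter in the infimum equal to the $s$ from~\eqref{eqn:comparison2} to obtain $\abs{m'(\theta)} \leq c\abs{\theta}\,m'(\theta) + v\abs{\theta}\,m(\theta)$, and then integrate the resulting differential inequality for $\log m$. The only difference is cosmetic: the paper delegates the final Gr\"onwall-type integration to the cited \cite[Lem.~4.3]{MackeyJoChFaTr12}, whereas you spell out the sign bookkeeping for $\theta\leq 0$ and the antiderivative calculation for $0\leq\theta<1/c$ explicitly, which is the content of that citation.
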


\begin{proof}
As demonstrated in \cite[Lem.~4.3]{MackeyJoChFaTr12}, the assumption \eqref{eqn:comparison2} implies that $\mtx{X}$ is almost surely bounded in norm.
Hence, we may apply \lemref{mgf-derivative} along with our conditional variance bounds \eqref{eqn:comparison2} to obtain
\begin{align*}
\abs{m'(\theta)}
	&\leq  \frac{1}{2} \abs{\theta} \cdot \inf_{t > 0}\ \Expect \ntr\big[(t\,\mtx{V}_{\mtx{X}} + t^{-1} \mtx{V}^{\mtx{K}}) \, \econst^{\theta \mtx{X}} \big] \\
	&\leq  \frac{1}{2} \abs{\theta} \cdot \Expect \ntr\big[(s \, \mtx{V}_{\mtx{X}} + s^{-1} \mtx{V}^{\mtx{K}}) \, \econst^{\theta \mtx{X}} \big] \\	
	&\leq \abs{\theta} \cdot \Expect \ntr\big[ (c \mtx{X} + v \, \Id) \, \econst^{\theta \mtx{X}} \big] \\
	&= c \abs{\theta} \cdot \Expect \ntr\big[ \mtx{X} \econst^{\theta \mtx{X}} \big]
		+ v \abs{\theta} \cdot \Expect \ntr \econst^{\theta \mtx{X}} \\
	&= c \abs{\theta} \cdot m'(\theta) + v \abs{\theta} \cdot m(\theta),  
\end{align*}
where the third inequality follows from the positivity of $\econst^{\theta \mtx{X}}$.  
The remainder of the argument now proceeds as in \cite[Lem.~4.3]{MackeyJoChFaTr12}.
\end{proof}

\subsection{Proof of Theorem~\ref{thm:concentration-bdd}}

The remainder of the proof of \thmref{concentration-bdd} is identical to that of \cite[Thm.~4.1]{MackeyJoChFaTr12}, once we substitute the trace mgf estimates from Lemma~\ref{lem:mgf-bounds} in place of the result \cite[Lem.~4.3]{MackeyJoChFaTr12}.  We omit the details.

\section{Proof of the Polynomial Moment Inequality} \label{sec:proof-bdg-inequality}

Next, we develop a proof of the matrix Burkholder--Davis--Gundy inequality,
Theorem~\ref{thm:bdg-inequality}.  The proof parallels the argument
in~\cite{MackeyJoChFaTr12}, but we need some new matrix inequalities
to make the extension to kernel Stein pairs.

\subsection{The Polynomial Mean Value Trace Inequality}

The critical new ingredient in Theorem~\ref{thm:bdg-inequality}
is the polynomial mean value trace inequality, Lemma~\ref{lem:pmvti}.
Let us proceed with a proof of this result.

\begin{proof}[Proof of Lemma~\ref{lem:pmvti}] 
First, we need to develop another representation for the trace
quantity that we are analyzing.  Assume that $\mtx{A}, \mtx{B}, \mtx{C} \in \Sym{d}$.  A direct calculation shows that
\begin{align*}
\mtx{A}^{q} - \mtx{B}^{q} 
	= \sum\nolimits_{k=0}^{q-1}\mtx{A}^{k} (\mtx{A} - \mtx{B}) \mtx{B}^{q-1-k}.
\end{align*}
As a consequence,
\begin{equation} \label{eqn:poly-trace-sum}
\trace \left[\mtx{C} (\mtx{A}^{q} - \mtx{B}^{q} )\right] 
	= \sum\nolimits_{k=0}^{q-1} \ip{ \mtx{C} }{ \mtx{A}^{k} (\mtx{A} - \mtx{B}) \mtx{B}^{q-1-k} }.
\end{equation}
To bound the right-hand side of~\eqref{eqn:poly-trace-sum}, we require an approriate mean inequality.

To that end, we define some self-adjoint operators on $\M^d$:
$$
\mathcal{A}_k(\mtx{M}) \defby \mtx{A}^k \mtx{M}
\qtext{and}
\mathcal{B}_{k}(\mtx{M}) \defby  \mtx{M} \mtx{B}^{k}
\quad\text{for each $k = 0, 1, 2, \dots, q-1$.} 
$$
The absolute values of these operators satisfy
$$
\abs{\mathcal{A}_k}(\mtx{M}) = \abs{\mtx{A}}^k \mtx{M}
\qtext{and}
\abs{\mathcal{B}_{k}}(\mtx{M}) =  \mtx{M} \abs{\mtx{B}}^{k}
\quad\text{for each $k = 0, 1, 2, \dots, q-1$.} 
$$
Note that $\abs{\mathcal{A}_k}$ and $\abs{\mathcal{B}_{q-k-1}}$ commute with each other
for each $k$.  Therefore, Young's inequality for commuting operators, Lemma~\ref{lem:young-commute}, yields the bound
\begin{align} 
\abs{\mathcal{A}_k\mathcal{B}_{q-k-1}}
	= \abs{\mathcal{A}_k} \abs{\mathcal{B}_{q-k-1}}
	&\psdle \frac{k}{q-1}\abs{\mathcal{A}_k}^{(q-1)/k}
	+ \frac{q-k-1}{q-1} \abs{\mathcal{B}_{q-k-1}}^{(q-1)/(q-k-1)} \notag \\
	&= \frac{k}{q-1} \abs{\mathcal{A}_1}^{q-1} + \frac{q-k-1}{q-1}\abs{\mathcal{B}_1}^{q-1}. \label{eqn:poly-mean-ineq}
\end{align}
Summing over $k$, we discover that
\begin{align}\label{eqn:young-bound}
\sum\nolimits_{k=0}^{q-1}\abs{\mathcal{A}_k\mathcal{B}_{q-k-1} }
	&\psdle \frac{q}{2} \abs{\mathcal{A}_1}^{q-1} + \frac{q}{2}\abs{\mathcal{B}_1}^{q-1}.
\end{align}
This is the mean inequality that we require.

To apply this result, we need to rewrite~\eqref{eqn:poly-trace-sum} using the operators $\mathcal{A}_k$ and $\mathcal{A}_{q-k-1}$.  It holds that
\begin{multline} \label{eqn:poly-sums}
\trace \left[\mtx{C} (\mtx{A}^{q} - \mtx{B}^{q} )\right]
	= \sum_{k=0}^{q-1} \ip{ \mtx{C} }{ (\mathcal{A}_k \mathcal{B}_{q-k-1})(\mtx{A}-\mtx{B}) } \\
	\leq \left[ \sum_{k=0}^{q-1} \ip{ \mtx{C} }{ \abs{\mathcal{A}_k \mathcal{B}_{q-k-1}}(\mtx{C}) } \cdot \sum_{k=0}^{q-1} \ip{ \mtx{A} - \mtx{B} }{  \abs{\mathcal{A}_k \mathcal{B}_{q-k-1}}(\mtx{A}-\mtx{B}) } \right]^{1/2}. 
\end{multline}
The second relation follows from the operator Cauchy--Schwarz inequality, Lemma~\ref{lem:operator-cs}, and the usual Cauchy--Schwarz inequality for the sum.

It remains to bound to two sums on the right-hand side of~\eqref{eqn:poly-sums}. 
The mean inequality~\eqref{eqn:poly-mean-ineq} ensures that
\begin{multline} \label{eqn:poly-sum-1}
\sum_{k=0}^{q-1} \ip{ \mtx{C} }{ \abs{\mathcal{A}_k \mathcal{B}_{q-k-1}}(\mtx{C}) }
	\leq \frac{q}{2} \ip{ \mtx{C} }{ \big(\abs{\mathcal{A}_1}^{q-1} + \abs{\mathcal{B}_1}^{q-1} \big)(\mtx{C}) } \\
	= \frac{q}{2} \ip{ \mtx{C} }{ \abs{\mtx{A}}^{q-1} \mtx{C} + \mtx{C} \abs{\mtx{B}}^{q-1} }
	= \frac{q}{2} \trace\big[ \mtx{C}^2 \big( \abs{\mtx{A}}^{q-1} + \abs{\mtx{B}}^{q-1} \big) \big].
\end{multline}
Likewise,
\begin{equation} \label{eqn:poly-sum-2}
\sum_{k=0}^{q-1} \ip{ \mtx{A} - \mtx{B} }{ \abs{\mathcal{A}_k \mathcal{B}_{q-k-1}}(\mtx{A}-\mtx{B}) }
	\leq \frac{q}{2} \trace\big[ (\mtx{A}-\mtx{B})^2 \big( \abs{\mtx{A}}^{q-1} + \abs{\mtx{B}}^{q-1} \big) \big].
\end{equation}
Introduce the two inequalities~\eqref{eqn:poly-sum-1} and~\eqref{eqn:poly-sum-2} into~\eqref{eqn:poly-sums} to reach
$$
\trace \left[\mtx{C} (\mtx{A}^{q} - \mtx{B}^{q} )\right]
	\leq \frac{q}{2} \bigg( \trace\big[ \mtx{C}^2 \big( \abs{\mtx{A}}^{q-1} + \abs{\mtx{B}}^{q-1} \big) \big] \cdot
	\trace\big[ (\mtx{A}-\mtx{B})^2 \big( \abs{\mtx{A}}^{q-1} + \abs{\mtx{B}}^{q-1} \big) \big] \bigg)^{1/2}.
$$
The result follows when we apply the numerical inequality between the geometric mean and the arithmetic mean.
\end{proof}

\subsection{Proof of Theorem~\ref{thm:bdg-inequality}}

Abbreviate
\begin{align*}
E \defby \Expect \pnorm{2p}{\mtx{X}}^{2p} = \Expect \trace \abs{\mtx{X}}^{2p} =  \Expect \trace \big[ \mtx{X}\cdot{\mtx{X}}^{2p-1}\big].
\end{align*}
To apply the method of exchangeable pairs, Lemma~\ref{lem:exchange}, we check the regularity condition~\eqref{eqn:regularity-mep}:
\begin{align*} 
\Expect \smnorm{}{ \mtx{K}(Z,Z') \cdot {\mtx{X}}^{2p-1} }
	&\leq \Expect \big( \norm{ \mtx{K}(Z,Z')}  \norm{ \mtx{X} }^{2p-1} \big) \\
	&\leq \big(\Expect \norm{ \mtx{K}(Z,Z')}^{2p} \big)^{1/2p} \big(\Expect \norm{ \mtx{X} }^{2p} \big)^{(2p-1)/2p} 
	< \infty,
\end{align*}
where we have applied \Holder's inequality for expectation and the fact that the spectral norm is dominated by the Schatten $(2p)$-norm.
Invoke Lemma~\ref{lem:exchange} with $\mtx{F}(\mtx{X}) = {\mtx{X}}^{2p - 1}$ to reach
\begin{align*}
E = \frac{1}{2} \Expect \trace\big[ \mtx{K}(Z,Z') \cdot
	\big( {\mtx{X}}^{2p-1} - {\mtx{X}'}^{2p-1} \big) \big].
\end{align*}
Next, fix a parameter $s > 0$.  Apply the polynomial mean value trace inequality, Lemma~\ref{lem:pmvti}, with $q=2p-1$ to obtain the estimate
\begin{align*}
E 
	&\leq \frac{2p-1}{8} \Expect \trace[(s \, (\mtx{X}-\mtx{X}')^2+s^{-1} \mtx{K}(Z,Z')^2) \cdot (\mtx{X}^{2p-2} + \mtx{X}'^{2p-2})] \\
	&= \frac{2p-1}{4} \Expect \trace[(s \, (\mtx{X}-\mtx{X}')^2+ s^{-1} \mtx{K}(Z,Z')^2)\cdot\mtx{X}^{2p-2}] \\
	&= (2p-1) \Expect \trace\left[\frac{1}{2}(s \, \mtx{V}_{\mtx{X}}+s^{-1} \mtx{V}^{\mtx{K}})\cdot \mtx{X}^{2p-2}\right],
\end{align*}
where we have used the exchangeability of $(\mtx{X},\mtx{X}')$
and the definitions~\eqref{eqn:conditional-variance} and \eqref{eqn:K-conditional-variance} of the conditional variances.
In the last step, we justify the pull-through property with the regularity condition $\Expect \pnorm{2p}{\mtx{X}}^{2p} < \infty$.
The remainder of the argument is identical with the proof of~\cite[Thm.~8.1]{MackeyJoChFaTr12}.

\section{Haar Measures and Controlled Total Variation} \label{sec:haar}
In this section, we prove \cororef{concentration-haar} by studying the behavior of Hermitian functions of group-valued random elements.
Under the notation of \cororef{concentration-haar}, we define $\mtx{X} \defby \mtx{\Psi}(Z)$.
\cite[Thm.~4.6]{Cha08:Concentration-Inequalities} showed that scalar functions of the Haar measure are well concentrated whenever particular random walks on $G$ converge rapidly to the Haar distribution.
In the sections to follow, we develop a Hermitian analogue of this relationship using the tools of \secsref{exchange}~and \secssref{concentration-bdd}.
As in \cite{Cha08:Concentration-Inequalities}, we will adopt the total variation distance between measures~\eqref{eqn:tv} as our convergence metric.

\subsection{A Kernel Coupling} \label{sec:haar-coupling}
We begin by establishing a kernel coupling suitable for analyzing $\mtx{X}$.
Since $Y\in G$ is independent of $Z$ and satisfies \eqref{eqn:Y-property}, 
$Z' = YZ$ is exchangeable counterpart for $Z$, and hence $(\mtx{X},\mtx{X}') = (\mtx{\Psi}(Z),\mtx{\Psi}(Z'))$ is an exchangeable pair.

Moreover, the sequence of pairs
\begin{align} \label{eqn:haar-coupling}
(Z_{(i)},Z_{(i)}') \defby (Y_i\cdots Y_1Z, Y_i\cdots Y_1Z') \qtext{for each } i\geq 0
\end{align} 
defines a kernel coupling for $(\mtx{X},\mtx{X}')$.
Thus, $(\mtx{X},\mtx{X}')$ is a kernel Stein pair with $\mtx{K}$ defined as in Lemma~\ref{lem:kernel-coupling}
whenever the precondition \eqref{eqn:coupling-premise} is met.

\subsection{The Conditional Variances}
The sequence of multipliers $(Y_i\cdots Y_1)_{i=1}^\infty$ in our kernel coupling~\eqref{eqn:haar-coupling} 
can be viewed as a random walk on the group $G$,
and, for many choices of $Y$, this sequence will converge to a Haar distributed random variable.
Intuitively, a faster rate of convergence implies a faster coupling time for the Markov chains $(Z_{(i)})_{i\geq 0}$ and $(Z'_{(i)})_{i\geq 0}$ and hence a smaller $\mtx{K}$-conditional variance \eqref{eqn:K-conditional-variance}.
Our next lemma makes this intuition more precise by bounding the $\mtx{K}$-conditional variance in terms of the 
total variation distance between $Y_i\cdots Y_1$ and $Z$.

\begin{lemma} \label{lem:total-variation}
Let $Z\sim \mu$ be Haar distributed on a group $G$.  Let $(\mtx{X},\mtx{X}') \defby (\mtx{\Psi}(Z),\mtx{\Psi}(Z'))$ 
with $\mtx{K}$ constructed as in \secref{haar-coupling}.
Suppose that $\mu_i$ is the distribution of ${Y}_i\cdots {Y}_1$ and that
\[
S^2 \defby \sup_{g \in G}\norm{\Expect[(\mtx{\Psi}(g) - \mtx{\Psi}(Yg))^2]}.
\]
Then $(\mtx{X},\mtx{X}')$ is a $\mtx{K}$-Stein pair whenever $\sum\nolimits_{i=0}^\infty\tv(\mu_i, \mu) < \infty$.
Moreover, the conditional variance \eqref{eqn:conditional-variance} satisfies
\begin{align*} 
\maxeig{\mtx{V}_{\mtx{X}} }
	\le \frac{S^2}{2} \quad\text{almost surely},
\end{align*}
and the $\mtx{K}$-conditional variance \eqref{eqn:K-conditional-variance} satisfies
\begin{align*} 
\maxeig{\mtx{V}^{\mtx{K}} }
	\le \frac{S^2}{2}\left(\sum\nolimits_{i=0}^\infty \min\big\{1,\ 4RS^{-1}\, \tv(\mu_i, \mu)\big\}\right)^2\quad\text{almost surely}.
\end{align*}
\end{lemma}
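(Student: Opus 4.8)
The plan is to verify the hypotheses of \lemref{kernel-coupling} and \lemref{conditional-variance-bound} for the kernel coupling $(Z_{(i)},Z'_{(i)})$ introduced in \secref{haar-coupling}. Write $W_i \defby Y_i\cdots Y_1$, so that $Z_{(i)} = W_i Z$ and $Z'_{(i)} = W_i Z' = W_i Y Z$, and set $\mtx{D}_{(i)} \defby \Expect[\mtx{\Psi}(Z_{(i)}) - \mtx{\Psi}(Z'_{(i)}) \condl Z,Z']$. Everything reduces to two complementary estimates for $\mtx{D}_{(i)}$, one of order $S$ and one of order $R\,\tv(\mu_i,\mu)$; keeping the smaller of the two at each index $i$ is what produces the factor $\min\{1,4RS^{-1}\tv(\mu_i,\mu)\}$ in the statement.

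For the $S$-bound, operator convexity of the square~\eqref{eqn:square-convex} (conditional Jensen) together with the tower property gives $\Expect[\mtx{D}_{(i)}^2 \condl Z] \psdle \Expect[(\mtx{\Psi}(Z_{(i)}) - \mtx{\Psi}(Z'_{(i)}))^2 \condl Z]$. Now $Z'_{(i)} = W_i Y Z = (W_i Y W_i^{-1})\,Z_{(i)}$, and since $W_i$ is independent of $Y$ while $zYz^{-1} =_d Y$ for every $z$, conditioning further on $W_i$ turns $\mtx{\Psi}(Z_{(i)}) - \mtx{\Psi}(Z'_{(i)})$ into $\mtx{\Psi}(g) - \mtx{\Psi}(\widehat{Y}g)$ with $g = W_i Z$ and $\widehat{Y} =_d Y$; the definition of $S^2$ then gives $\Expect[(\mtx{\Psi}(Z_{(i)}) - \mtx{\Psi}(Z'_{(i)}))^2 \condl Z] \psdle S^2 \Id$, hence $\Expect[\mtx{D}_{(i)}^2 \condl Z] \psdle S^2 \Id$. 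For the total-variation bound, I would use that $Z \sim \mu$ is Haar: right translation $T_z : w \mapsto wz$ preserves $\mu$, so it pushes $\mu_i$ forward to a measure $\nu_{i,z}$ with $\tv(\nu_{i,z},\mu) = \tv(\mu_i,\mu)$ for every $z$, while $\Expect[\mtx{\Psi}(W_i z)] = \int \mtx{\Psi}\,d\nu_{i,z}$ and $\int\mtx{\Psi}\,d\mu = \mtx{0}$. Writing $\mtx{D}_{(i)}$, for fixed $(Z,Z') = (z,z')$, as the difference of $\int\mtx{\Psi}\,d(\nu_{i,z}-\mu)$ and $\int\mtx{\Psi}\,d(\nu_{i,z'}-\mu)$ and applying $\norm{\mtx{\Psi}} \le R$ with the standard estimate $\norm{\int\mtx{\Psi}\,d(\nu-\mu)} \le 2R\,\tv(\nu,\mu)$, we obtain $\norm{\mtx{D}_{(i)}} \le 4R\,\tv(\mu_i,\mu)$, and therefore $\mtx{D}_{(i)}^2 \psdle (4R\,\tv(\mu_i,\mu))^2 \Id$ almost surely.

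Combining the two displays, hypothesis~\eqref{eqn:K-term-bound} of \lemref{conditional-variance-bound} holds with $\mtx{\Gamma}(Z) = S^2 \Id$ and $s_i \defby \min\{1,4RS^{-1}\tv(\mu_i,\mu)\}$; here $s_0 = 1$, since $\mu_0 = \delta_e$ and $S \le 2R \le 4R\,\tv(\mu_0,\mu)$. Moreover $\sum_{i\ge 0}\norm{\mtx{D}_{(i)}} \le 4R\sum_{i\ge 0}\tv(\mu_i,\mu) < \infty$ by assumption, so the precondition~\eqref{eqn:coupling-premise} of \lemref{kernel-coupling} is met and $(\mtx{X},\mtx{X}')$ is a $\mtx{K}$-Stein pair with the kernel of \secref{haar-coupling}. \lemref{conditional-variance-bound} then yields $\mtx{V}_{\mtx{X}} \psdle \tfrac12 s_0^2\,S^2\,\Id = \tfrac{S^2}{2}\Id$ and $\mtx{V}^{\mtx{K}} \psdle \tfrac{S^2}{2}\big(\sum_{i\ge 0}s_i\big)^2\Id$, which are exactly the asserted eigenvalue bounds.

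The main obstacle is the total-variation estimate on $\mtx{D}_{(i)}$: recognizing that the per-step error is controlled by $\tv(\mu_i,\mu)$ \emph{uniformly in the starting point}, which hinges on right translation by a fixed group element being a $\mu$-preserving bijection, so that the gap $\tv(\nu_{i,z},\mu)$ is independent of $z$ and equals $\tv(\mu_i,\mu)$. The remainder is bookkeeping with the two cited lemmas, together with the symmetry hypotheses~\eqref{eqn:Y-property} that legitimize the conditional-expectation reductions.
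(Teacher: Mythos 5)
Your proof is correct and follows essentially the same route as the paper: you derive the same two bounds on $\Expect[\mtx{D}_{(i)}^2 \condl Z]$ (one $\psdle S^2\Id$ via conditional Jensen and the conjugation/independence properties of $Y$, one $\psdle 16R^2\tv^2(\mu_i,\mu)\Id$ via $\Expect\mtx{\Psi}(Z)=\zeromtx$ and the Haar-invariance of $\mu$), take pointwise minima, and feed the result into Lemma~\ref{lem:kernel-coupling} and Lemma~\ref{lem:conditional-variance-bound}, differing from the paper only in rescaling ($\mtx{\Gamma}=S^2\Id$, $s_i = \min\{1, 4RS^{-1}\tv(\mu_i,\mu)\}$ versus the paper's $\mtx{\Gamma}=\Id$, $s_i=\min\{S, 4R\tv(\mu_i,\mu)\}$) and in phrasing the total-variation step via pushforwards $\nu_{i,z}$ rather than a common dominating measure. (The aside establishing $s_0=1$ is not even needed: $s_0 \le 1$ always suffices to conclude $\lambda_{\max}(\mtx{V}_{\mtx{X}}) \le S^2/2$.)
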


\begin{proof}
Fix any $i \geq 0$.
We aim to bound
\begin{align*}
\Expect[\mtx{\Psi}(Z_{(i)}) - \mtx{\Psi}(Z_{(i)}')\condl Z=z,Z'=z']^2
	&= (\Expect[\mtx{\Psi}(Y_i\cdots Y_1 z)] - \Expect[\mtx{\Psi}(Y_i\cdots Y_1 z')])^2 \\
	&\psdle 2 \Expect[\mtx{\Psi}(Y_i\cdots Y_1 z)]^2 +2\Expect[\mtx{\Psi}(Y_i\cdots Y_1 z')]^2,
\end{align*}
where the inequality follows from the convexity of the matrix square.
For any $z\in G$,
\begin{align*}
	{\Expect[\mtx{\Psi}(Y_i\cdots Y_1 z)] }
		&= {\Expect[\mtx{\Psi}(Y_i\cdots Y_1 z)] - \Expect[\mtx{\Psi}(Z)]} \\
		&= {\Expect[\mtx{\Psi}(Y_i\cdots Y_1 z)] - \Expect[\mtx{\Psi}(Zz)]},
\end{align*}
since $Z$ is Haar distributed, and hence $Zz =_d Z$.
Furthermore, for any positive measure $\nu$ that dominates $\mu$ and $\mu_i$,
\begin{align*}
	\norm{\Expect[\mtx{\Psi}(Y_i\cdots Y_1 z)] }
		&= \norm{\int \mtx{\Psi}(yz)\left(\frac{d\mu_i}{d\nu}(y) - \frac{d\mu}{d\nu}(y)\right)d\nu(y)} \\
		&\leq R\int \left|\frac{d\mu_i}{d\nu}(y) - \frac{d\mu}{d\nu}(y)\right|d\nu(y) \\
		&\leq 2R\ \tv(\mu_i, \mu),
\end{align*}
by our bound on $\mtx{\Psi}$ and the definition of total variation. 
Therefore,
\begin{align*}
\Expect[\Expect[\mtx{\Psi}(Z_{(i)}) - \mtx{\Psi}(Z_{(i)}')\condl Z,Z']^2\condl Z]
	&\psdle 16R^2 \tv^2(\mu_i, \mu)\ \Id.
\end{align*}
We note moreover that
\[
\norm{\sum\nolimits_{i=0}^\infty |\Expect[\mtx{\Psi}(Z_{(i)}) - \mtx{\Psi}(Z'_{(i)})\condl Z_{(0)}=z,Z'_{(0)}=z']|}
	\leq \sum\nolimits_{i=0}^\infty 4R\ \tv(\mu_i, \mu)
\]
for all $z$ and $z'$.  Hence, by \lemref{kernel-coupling}, $(\mtx{X},\mtx{X}')$ is a valid $\mtx{K}$-Stein pair 
whenever the total variation distances are summable.

Next, let $W_i \defby Y_i\cdots Y_1$, and notice that
\begin{align*}
\norm{\Expect[\Expect[\mtx{\Psi}(Z_{(i)}) - \mtx{\Psi}(Z_{(i)}')\condl Z,Z']^2\condl Z = z]}
	&\le \norm{\Expect[(\mtx{\Psi}(Z_{(i)}) - \mtx{\Psi}(Z_{(i)}'))^2\condl Z = z]} \\
	&= \norm{\Expect[(\mtx{\Psi}(W_iz) - \mtx{\Psi}(W_iYz))^2] } \\
	&\leq \sup_{g\in G}\norm{\Expect[(\mtx{\Psi}(gz) - \mtx{\Psi}(gYz))^2] } \\
	&= \sup_{g\in G}\norm{\Expect[(\mtx{\Psi}(gz) - \mtx{\Psi}(Ygz))^2] } \leq S^2,
\end{align*}
where the first inequality is a consequence of the convexity of the matrix square,
and the final equality follows from the property $g^{-1}Yg =_d Y$ for all $g\in G$.
Hence, we may apply Lemma~\ref{lem:conditional-variance-bound} with 
$s_0 = S$, with $s_i = \min\big\{S,\ 4R\, \tv(\mu_i, \mu)\big\}$ for $i > 0$, and with $\mtx{\Gamma}(Z) = \Id$
to obtain the result.
\end{proof}

\subsection{Exponential Concentration}
We are finally equipped to prove \cororef{concentration-haar}.
Under the kernel coupling construction of \secref{haar-coupling},
the conditional variance bounds of \lemref{total-variation} 
imply that \thmref{concentration-bdd} holds with $c=0$,
with $v = \tfrac{1}{2}S^2\sum\nolimits_{i=0}^\infty \min\big\{1,\,4RS^{-1}\, \tv(\mu_i, \mu)\big\}$, 
and with $s =\sum\nolimits_{i=0}^\infty \min\big\{1, \ 4RS^{-1} \, \tv(\mu_i, \mu)\big\}$.
This establishes the result.

\bibliographystyle{amsplain}
\bibliography{refs-stein_kernel.bib}

\end{document}